\definecolor{Light}{gray}{.90}
\newcommand{\cye}{\cellcolor{yellow!75}}
\newcolumntype{x}[1]{>{\raggedright\arraybackslash\hspace{0pt}}p{#1}}
\newtheorem{problem}{Problem}
\newtheorem{assumption}{Assumption}
\newtheorem{lemma}{Lemma}
\newtheorem{theorem}{Theorem}
\newtheorem{remark}{Remark}
\newcommand{\iter}[1]{\ensuremath{\langle #1 \rangle}}
\newcommand{\itrpt}[3]{\ensuremath{\bm{#1}^{\iter{#2}}_{#3}}}
\newcommand{\itrval}[3]{\ensuremath{#1^{\iter{#2}}_{#3}}}
\newcommand{\idxpt}[3]{\ensuremath{\bm{#1}^{(#2)}_{#3}}}
\newcommand{\idxval}[3]{\ensuremath{#1^{(#2)}_{#3}}}
\DeclareMathOperator*{\argmin}{arg\,min}
\DeclareMathOperator*{\argmax}{arg\,max}
\begin{document}
% \linenumbers

\begin{frontmatter}
\title{SMGO-$\Delta$: Balancing Caution and Reward\\ in Global Optimization  with Black-Box Constraints}

\author{Lorenzo Sabug Jr.\corref{corr}}\ead{lorenzojr.sabug@polimi.it}
\cortext[corr]{Corresponding author. The first author would like to acknowledge the support of the Department of Science and Technology--Science Education Institute (DOST-SEI) of the Philippines for his research.}
\author{Fredy Ruiz}\ead{fredy.ruiz@polimi.it}
\author{Lorenzo Fagiano\corref{ack2}}\ead{lorenzo.fagiano@polimi.it}
\cortext[ack2]{This research has been supported by the Italian Ministry of University and Research (MIUR) under the PRIN 2017 grant n. 201732RS94 "Systems of Tethered Multicopters".}
\address{Dipartimento di Elettronica, Informazione e Bioingegneria, Politecnico di Milano\\ Piazza Leonardo da Vinci 32, 20133 Milano, Italy}

\begin{abstract}
In numerous applications across all science and engineering areas, there are optimization problems where both the objective function and the constraints have no closed-form expression or are too complex to be managed analytically, that they can only be evaluated through experiments. To address such issues, we design a global optimization technique for problems with black-box objective and constraints. Assuming Lipschitz continuity of the cost and constraint functions, a Set Membership framework is adopted to build a surrogate model of the optimization program, that is used for exploitation and exploration routines. The resulting algorithm, named Set Membership Global Optimization With Black-Box Constraints (SMGO-$\Delta$), features one tunable risk parameter, which the user can intuitively adjust to trade-off safety, exploitation, and exploration. The theoretical properties of the algorithm are derived, and the optimization performance is compared with representative techniques from the literature in several benchmarks. Lastly, it is tested and compared with constrained Bayesian optimization in a case study pertaining to model predictive control tuning for a servomechanism with disturbances and plant uncertainties, addressing practically-motivated task-level constraints.
\end{abstract}
    
\end{frontmatter}

%%%%%%%%%%%%%%%%%%%%%%%%%%%%%%%%%%%%%%%%%%%%%%%%%%%%%%%%%%%%%%%%%%%%%%%%%%%%%%%%
\section{Introduction}
Black-box optimization problems arise in many practical scenarios, such as engineering design, management of complex processes, and control system tuning. A common aspect to such problems is that the objective function involves many interacting factors that are difficult to model mathematically in an accurate way. In these contexts, the objective function is evaluated using (possibly time-consuming) experiments. Another relevant case is when accurate mathematical models do exist, but they can not be easily treated analytically because of their high complexity: in these situations, one can usually rely only on simulations. Typically, these problems feature several local minima and they may also be affected by uncertainty/disturbances, so that the value of the cost may change within certain intervals for the same value of the decision variables. These aspects make the use of established first- or second-order nonlinear programming approaches impractical. Rather, global black-box optimization techniques aim to solve these problems using the results of the experiments/simulations to iteratively choose the value of the decision variable to be tested next, attempting to simultaneously learn the objective function and optimize it. Almost all such techniques try to balance exploitation and exploration, in order to ensure fast convergence and coverage of the search space, respectively. Examples of algorithms for black-box optimization in the unconstrained case are \cite{Jones1998,Gutmann2001,KHONG2013,Malherbe2017,GAO2018,Bemporad2020,Sabug2020CDC,Sabug2020,Urquhart2020}.

Many black-box optimization problems involve also the satisfaction of one or more constraints, deriving for example from the fulfillment of norms and standards, the achievement of a minimum performance threshold, or state constraints. In the literature, most black-box optimization approaches, including the references above, do consider constraint functions that are assumed to be known \textit{a priori} and either analytically or numerically tractable, such as upper and lower bounds on each decision variables, convex sets, or nonlinear but known and differentiable constraint functions. However, there are problems where the constraint values are not known \textit{a priori} or too complicated and uncertain as well, i.e., they  are also black box, leading to uncertain and possibly disconnected feasible sets. This paper focuses on this problem class. In particular, we consider a framework where sensible values of the objective and constraint functions are returned also when the test point is outside the feasible set, i.e., the black-box constraints can be violated during the optimization process. This is in contrast to non-relaxable problems, in which a violated constraint means a crashed simulation or a failed experiment, and the returned objective and/or constraint function values are undefined or not meaningful. Using the taxonomy of black-box constraints introduced in \cite{Digabel2015}, we deal with \texttt{QRSK} (Quantifiable, Relaxable, Simulation, Known) problems.

\subsection*{Previous work}

The interest on considering black-box constraints in black-box optimization is increasing, mainly in synchronous contexts, where the objective and all constraints are jointly evaluated at each sampling point. Efforts have also been made for different problem setups such as in multi-objective contexts \cite{Chen2012,Martinez-Frutos2016,Garrido-Merchan2019}, and in asynchronous evaluations (where the objective and/or constraints can be independently sampled at different test points) \cite{Garrido-Merchan2019,Gelbart2014,Ariafar2019}. However, such contexts are outside the scope of the present work, as we focus on scalar-objective, synchronous constrained optimization.

Swarm- or population-based approaches were formulated in previous works to tackle the problem we treat. For example, \cite{Garg2019} proposes the use of a penalty function, recasting the constrained problem to an unconstrained one, allowing the use of a hybrid between gravitational search and genetic algorithms. An approach proposed by \cite{Jiao2019} recasts the problem into a bi-objective one with the problem objective and overall constraint violation (defined as an average of the individual constraint violations). These population-based algorithms, however, use the sum (or a linear combination) of constraint violations as measure for infeasibility. This leads to high sensitivity w.r.t. constraint function image sets, i.e. a constraint can dominate others because it has a wider range of values.

Another popular class of algorithms is based on the Mesh Adaptive Direct Search (MADS) \cite{Audet2006,ledigabel2011,audet2021nomad}, used in applications as reported in \cite{audet2014,Alarie2021}. In general, the algorithm is composed of search (exploration) and poll (exploitation) phases, selecting evaluation points from a mesh emanating from the current best point, usually along the coordinate directions. Such a mesh becomes more refined as the iterations increase, until the evaluation budget is exhausted. It features flexibility in terms of the selection criteria for generating evaluation points, especially for the search phase. However, the mesh generation for the search phase is exponential w.r.t. dimensionality. Furthermore, the candidate points selection from a mesh limits the search directions that can be taken for evaluation.

% Most of the response surface-based techniques handling black-box constraints are based on Bayesian Optimization (BO). Early attempts to integrate black-box constraints with BO, like \cite{Gelbart2014}, require a feasible initial point, which may be limiting in some applications. An approach to the case of no feasible starting point is to allocate iterations to estimate the feasible region. For example, the approach of  deals with black-box constraints using support vector machines, in two phases. The first one estimates the feasible regions using separate iterations to evaluate the feasibility of different sampled points, which in effect is the ``training phase''. In the second phase, BO is used to optimize over the estimated feasible space. This method is sensible when the constraint functions are cheap to evaluate, because otherwise the use of separate iterations to learn the feasible region may significantly increase the experiment time before the actual optimization is attempted. 

%The proposal from \cite{Hernandez-Lobato2015}, on the other hand, uses a barrier-like method considering a sum of the objective and constraint values in the acquisition function, which removes the need for existence of a feasible solution. %Another approach \cite{Ariafar2019} employs the alternating direction method of multipliers (ADMM) in conjunction with BO, which recasts the constrained problem into an unconstrained formulation and introduces an ADMM-inherited stopping criterion. 

In the last decade, kriging-based methods \cite{Li2017,Boukouvala2014,Shi2019,Dong2021} and Bayesian optimization (BO) \cite{Shahriari2016,Zhan2020} grew in popularity. These techniques have a common base: fitting Gaussian process (GP) priors to existing objective and constraint data, and selecting a point that maximizes an acquisition function over the fitted prior. Earlier works addressed the modification of acquisition function formulations considering the objective and constraints/feasibility \cite{Gelbart2014,Boukouvala2014,Shi2019,Gramacy2011,Gardner2014,Jiang2021}. Some works like \cite{Boukouvala2014,Gardner2014} need feasible point/s in the initial data, but \cite{Gelbart2014} addressed this issue by focusing on feasible region search when no feasible samples exist yet. \cite{Antonio2019} proposed a similar approach, devoting the initial iterations to learn the feasible regions, while \cite{Jiang2021} proposed alternating between exploitation (inside the feasible region) and exploration (discovering the constraint boundaries). Other works \cite{Hernandez-Lobato2015,Hernandez-Lobato2016} leveraged in their sampling point selection the expected information gain w.r.t. existing sampled constrained minimal value. In later works, lookahead-based formulations \cite{Lam2017,Zhang2021} were investigated as well, arguing that one-step (or myopic/greedy) algorithms only consider immediate improvement of solution quality, and tend to ignore long-term information gains. A major limitation of such kriging- and Bayesian-based methods lies in the computationally-intensive GP fitting over the existing data, which only allows their use in low-dimensional problems (most previous literature demonstrated its use up to 5-6 dimensions only). Furthermore, optimizing the acquisition function over these ``cheap'' surrogate surfaces is another problem. Most techniques delegate the surrogate optimization to an external solver, which affects reproducibility of results. Furthermore, while they show empirically good performance, convergence properties are not investigated for the above-discussed methods.

A recently-developed class of constrained algorithms, as in \cite{Alimo2020,Beyhaghi2017,Alimo2021}, utilizes a Delaunay triangulation to quantify the uncertainty throughout the search space, given the existing sampled points. These methods feature flexibility in using any smooth interpolation for the surrogate modeling of the objective and constraints. In addition, their convergence is proven assuming twice-differentiable objective and constraint functions. However, the heavy computational burden of calculating/updating a Delaunay triangulation practically limits their use to low-dimension problems.

In summary, while existing methods of constrained black-box optimization have shown good performance, computational burden issues are still pertinent. This limits the practical use of black-box optimization, especially in higher-dimensional problems and in embedded hardware with limited computational facilities. The reproducibility of results -- a mostly-ignored aspect in previous literature -- is also highly relevant in industrial contexts, in which explainable and traceable history of experiments is desirable. A organized summary comparing the previous work is shown in Table~\ref{table:compare-prev-work}.

\begin{table*}[!t]
\small
    \begin{center}
    	\begin{tabular}{|p{0.85in}|p{2in}|p{2in}|}
    		\hline
    		\textbf{Type} & \textbf{Advantages} & \textbf{Disadvantages}\\ \hline
    		Swarm-based \cite{Garg2019,Jiao2019} & Simple implementation and concept of penalty function; computationally efficient; repeatable & Sensitive to value ranges of respective constraints; convergence not investigated \\ \hline
    		MADS \cite{Audet2006,ledigabel2011,audet2021nomad} & Simple design; high flexibility in global search criterions & Lack of search directions due to meshing, mesh generation is exponential w.r.t. dimensionality \\ \hline
    		Kriging- \cite{Li2017,Boukouvala2014,Shi2019,Dong2021} or BO-based \cite{Gelbart2014,Shahriari2016,Zhan2020,Gramacy2011,Gardner2014} & Iteration-efficient; wide implementation & Computationally heavy; practically limited to low dimensions; convergence not investigated; not completely repeatable \\ \hline
    		Delaunay-based \cite{Alimo2020,Beyhaghi2017,Alimo2021} & Iteration-efficient; flexible to different regression techniques; provably convergent & Computationally heavy; practically limited to low dimensions \\ \hline
    		SMGO-$\Delta$ (this work) & Iteration- and computationally efficient; simple implementation; provably convergent; repeatable; user-tunable exploration behavior & Not memory-efficient; not effective with very small feasible regions \\ \hline
    	\end{tabular}
        \caption{Comparison of representative methods for global optimization \\ with black-box constraints}
    	\label{table:compare-prev-work}
    \end{center}
\end{table*}
% The present paper proposes a new approach, named SMGO-$\Delta$, to deal with black-box constraints in optimization. Deepening the research line of our recent work \cite{Sabug2020CDC, Sabug2020}, we use the Set Membership framework~\cite{Milanese2004} to build estimated models of both the cost and the constraints. These approximations are then used to predict the fulfillment of the black-box constraints in the unsampled areas. Our design of the exploration routine allows the user to tune the behavior, ranging from ``cautious'' to ``risky''. Furthermore, we do not require an initial feasible point, and we automatically prioritize regions where more constraints are estimated as fulfilled, encouraging the search for the feasible region. In addition to presenting the algorithm, we investigate its theoretical properties and prove its convergence, and discuss practical implementation concerns w.r.t. computation and search directions. We also offer a performance and sensitivity test w.r.t. the main hyperparameters of the algorithm, and lastly, we test it on a practical problem of model-predictive controller (MPC) tuning for an industrial motor. Finally, we compare the performance of SMGO-$\Delta$ with that of a constrained Bayesian optimizer, the \textit{de facto} standard in black-box optimization in recent literature.

\subsection*{Contributions of this work}

The present paper proposes a new approach, the Set Membership Global Optimization With Black-Box Constraints (SMGO-$\Delta$), which is computationally efficient, provably convergent, reproducible, and competitive in iteration-based performance. Deepening the research line of our recent work \cite{Sabug2020CDC, Sabug2020} which only considered unconstrained cases, we use the Set Membership framework \cite{Milanese2004} to build estimated models of both the cost and the constraints. These approximations are then used to predict the fulfillment of the black-box constraints in the unsampled areas. Our contributions are as follows:

\begin{itemize}
    \item We present a computationally efficient estimation model for the objective and constraints, given existing data. The Set Membership (SM)-based model is geometrically based on hypercones, which quantify the uncertainty in a more tractable way than GP-based fitting, and Delaunay partitioning-based methods.
    \item The algorithm introduces exploitation and exploration strategies and sampling point selection based on methodically-generated candidate points, which eliminates the need for external techniques such as swarm optimizers to optimize over the SM response surface. The candidate points generation method allows us to consider a generalized convex search space, in contrast to that of \cite{Sabug2020CDC,Sabug2020} which only applies to convex polytopes. The points generation proposed in SMGO-$\Delta$ offers richer variety of search directions than MADS \cite{Audet2006,ledigabel2011,audet2021nomad}. Furthermore, it allows the algorithm to provide reproducible results, i.e. the same starting sample/s will lead to the same best result, assuming no noise.
    \item We introduce a custom-tunable parameter to change the exploration behavior, ranging from “cautious” (staying in the currently discovered feasible region) to “risky” (allowing discovery of disjoint feasible regions, which might contain the global minimizer). Furthermore, we do not require an initial feasible point, and we automatically prioritize regions where more constraints are estimated as fulfilled, encouraging the search for the feasible region.
    \item The theoretical convergence of the proposed algorithm -- an aspect that is often not rigorously considered in the literature -- is investigated and proven in this paper. The convergence guarantees we give are only subject to Lipschitz continuity assumption for the objective and constraints, milder than in \cite{Alimo2020,Beyhaghi2017,Alimo2021}.
    \item We treat practical implementation concerns w.r.t. bounded noise and search directions, and offer methods to deal with such. Furthermore, the computational complexity of the method is analyzed, and with an iterative implementation, we can improve the complexity to $\mathcal{O}(Dn+n^2)$, where $D$ is the search space dimensionality, and $n$ is the number of existing samples.
    \item The presented SMGO-$\Delta$ is compared with other optimizers in a set of benchmark functions, and is shown to be competitive w.r.t. the state of the art. Furthermore, we compare it side-by-side with constrained Bayesian optimization (CBO) in an engineering problem of black-box tuning of a model predictive controller (MPC) for a servomotor. We show via statistical tests that our proposed method is competitive with CBO in iteration-wise optimization performance, but is much faster in terms of computational time.
\end{itemize}

The first version of the SMGO-$\Delta$ algorithm has been presented in \cite{Sabug2021CDC}, only dealing with noiseless evaluations. The additional original contributions in the present work are: an improvement of the exploitation and exploration mechanisms; the derivation of theoretical guarantees; a discussion on practical implementation issues, such as alternative generation methods for the search directions, and computational complexity; the practical consideration of unknown but bounded additive disturbances for both objective and constraints, which are automatically estimated using existing data; an analysis on the hyperparameter sensitivity; finally the benchmark tests and comparative study on an engineering problem with CBO.

This paper is organized as follows. Section~\ref{sec:prob-state} states the assumptions and the problem we aim to address. The proposed method is introduced in Section~\ref{sec:smgo-d-algo}, followed by the theoretical properties in Section~\ref{sec:algo-analysis} and a discussion on implementation issues in Section~\ref{sec:implement-notes}. The performance and sensitivity analysis with an analytic function is presented in Section~\ref{sec:perf-test}. Benchmarks and comparison with state-of-the-art techniques are given in Section~\ref{sec:syn-tests}. The results of the MPC tuning case study are discussed in Section~\ref{sec:expt-tests}, and we draw the conclusions in Section~\ref{sec:conclusion}.

\section{Problem Statement}
\label{sec:prob-state}

We consider the minimization of a cost function $f(\bm{x}), ~f(\bm{x}): \mathcal{X} \rightarrow \mathbb{R}$, where $\mathcal{X} \subset \mathbb{R}^D$ is a compact and convex search set (if only black-box constraints are present, $\mathcal{X}$ can be taken for example as a large-enough bounding hypercube containing all values of $\bm{x}$ that are meaningful according to the application at hand). This minimization is subject to the satisfaction of constraints $g_s, s=1,\ldots,S$: a constraint $g_s$ is satisfied at point $\bm{x}$ when $g_s(\bm{x}) \geq 0$.

Neither $f$ nor any $g_s$ are assumed to be known. The only \textit{a priori} assumption about $f$ and all $g_s$ is given as follows:

\begin{assumption}
\label{ass:lipschitz}
$f$ and $g_s, s=1,\ldots,S$ are Lipschitz continuous functions over $\mathcal{X}$ with unknown Lipschitz constants $\gamma,\,\rho_1,\ldots,\rho_S$:

\[ 
\small
\begin{array}{rcl}
     f &\in& \mathcal{F}(\gamma)\\
     g_1&\in& \mathcal{F}(\rho_1)\\
     &\vdots&\\
     g_S&\in& \mathcal{F}(\rho_S)
\end{array}
\]

\noindent where
\vspace{-0.25in}

\begin{equation}
\small
    \mathcal{F}(\eta) \doteq \Big\{ h: |h(\bm{x}_1) - h(\bm{x}_2)| ~\leq \eta\|\bm{x}_1 - \bm{x}_2\|, \forall\bm{x}_1,\bm{x}_2 \in \mathcal{X} \Big\}.
\end{equation}
\end{assumption}

\noindent In this paper, $\|\cdot\|$ is the 2-norm (or the Euclidean norm). We also assume to be able to evaluate these functions at a given point $\idxpt{x}{n}{}$, where $n\in\mathbb{N}$ is a counter of sampled data points. We denote the resulting values as $\idxval{z}{n}{}=f(\idxpt{x}{n}{})$ and $\idxval{c}{n}{s}=g_s(\idxpt{x}{n}{})$. This is reasonable in many cases of practical interest, when physical processes have finite sensitivity (Assumption~\ref{ass:lipschitz}), the process is repeatable, and accurate sensors are used. In Section~\ref{sec:implement-notes}, we consider an extension to the case of unknown but bounded additive uncertainty.

For compactness of notation, we also introduce the vector of sampled constraint values as $\idxval{\boldsymbol{c}}{n}{}=[\idxval{c}{n}{1},\ldots,\idxval{c}{n}{S}]^\top$, where $^\top$ denotes the matrix transpose operation.
Finally, we assume a non-empty feasible set. Let

\vspace{-0.1in} 
\[
\small
\mathcal{G}_s = \left\{ \bm{x} \in \mathcal{X} ~:~ g_s(\bm{x}) \geq 0\right\}
\vspace{-0.05in}
\]

\noindent be the set of points satisfying the $s$-th constraint.

\begin{assumption}
\label{ass:existence}
Denoting $\mathcal{G} \triangleq \mathcal{X}\cap\left\{\bigcap_{s=1}^{S} \mathcal{G}_s\right\}$, we have 

\[
\small
     \mathcal{G} \neq\varnothing.
\]
\end{assumption}

\noindent Due to Assumptions~\ref{ass:lipschitz} and \ref{ass:existence}, there exists a global minimizer $\bm{x}^*$ such that

\[
    \bm{x}^* \in \big\{ \bm{x} \in \mathcal{G} ~|~ \forall \bm{x}' \in \mathcal{G}, f(\bm{x}') \geq f(\bm{x}) \big\}.
\]

\noindent with the corresponding minimum $z^* = f(\bm{x}^*)$. We denote the data collected by our optimization algorithm as

\begin{equation}\label{eqn:data-set}
\bm{X}^{\iter{n}}= \left\{(\bm{x}^{(1)},z^{(1)},\bm{c}^{(1)}); (\bm{x}^{(2)},z^{(2)},\bm{c}^{(2)}); \ldots; (\bm{x}^{(n)},z^{(n)},\bm{c}^{(n)}) \right\}.
\end{equation} 

\noindent The tuple describing the best (feasible) sample $(\bm{x}^{*\iter{n}}, z^{*\iter{n}}, \bm{c}^{*\iter{n}})$ is

\begin{equation}
\small
\label{eqn:best-sample}
\begin{array}{rccc}
    (\bm{x}^{*\iter{n}}, z^{*\iter{n}}, \bm{c}^{*\iter{n}}) &= &\arg&\min\limits_{(\idxpt{x}{i}{}, \idxval{z}{i}{}, \idxpt{c}{i}{}) \in \bm{X}^{\iter{n}}} \idxval{z}{i}{} \\
    &&&\mathrm{s.t.}  ~\idxpt{c}{i}{} \geq 0
\end{array}
\end{equation}

\noindent which is the feasible sample with lowest objective value. A lexicographic criterion is used to sort out possible multiple feasible points with the same (best) cost. In the remainder, for the sake of notational simplicity, we refer to the best point as $\bm{x}^{*\iter{n}}$. We make no assumptions about the feasibility of the starting point and the existence of the best point $\bm{x}^{*\iter{n}}$ (as defined in \eqref{eqn:best-sample}) at each iteration. Hence, $\bm{x}^{*\iter{n}}$ may not exist for the first iterations of the procedure, until for some $n$ the test point is feasible, after which it will exist for all succeeding iterations. The theoretical properties of SMGO-$\Delta$, which we introduce later on, guarantee that indeed a feasible point will be sampled at finite iterations, under Assumption \ref{ass:existence}.

Now we are ready to state the problem addressed in this paper.

\begin{problem}
Design an algorithm that generates a sequence of points $\left\{ \idxpt{x}{1}{}, \idxpt{x}{2}{}, \ldots \right\}$, $\idxpt{x}{i}{} \in \mathcal{X}$, such that

\vspace{-0.2in}
\begin{align*}
    \forall \varepsilon>0, \exists n_\varepsilon < \infty ~:~ & z^{*\iter{n_\varepsilon}} \leq z^* + \varepsilon, \\
    ~ & c^{*\iter{n_\varepsilon}}_s \geq 0, s=1,\ldots,S.
\end{align*}
% \vspace{-0.15in}
% \[
% \small
% \bm{x}^* =\arg\min\limits_{\bm{x} \in\mathcal{X}} f(\bm{x}) \text{ subject to }\bm{x} \in \bigcap\limits_{s=1}^{S} \mathcal{G}_s
% \]
\end{problem}

\section{Set Membership Global Optimization With Black-Box Constraints (SMGO-$\Delta$): Algorithm}
\label{sec:smgo-d-algo}
% The exploration weighting is defined as 

% \begin{equation}
% \label{eqn:exploration-weight}
%     \xi(\bm{x}) = (1-\Delta)w_\lambda(\bm{x}) + \Delta w_\pi(\bm{x})w_g(\bm{x})
% \end{equation}

As usual in the literature, the search for $\bm{x}^*$ is carried out by a sequential sampling procedure, wherein the next test point is decided on the basis of the existing data. The inclusion of black-box constraints implies significant changes in the algorithm design w.r.t. the unconstrained case \cite{Sabug2020}, as discussed in the following subsections.

\subsection{Update of the data-set and of the Lipschitz constants and disturbance amplitudes}
At each iteration $n$, a point $\idxpt{x}{n}{}\in\mathcal{X}$ is tested by the algorithm, chosen with a strategy described later on. The corresponding sampled tuple $(\idxpt{x}{n}{}, z^{(n)}, \idxval{\boldsymbol{c}}{n}{})\in\mathbb{R}^{D+1+S}$ is added to the data-set $\itrpt{X}{n}{}$:
\vspace{-0.2in}

\[
    \itrpt{X}{n}{} = \itrpt{X}{n-1}{} \cup (\idxpt{x}{n}{}, z^{(n)}, \idxval{\boldsymbol{c}}{n}{}).
\]

From the updated data-set $\itrpt{X}{n}{}$, the algorithm updates the Lipschitz constants' estimates $\tilde{\gamma}^{\iter{n}}$ and $\tilde{\rho}_s^{\iter{n}},\,s=1,\ldots,S$ (see Assumption \ref{ass:lipschitz}) as (see also \cite{Milanese2004}):
\vspace{-0.125in}

\begin{equation}
\small
    \label{eqn:gamma-update}
    \itrval{\tilde{\gamma}}{n}{} = \max_{(\idxpt{x}{i}{},\idxval{z}{i}{},\idxpt{c}{i}{}), (\idxpt{x}{j}{},\idxval{z}{j}{},\idxpt{c}{j}{})\in\itrpt{X}{n}{}} \left( 
      \frac{|\idxval{z}{i}{}-\idxval{z}{j}{}|}{\|\idxpt{x}{i}{}-\idxpt{x}{j}{}\|}, \underline{\gamma}\right),
\end{equation}

\begin{equation}
\small
    \label{eqn:rho-update}
    \itrval{\tilde{\rho}}{n}{s} = \max_{(\idxpt{x}{i}{},\idxval{z}{i}{},\idxpt{c}{i}{}), (\idxpt{x}{j}{},\idxval{z}{j}{},\idxpt{c}{j}{})\in\itrpt{X}{n}{}} \left(
      \frac{|\idxval{c}{i}{s}-\idxval{c}{j}{s}|}{\|\idxpt{x}{i}{}-\idxpt{x}{j}{}\|}, \underline{\rho}_s \right),
\end{equation}

\noindent where $\underline{\gamma} > 0,\,\underline{\rho}_s > 0$ are small but finite initial estimates for $\gamma,\,\rho_s,\,s=1,\ldots,S$, respectively. 

% Together with the disturbance bounds \eqref{eqn:epsilon-f-update}-\eqref{eqn:epsilon-s-update}, such 
The above-given estimated Lipschitz constants are, by construction, unfalsified by the available data. At $n=1$, one can set these estimates to $\underline{\gamma}, \underline{\rho}_s$, and select the test point $\idxpt{x}{1}{}$ with a strategy of choice (for example a random starting point, or, as discussed in our simulation example, a sensible point for the application at hand).

We can now build the following upper- and lower-bound functions, $\overline{f}^{\iter{n}}(\bm{x})$ and $\underline{f}^{\iter{n}}(\bm{x})$, resorting to a Set Membership approach \cite{Milanese2004}:
\vspace{-0.05in}

% \begin{equation}
% \small
%   \label{eqn:f-upper-bounds}
%   \overline{f}^{\iter{n}}(\bm{x}) \triangleq \itrval{\tilde{\epsilon}}{n}{f} + \min_{k = 1, \ldots, n} \left(z^{(k)} + \tilde{\gamma}^{\iter{n}} \|\bm{x}-\bm{x}^{(k)} \|\right),
% \vspace{-0.1in}
% \end{equation}

% \begin{equation}
% \small
%   \label{eqn:f-lower-bounds}
%   \underline{f}^{\iter{n}}(\bm{x}) \triangleq -\itrval{\tilde{\epsilon}}{n}{f}  + \max_{k = 1, \ldots, n} \left(z^{(k)} - \tilde{\gamma}^{\iter{n}} \|\bm{x}-\bm{x}^{(k)} \|\right).
% \end{equation}

\begin{equation}
\small
  \label{eqn:f-upper-bounds}
  \overline{f}^{\iter{n}}(\bm{x}) \triangleq \min_{k = 1, \ldots, n} \left(z^{(k)} + \tilde{\gamma}^{\iter{n}} \|\bm{x}-\bm{x}^{(k)} \|\right),
\vspace{-0.1in}
\end{equation}

\begin{equation}
\small
  \label{eqn:f-lower-bounds}
  \underline{f}^{\iter{n}}(\bm{x}) \triangleq \max_{k = 1, \ldots, n} \left(z^{(k)} - \tilde{\gamma}^{\iter{n}} \|\bm{x}-\bm{x}^{(k)} \|\right).
\end{equation}

\noindent These functions represent the tightest bounds for the unsampled regions, given the sampled points and the Lipschitz continuity assumption. Furthermore, we build the central approximation of the objective function,

\[
\small
    \tilde{f}^{\iter{n}}(\bm{x}) = \frac{1}{2}\left(\overline{f}^{\iter{n}}(\bm{x}) + \underline{f}^{\iter{n}}(\bm{x})\right)
\]

\noindent and the uncertainty measure

\[
\small
    \itrval{\lambda}{n}{}(\bm{x}) = \overline{f}^{\iter{n}}(\bm{x}) - \underline{f}^{\iter{n}}(\bm{x}).
\]

A visual interpretation of the SM-based bounds and function approximation is shown in Fig.~\ref{fig:sm-model}. The same is performed to calculate the quantities $\overline{g}_s^{\iter{n}}(\bm{x})$, $\underline{g}_s^{\iter{n}}(\bm{x})$, $\tilde{g}_s^{\iter{n}}(\bm{x})$, and $\pi_s^{\iter{n}}(\bm{x})$ for each constraint function $g_s$:

% \begin{subequations}\label{eqn:constr-approx}
% \begin{gather}
% \small
%   \overline{g}_s^{\iter{n}}(\bm{x}) \triangleq \min\limits_{k = 1, \ldots, n} \left(c_s^{(k)} + \itrval{\tilde{\epsilon}}{n}{s} + \tilde{\rho}_s^{\iter{n}} \|\bm{x}-\bm{x}^{(k)} \|\right)\\
%   \underline{g}_s^{\iter{n}}(\bm{x}) \triangleq \max\limits_{k = 1, \ldots, n} \left(c_s^{(k)} - \itrval{\tilde{\epsilon}}{n}{s} - \tilde{\rho}_s^{\iter{n}} \|\bm{x}-\bm{x}^{(k)} \|\right)\\
%     \tilde{g}_s^{\iter{n}}(\bm{x}) = \frac{1}{2}\left(\overline{g}_s^{\iter{n}}(\bm{x}) + \underline{g}_s^{\iter{n}}(\bm{x})\right)\label{eqn:constr-approx-central}\\
%     \itrval{\pi}{n}{s}(\bm{x}) = \overline{g}_s^{\iter{n}}(\bm{x}) - \underline{g}_s^{\iter{n}}(\bm{x}).
% \end{gather}
% \end{subequations}

\begin{subequations}\label{eqn:constr-approx}
\begin{gather}
\small
  \overline{g}_s^{\iter{n}}(\bm{x}) \triangleq \min\limits_{k = 1, \ldots, n} \left(c_s^{(k)} + \tilde{\rho}_s^{\iter{n}} \|\bm{x}-\bm{x}^{(k)} \|\right)\\
  \underline{g}_s^{\iter{n}}(\bm{x}) \triangleq \max\limits_{k = 1, \ldots, n} \left(c_s^{(k)} - \tilde{\rho}_s^{\iter{n}} \|\bm{x}-\bm{x}^{(k)} \|\right)\\
    \tilde{g}_s^{\iter{n}}(\bm{x}) = \frac{1}{2}\left(\overline{g}_s^{\iter{n}}(\bm{x}) + \underline{g}_s^{\iter{n}}(\bm{x})\right)\label{eqn:constr-approx-central}\\
    \itrval{\pi}{n}{s}(\bm{x}) = \overline{g}_s^{\iter{n}}(\bm{x}) - \underline{g}_s^{\iter{n}}(\bm{x}).
\end{gather}
\end{subequations}

\begin{figure}[!t]
	\centering
	\includegraphics[width=\columnwidth]{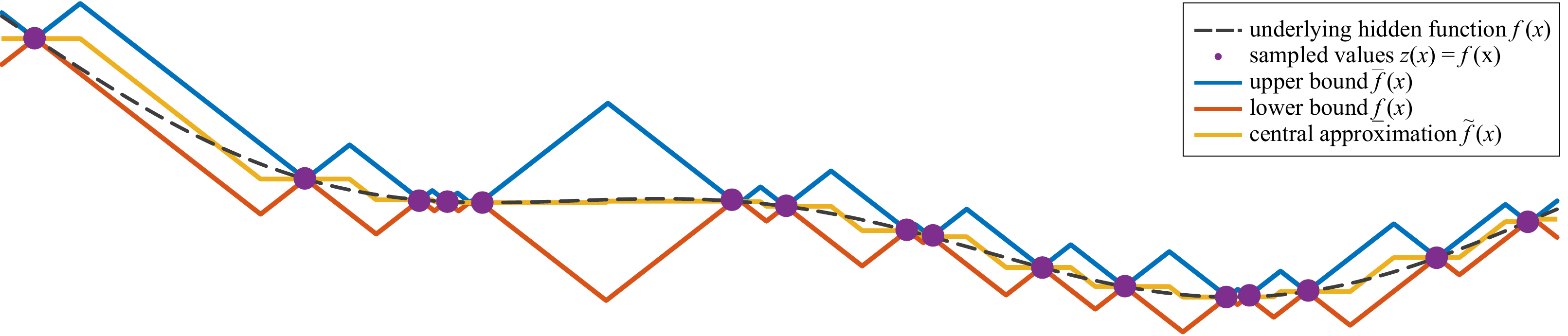}
	\caption{SM-based upper- and lower bounds,\\ and function approximation}
	
	\label{fig:sm-model}
\end{figure}

After updating the current best point and the estimates of Lipschitz constant, upper and lower bounds, functions values and uncertainty, the proposed optimization algorithm attempts first an exploitation strategy, possibly followed (if not satisfactory enough) by an exploration one. These two routines are described next.

%After updating the disturbance bounds' estimates, Lipschitz constant estimates, upper and lower bounds, central estimates, uncertainty estimates, and current best point, the proposed optimization algorithm attempts first an exploitation strategy, possibly followed (if not satisfactory enough) by an exploration one. These two routines are described next.

\subsection{Exploitation with constraints}\label{subsec:exploit}
This subroutine attempts to improve on the current best value by searching $\mathcal{X}$ for a better candidate point according to the lower bound of the cost function, while satisfying the estimated constraints. Let us denote with $\itrpt{E}{n}{}\in\mathcal{X}$ a finite set of candidate points, selected according to a gridding strategy described in Section \ref{subsec:cdpt-generation}, and with $\mathcal{T}^{\iter{n}}$ a trust region around $\bm{x}^{*\iter{n}}$, discussed in Section \ref{subsec:trust-region}. Let us further introduce the user-chosen scalar $\Delta \in [0, 1]$. Then, the exploitation routine solves the following problem:
\vspace{-0.1in}

\begin{subequations}
\begin{gather}
    \small
    \itrpt{x}{n}{\theta} = \arg\min_{\bm{x} \in \itrpt{E}{n}{} \cap \mathcal{T}^{\iter{n}}} \xi_\theta(\bm{x})  \label{eqn:exploit}\\
    \text{s.t.} ~\Delta\tilde{g}_s^{\iter{n}}(\bm{x}) + (1-\Delta)\underline{g}_s^{\iter{n}}(\bm{x})\geq 0, s = 1, \ldots, S \label{eqn:exploit-constraint}
\end{gather}
\end{subequations}

\noindent where the exploitation cost is defined as 

\begin{equation}
\label{eqn:exploit-cost-nominal}
    \xi_\theta(\bm{x}) \triangleq \tilde{f}^{\iter{n}}(\bm{x}) - \beta\lambda^{\iter{n}}(\bm{x}),
\end{equation}

\noindent and $\beta$ is a user-defined weighting parameter. The objective function in \eqref{eqn:exploit} is a trade-off between minimizing the central approximation of the cost function $f$ ($\beta=0$), and maximizing the associated uncertainty in order to gain more information with the next sample ($\beta>0$). As default value, we set $\beta=0.1$ as in~\cite{Sabug2021CDC}.

We confine the optimization \eqref{eqn:exploit} inside a trust region $\mathcal{T}^{\iter{n}}$ to search in the vicinity of the existing best point $\bm{x}^{*\iter{n}}$, evaluating whether there is a neighboring point that improves on the current best. Furthermore, our design of exploitation cost $\xi_\theta(\bm{x})$ aims to choose a point that improves as much as possible from $\bm{x}^{*\iter{n}}$, by using the lower bounds. Resolving the terms in \eqref{eqn:exploit-cost-nominal}, it is actually a weighted sum of $\overline{f}^{\iter{n}}(\bm{x})$ and $\underline{f}^{\iter{n}}(\bm{x})$ which results in favoring points that are close to $\bm{x}^{*\iter{n}}$, aiming for a local optimization. %The first term $\tilde{f}^{\iter{n}}(\bm{x})$ will ensure that $\tilde{f}^{\iter{n}}(\bm{x})$ (which can never be lower than $z^{*\iter{n}}$) is as low as possible, and is ideally $z^{*\iter{n}}$, implying that $\bm{x}^{*\iter{n}}$ is the point that generated its lower bound. Maximizing the second term $\beta\lambda^{\iter{n}}(\bm{x})$ is motivated by the fact that a larger uncertainty means a lower $\underline{f}^{\iter{n}}$, provided that the central approximation is also low (assuming a low $\beta$ value, such as our default setting in the paper). This design is in contrast to solely minimizing the lower bound as in \cite{Sabug2020CDC,Sabug2020}, i.e. $\xi_\theta(\bm{x}) = \underline{f}^{\iter{n}}$, which may sample in sparsely-covered areas simply because they have lower $\underline{f}^{\iter{n}}$. Note that in such regions, the upper bounds $\overline{f}^{\iter{n}}$ are also higher, hence, we are less sure whether sampling there will result in improvement of the current best.

The hyperparameter $\Delta$ in \eqref{eqn:exploit-constraint} is referred to as the \textit{risk factor}: it affects the cautiousness in choosing the exploitation point. $\Delta = 0$ provides the most cautious choice of a feasible candidate point, because it uses the constraints' lower bounds $\underline{g}_s^{\iter{n}}(\bm{x})$ to estimate the feasible set, thereby shrinking it considerably. On the other hand, $\Delta = 1$ is associated with a riskier exploitation behavior, using the central approximation $\tilde{g}_s^{\iter{n}}(\bm{x})$ and correspondingly expanding the estimated feasible set.

If \eqref{eqn:exploit} is estimated to be feasible, the exploitation routine then carries out a test on the expected improvement obtained by the resulting point $\itrpt{x}{n}{\theta}$, w.r.t. the current best value, as done also in \cite{Sabug2020}:

\vspace{-0.1in}

\begin{equation}
\small
\label{eqn:exploit-cond}
    \itrval{\underline{f}}{n}{}(\itrpt{x}{n}{\theta}) \leq z^{*\iter{n}} - \eta,
\end{equation}

\noindent where $\eta = \alpha \itrval{\tilde{\gamma}}{n}{}$ is the expected improvement threshold. If this test is passed, SMGO-$\Delta$ will choose to sample the selected point, i.e. $\idxpt{x}{n+1}{} = \itrpt{x}{n}{\theta}$. On the other hand, if \eqref{eqn:exploit} is estimated infeasible or condition \eqref{eqn:exploit-cond} is not met, the optimization algorithm proceeds to exploration. Through the expected improvement test, SMGO-$\Delta$ avoids being stuck in a local minimum, as will be proven in Section~\ref{sec:algo-analysis}.

% \begin{remark}
%     \hl{Alternatively, we can define the exploitation cost as $\xi_\theta(\bm{x}) \triangleq \tilde{f}^{\iter{n}}(\bm{x}) - \beta\lambda^{\iter{n}}(\bm{x})$, where $\beta$ is an additional user-defined weighting parameter. For context, this alternative exploitation cost minimizes the central approximation and maximizes the uncertainty. Resolving the definitions of the terms, this is simply a linear combination of $\underline{f}^{\iter{n}}(\bm{x})$ and $\overline{f}^{\iter{n}}(\bm{x})$, such that \eqref{eqn:exploit-cost-nominal} is a special case with $\beta=0.5$. After empirical tests, we found out that $\beta = 0.1$ has resulted in even better results than the nominal definition \eqref{eqn:exploit-cost-nominal}.}\textbf{Note by Lorenzo: not sure whether to keep this}
% \end{remark}

\begin{remark}
In this paper, for simplicity we loosely use the term ``feasibility'' and ``constraint satisfaction'' depending on the context. When we describe a sample $\idxpt{x}{i}{} \in \itrpt{X}{n}{}$ as being feasible or having satisfied/fulfilled a constraint $g_s$, we mean that the constraints are actually satisfied as evaluated in the corresponding simulation/experiment, i.e., $\idxval{c}{i}{s} \geq 0$. However, we consider that an unsampled point $\bm{x} \in \mathcal{X} \setminus \itrpt{X}{n}{}$ is feasible or satisfies constraint $g_s$ if it is \underline{predicted} to fulfill the constraint/s based on its central estimate \eqref{eqn:constr-approx-central}, i.e., $\tilde{g}_s(\bm{x}) \geq 0$.
\end{remark}

\subsection{Exploration by uncertainty and estimated constraint violations}
The exploration routine probes the areas of the search space where cost function uncertainty is largest, while at the same time penalizing possible constraint violations. The exploration point $\itrpt{x}{n}{\psi}$ is chosen as:
\vspace{-0.125in}

\begin{align}
\label{eqn:explore}
    \itrpt{x}{n}{\psi} = \arg\max_{\bm{x} \in \itrpt{E}{n}{}} \itrval{\xi}{n}{\psi}(\bm{x})
\end{align}

% \[
%     \itrpt{x}{n}{\psi} = \argmax_{\bm{x} \in \itrpt{E}{n}{}} d(\bm{x})\Big((1-\Delta)w_\lambda(\bm{x}) + \Delta w_\pi(\bm{x})w_g(\bm{x})\Big) +  \psi k(\itrval{\tau}{n}{}(\bm{x}))
% \]

\noindent with the \textit{exploration merit function} defined as:

\begin{equation}
\label{eqn:explore-merit}
    \itrval{\xi}{n}{\psi}(\bm{x}) \triangleq \itrval{h}{n}{}(\bm{x}) + k(\itrval{\tau}{n}{}(\bm{x})),
\end{equation}

\noindent where $\itrval{\tau}{n}{}(\bm{x})$ describes the age of the candidate point, i.e. the iterations elapsed since its creation. Furthermore, $k(\cdot)$ must be chosen as a class-$\mathcal{K}_\infty$ function, i.e. such that it is continuous, strictly increasing, $\lim_{\tau \rightarrow \infty} k(\tau) = +\infty$, and $k(0) = 0$. The inclusion of $k(\cdot)$ encourages the exploration of the most remote areas in $\mathcal{X}$, and in turn, guarantees the convergence of SMGO-$\Delta$, as proven in Section \ref{sec:algo-analysis}. Finally, $\itrval{h}{n}{}(\cdot)$ is a user-chosen merit function, whose sole requirement is to be bounded, i.e., that $\forall \bm{x} \in \mathcal{X}, n \in [1, +\infty), | \itrval{h}{n}{}(\bm{x}) | < \infty$. In principle, this function shall be chosen to trade-off constraint satisfaction and exploration of points where uncertainty is largest. This is in contrast with our work in the unconstrained case \cite{Sabug2020CDC,Sabug2020} where we only search for points with highest uncertainty. In this work, we propose the following:

\begin{equation}
\label{eqn:exploration-weight}
\itrval{h}{n}{}(\bm{x}) \triangleq d(\bm{x})\Big((1-\Delta)w_\lambda(\bm{x}) + \Delta w_\pi(\bm{x})w_g(\bm{x})\Big),
\end{equation}
where:
\begin{equation}
\label{eqn:explore-remoteness-measure}
    d(\bm{x}) = \min_{(\idxpt{x}{i}{},\idxval{z}{i}{},\idxpt{c}{i}{}) \in \itrpt{X}{n}{}} \| \idxpt{x}{i}{} - \bm{x} \|,
    \end{equation}
\begin{align}
\small
\label{eqn:explore-sum-lambda}
    w_\lambda(\bm{x}) = 
    \begin{cases}
      \lambda^{\iter{n}}(\bm{x}) & \mathrm{if~}\Delta\tilde{g}_s^{\iter{n}}(\bm{x}) + (1-\Delta)\underline{g}_s^{\iter{n}}(\bm{x})\geq 0,\, s = 1, \ldots, S\\
      0 & \text{otherwise,}
    \end{cases}
\end{align}

\begin{align}
\small
\label{eqn:explore-sum-pi}
    w_\pi(\bm{x}) &= \sum_{s=1}^{S}\frac{\pi_s^{\iter{n}}(\bm{x})}{\itrval{\tilde{\rho}}{n}{s}}, \\
\label{eqn:explore-prio}
    w_g(\bm{x}) &= 2^{\sum \mathds{1}_s(\bm{x})}%prod_{s=1}^S \left( \mathds{1}_s(\bm{x}) + 1 \right),
\end{align}

\begin{equation}
\label{eqn:explore-prio-indicator}
    \mathds{1}_s(\bm{x}) = 
    \begin{cases}
      1 & \text{if}\;\tilde{g}_s^{\iter{n}}(\bm{x}) \geq 0\\
      0 & \text{otherwise.}
    \end{cases}
\end{equation}

The merit function prioritizes more remote candidate points by using $d(\bm{x})$ \eqref{eqn:explore-remoteness-measure} as a multiplier to the other terms in $\itrval{h}{n}{}(\bm{x})$. The term $w_\lambda(\bm{x})$ in \eqref{eqn:exploration-weight} is included to encourage the exploration of points with more uncertainty w.r.t. the cost function $f$, using the same condition as \eqref{eqn:exploit-constraint}, hence exhibiting cautious exploration depending on $\Delta$. This is motivated by our need to discover the shape of the underlying cost $f$ since sampling will shrink the uncertainty $\lambda^{\iter{n}}$ at that point to zero, while correspondingly decreasing the uncertainty at its vicinity. However, we only consider points that we estimate to be feasible, to avoid sampling in unfeasible regions when we already discovered feasible one/s.

The second term in \eqref{eqn:exploration-weight} aims to discover the feasible set, favoring points with higher uncertainty $\itrval{\pi}{n}{s}$ w.r.t. the constraint functions $g_s$. Each estimate $\itrval{\pi}{n}{s}$ is normalized by its Lipschitz constant $\itrval{\tilde{\rho}}{n}{s}$ (see \eqref{eqn:explore-sum-pi}) to be less sensitive with respect to the range of absolute values of each constraint.

We define \eqref{eqn:explore-prio}-\eqref{eqn:explore-prio-indicator} to give priority to points with more (estimated) satisfied constraints, practically doubling the merit with every additional constraint fulfilled at a point. 

% The term $w_\lambda$ in \eqref{eqn:exploration-weight} aims to find the candidate point with maximum uncertainty $\lambda$, but this term is factored in only if the candidate point fulfills all estimated constraints (see \eqref{eqn:explore-sum-lambda}). On the other hand, the right-hand expression inside the $\argmax$ in \eqref{eqn:explore} finds the candidate point with the maximum total (normalized) uncertainty w.r.t. all $g_s$ (see \eqref{eqn:explore-sum-pi}). However, we design it to double for every additional constraint fulfilled, in force of \eqref{eqn:explore-prio}-\eqref{eqn:explore-prio-indicator}. This way, the selection prioritizes points that are predicted to satisfy a larger number of constraints, according to the information collected up to iteration $n$.

The risk factor $\Delta \in [0, 1]$, in a similar manner as in exploitation, affects the level of caution in exploring outside the estimated feasible region. $\Delta \rightarrow 0$ results in higher cautiousness, favoring sampling within the estimated feasible region (we mostly consider $w_\lambda(\bm{x})$ in our exploration).

The selected point $\itrpt{x}{n}{\psi}$ is then directly assigned as the next sample point $\idxpt{x}{n+1}{}$.

\begin{remark}
\label{remark:safe-opt}
The concept of \textit{caution} considered here is different from \textit{safe} optimization as discussed in \cite{Konig2021}. Indeed, because the Lipschitz constants $\rho_s$ of the constraints $g_s$ are unknown, we cannot guarantee \textit{a priori} the safety of a test point. On the other hand, under the additional assumptions of having a feasible starting point and of knowing upper bounds on the Lipschitz constants $\rho_s$, one can use $\Delta=0$ to guarantee robustly hard constraint satisfaction.
\end{remark}

\begin{remark}
\label{remark:age-term}
We have introduced the age-based term in \eqref{eqn:explore-merit}. While guaranteeing convergence, a high slope for $k$ can cause candidate points to be chosen because of their age, and not because of their potential information gain. Thus, $k$ should be chosen with very small slope to mitigate this effect.
\end{remark}

\subsection{Generation of candidate points}\label{subsec:cdpt-generation}
In the selection of the exploitation point $\itrpt{x}{n}{\theta}$ \eqref{eqn:exploit} and the exploration one $\itrpt{x}{n}{\psi}$ \eqref{eqn:explore}, we consider a set of candidates $\itrpt{E}{n}{}$, which is systematically generated based on the existing data. Such approximation eliminates the need for an external optimization algorithm to calculate \eqref{eqn:exploit} and \eqref{eqn:explore}. Furthermore, the proposed algorithm is repeatable, i.e. given the same starting point $\idxpt{x}{1}{}$ and search set $\mathcal{X}$, the algorithm will produce the same sampling points sequence and final result.

% \begin{figure}[!t]
% 	\centering
% 	\includegraphics[width=0.8\columnwidth]{cdpt-add.pdf}
% 	\caption{Proposed method for introduction of new \\candidate points ($D=2$ and $B=5$)}
% 	\label{fig:cdpt-add}
% \end{figure}

In \cite{Sabug2020CDC,Sabug2020}, we proposed a candidate point generation built upon the midpoints among all existing samples, including the vertices of the search set, assumed to be polytopic in those previous works. However, the complexity of such candidate points generation is exponential w.r.t. dimension $D$, in the rather common case of hyperrectangular $\mathcal{X}$. Hence, we propose another candidate points generation strategy with improved complexity w.r.t. $D$.

Given an incoming sample $\idxpt{x}{n}{}$, we cumulatively add candidate points to $\itrpt{E}{n}{}$ according to the following criteria:
\begin{enumerate}
    \item[1)] along the coordinate directions stemming from \idxpt{x}{n}{},
    \item[2)] by gridding between each sampled point and all the other ones.
\end{enumerate}

To generate 1), using the coordinate directions $\pm \hat{\bm{a}}_d, d = 1, \ldots, D$, we define
\vspace{-0.125in}

\begin{align}
\small
    \label{eqn:cdpt-end-d1}
    \idxpt{b}{n}{+d}=& \max_{b \in [0, \infty)} b\\
    \mathrm{s.t.}\ \  & \idxpt{x}{n}{} + b\hat{\bm{a}}_d \in \mathcal{X},\nonumber \\
    \label{eqn:cdpt-end-d2}
    \idxpt{b}{n}{-d} =& \max_{b \in [0, \infty)} b\\
    \mathrm{s.t.}\ \ &\idxpt{x}{n}{} - b\hat{\bm{a}}_d \in \mathcal{X}.\nonumber 
\end{align}

% \begin{align}
%     \label{eqn:cdpt-end-d1}
%     \idxpt{y}{n}{+d} = \left\{ \idxpt{x}{n}{} + b\hat{\bm{a}}_d, b = [0, \infty) \right\} \cap \partial \mathcal{X}, \\
%     \label{eqn:cdpt-end-d2}
%     \idxpt{y}{n}{-d} = \left\{ \idxpt{x}{n}{} - b\hat{\bm{a}}_d, b = [0, \infty) \right\} \cap \partial \mathcal{X},
% \end{align}

\noindent These are the lengths of the longest segments inside the search set $\mathcal{X}$ departing from $\idxpt{x}{n}{}$ in the coordinate directions. Then, the set of candidate points generated by $\idxpt{x}{n}{}$ along the coordinate directions are
\vspace{-0.125in}

\begin{align}
\small
 \idxpt{Y}{n}{+d} &= \Big\{ \idxpt{x}{n}{} + \frac{k}{B} \idxpt{b}{n}{+d}\hat{\bm{a}}_d, 
 k \in \{ 1, \ldots, B-1 \} \Big\}\\
  \idxpt{Y}{n}{-d} &= \Big\{ \idxpt{x}{n}{} - \frac{k}{B} \idxpt{b}{n}{-d}\hat{\bm{a}}_d, 
 k \in \{ 1, \ldots, B-1 \} \Big\}
\end{align}

\noindent where $1/B$ is the relative grid granularity.

For criterion 2), we define the candidate points along the segments from $\idxpt{x}{n}{}$ pointing to each other sample $\idxpt{x}{j}{}, j=1,\ldots,n-1$ as

\[
    \idxpt{Y}{n}{\circ} = \Big\{ \idxpt{x}{n}{} + \frac{k}{B} \idxpt{x}{j}{}, k \in \{ 1, \ldots, B-1 \}, j \in \{ 1, \ldots, n-1 \} \Big\}.
\]

\noindent Finally, the set of candidate points generated by $\idxpt{x}{n}{}$ is
\vspace{-0.15in}

\begin{equation}
\small
    \idxpt{Y}{n}{} = \left(\bigcup_{d=1}^{D}\left\{ \idxpt{Y}{n}{+d}, \idxpt{Y}{n}{-d} \right\}\right) \bigcup \idxpt{Y}{n}{\circ}.
\end{equation}

% \begin{multline}
%  \idxpt{Y}{n}{} = \Big\{ (1-b)\idxpt{x}{i}{} + b\bm{y}, \\ \bm{y} \in \idxpt{Y}{n}{}, b \in \frac{1}{B}\{ 1, \ldots, B-1 \} \Big\}
% \end{multline}

\noindent Lastly, the aggregate collection of candidate points at iteration $n$ is now
\vspace{-0.125in}

\[
\small
    \itrpt{E}{n}{} = \itrpt{E}{n-1}{} \cup \idxpt{Y}{n}{} .
\]

\noindent This results in a cumulative total of $n(B-1)(2D+\frac{n-1}{2})$ candidate points, which is polynomial w.r.t. $n$, but linear with $D$, thus computationally advantageous with higher search space dimensions. Furthermore, unlike our proposal in \cite{Sabug2020}, we do not require anymore a polytopic search set $\mathcal{X}$, but only a convex and compact one.

\subsection{Exploitation trust region $\mathcal{T}^{\iter{n}}$}\label{subsec:trust-region}

The set $\mathcal{T}^{\iter{n}}$ in \eqref{eqn:exploit} limits the choice of the exploitation candidate point to a trust region around $\bm{x}^{*\iter{n}}$.

A ball $\mathcal{T}^{\iter{n}}$, centered at $\bm{x}^{*\iter{n}}$ and with a radius $\overline{\upsilon} < 1$, is declared when a feasible best point $\bm{x}^{*\iter{n}}$ is found, 

\begin{equation}
\label{eqn:trust-region}
\mathcal{T}^{\iter{n}}=\left\{\bm{x} \in \mathcal{X}: \|\bm{x}-\bm{x}^{*\iter{n}}\|_\infty \leq \upsilon^{\iter{n}}  \right\}    
\end{equation}

In \eqref{eqn:trust-region}, any norm can be used, e.g. 1- or 2-norm, for example in order to adapt to the characteristics of the search set $\mathcal{X}$. The size of $\mathcal{T}^{\iter{n+1}}$ is updated depending on the \textit{a posteriori} value of the sample $\idxval{z}{n+1}{}$, as follows:

\begin{equation}
\label{eqn:trust-region-update}
    \itrval{\upsilon}{n+1}{} = 
    \begin{cases}
      \min(\underline{\upsilon}, \kappa\itrval{\upsilon}{n}{}) & \text{if exploration was done in }n+1 \text{ or } \idxval{z}{n+1}{} > \idxval{z}{n}{}\\
      \max(\overline{\upsilon}, \frac{1}{\kappa}\itrval{\upsilon}{n}{}) & \text{if exploitation was done in }n+1 \text{ and } \idxval{z}{n+1}{} \leq \idxval{z}{n}{} - \alpha\idxval{\gamma}{n}{}, \\
      ~ & \text{with }\idxval{c}{n+1}{s} \geq 0, s=1,\ldots,S,\\
      \itrval{\upsilon}{n}{} & \text{otherwise.}
    \end{cases}
\end{equation}

\noindent where $\kappa < 1$ is the shrinking factor. In summary, we shrink the trust region when an exploitation fails, i.e. there is no exploitation improvement w.r.t. current best $z^{*\iter{n}}$, or if no exploitation point was found at iteration $n+1$ (exploration was done instead). Conversely, we expand the trust region (up to $\overline{\upsilon}$) when the new sample from exploitation has \textit{a posteriori} returned a feasible point and satisfied the expected improvement threshold.

The minimum size of $\mathcal{T}^{\iter{n}}$ is also limited, that is, if $\itrval{\upsilon}{n}{} \leq \underline{\upsilon}$, we reset the trust region size $\itrval{\upsilon}{n+1}{} = \underline{\upsilon}$. In this paper we set the default trust region-related quantities as $\kappa = 0.5$, $\overline{\upsilon} = 0.1$, and $\underline{\upsilon} = \kappa^{10} \overline{\upsilon}$.

\subsection{Algorithm summary}\label{subsec:summary-implementation}
% and implementation notes
A pseudo-code of the proposed method is shown in Algorithm~\ref{algo:smgo-d}. 
% sample code in https://www.overleaf.com/project/5c3a256985b4760f9712d78a
\begin{algorithm}
    \small
    \DontPrintSemicolon
    \KwInput{Initial point $\bm{x}^{(1)}$, search space $\mathcal{X}$ \\ 
    \hspace{0.425in} Lipschitz constants estimates $\itrval{\tilde{\gamma}}{1}{}=\underline{\gamma}$, $\itrval{\tilde{\rho}}{1}{s}=\underline{\rho}$ \\ 
    % \hspace{0.425in} Noise amplitude estimates $\itrval{\tilde{\epsilon}}{1}{f}=0$, $\itrval{\tilde{\epsilon}}{1}{s}=0$ \\
    \hspace{0.425in} Maximum number of iterations $N$. \\
    \hspace{0.425in} Parameters $\alpha, \beta, \psi, \Delta$}
    % \tcp{Initial point transformation}
    % $\bm{x}^{(1)} \leftarrow \bm{g}(\bm{x}^{(1)})$\;
    \While{iteration $n$ within the budget $N$}
    {
        \tcp{Long function evaluation, data update}
        Evaluate the long functions $f$ and $g_s$ at $\bm{x}^{(n)}$, add the resulting sample $(\bm{x}^{(n)}, z^{(n)}, \bm{c}^{(n)})$ to the set $\itrpt{X}{n}{}$\;
        Update Lipschitz constants $\itrval{\tilde{\gamma}}{n}{}$, $\itrval{\tilde{\rho}}{n}{1}, \ldots, \itrval{\tilde{\rho}}{n}{S}$ according to \eqref{eqn:gamma-update}-\eqref{eqn:rho-update}, the current best sample $(\bm{x}^{*\iter{n}}, z^{*\iter{n}}, \bm{c}^{*\iter{n}})$ from $\itrpt{X}{n}{}$ \eqref{eqn:best-sample}, candidate points $\itrpt{E}{n}{}$, and the trust region $\mathcal{T}^{\iter{n}}$ \eqref{eqn:trust-region}-\eqref{eqn:trust-region-update}\;
        \tcp{Exploitation routine}
        Solve \eqref{eqn:exploit} to choose the candidate exploitation point $\itrpt{x}{n}{\theta}$ in the estimated feasible region\;
        \If{ $\itrpt{x}{n}{\theta}$ exists and expected improvement condition \eqref{eqn:exploit-cond} is met}
        {
            Assign test point for next iteration $\bm{x}^{(n+1)} \leftarrow \itrpt{x}{n}{\theta}$\;
        }
        \Else
        {
            \tcp{Exploration routine}
            Solve \eqref{eqn:explore} to compute $\itrpt{x}{n}{\psi}$ \;
            Assign test point for next iteration $\bm{x}^{(n+1)} \leftarrow \itrpt{x}{n}{\psi}$\;
        }
        Go to next iteration $n \leftarrow n+1$
    }
    Final optimal point and value: return the best sample ($\bm{x}^{*\iter{N}}, z^{*\iter{N}}, \bm{c}^{*\iter{N}}$) from the set $\itrpt{X}{N}{}$
\caption{SMGO-$\Delta$ Algorithm}
\label{algo:smgo-d}
\end{algorithm}

\section{Algorithm Analysis}
\label{sec:algo-analysis}
In this section, we analyze the properties of SMGO-$\Delta$, with regards to its convergence and its computational complexity.

\subsection{Convergence Guarantees}
We provide theoretical guarantees on the convergence of SMGO-$\Delta$ to the best feasible point, by proving its dense points generation behavior.

In the remainder, given $\bm{x} \in \mathbb{R}^D$ and $r > 0$, we denote $\mathcal{B}(\bm{x},r)$ as

\[
    \mathcal{B}(\bm{x},r) \triangleq \left\{ \bm{y} ~\big|~ \| \bm{y} - \bm{x} \| \leq r \right\}.
\]

We now present the first lemma, showing that the exploitation will end in finite iterations, allowing for SMGO-$\Delta$ to escape local minima, and continue with exploration around the search space.

\begin{lemma}
\label{lemma:exploit-will-fail}
    Mode~$\theta$ (exploitation) will end after finite iterations.
\end{lemma}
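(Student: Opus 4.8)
The plan is to argue by contradiction: suppose exploitation (mode~$\theta$) is executed at infinitely many iterations. Each successful exploitation must, by the expected improvement test~\eqref{eqn:exploit-cond}, satisfy $\itrval{\underline{f}}{n}{}(\itrpt{x}{n}{\theta}) \leq z^{*\iter{n}} - \eta$ with $\eta = \alpha\itrval{\tilde{\gamma}}{n}{} \geq \alpha\underline{\gamma} > 0$. Since $\itrpt{x}{n+1}{} = \itrpt{x}{n}{\theta}$ and $\underline{f}^{\iter{n}}$ is a valid lower bound for $f$ (under Assumption~\ref{ass:lipschitz}, as noted after~\eqref{eqn:f-lower-bounds}), the \emph{a posteriori} value satisfies $z^{(n+1)} = f(\itrpt{x}{n}{\theta}) \geq \underline{f}^{\iter{n}}(\itrpt{x}{n}{\theta})$, so this does not immediately force a decrease of $z^{*\iter{n}}$. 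However, I will separate two cases.

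First, if only \emph{finitely many} exploitations are \emph{successful} (i.e. actually pass~\eqref{eqn:exploit-cond} and produce a sample), then after some finite iteration every exploitation attempt fails condition~\eqref{eqn:exploit-cond}, hence the algorithm switches to exploration — contradicting the assumption that mode~$\theta$ runs infinitely often as the effective mode. So it remains to exclude the case of \emph{infinitely many successful exploitations}. For this, I will show each successful exploitation strictly decreases a monotone nonincreasing quantity by a fixed amount. The natural candidate is $z^{*\iter{n}}$ itself once a feasible sample appears; but since the improvement test is stated in terms of $\underline{f}^{\iter{n}}$ and feasibility is only predicted (cf.\ the Remark after~\eqref{eqn:explore-prio-indicator}), the sampled point need not be feasible and $z^{*\iter{n}}$ need not drop. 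The cleaner approach is to track the quantity $\min_{k\le n} z^{(k)}$ is not right either; instead I would use the fact that whenever~\eqref{eqn:exploit-cond} holds, $\itrpt{x}{n}{\theta}$ lies at distance at least $\alpha$ (up to the Lipschitz normalization) from every previously sampled point: indeed $\underline{f}^{\iter{n}}(\itrpt{x}{n}{\theta}) = \max_k \big(z^{(k)} - \tilde{\gamma}^{\iter{n}}\|\itrpt{x}{n}{\theta}-\bm{x}^{(k)}\|\big) \leq z^{*\iter{n}} - \alpha\tilde{\gamma}^{\iter{n}}$ forces, for the particular index $k^*$ attaining the best feasible value, $\|\itrpt{x}{n}{\theta} - \bm{x}^{(k^*)}\| \geq \alpha$. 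Hence every successful exploitation point is bounded away (by $\alpha>0$) from a designated existing sample.

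The key step, and the main obstacle, is converting this separation into a \emph{finiteness} conclusion. Confinement to the trust region $\mathcal{T}^{\iter{n}}$ is what does it: after a successful exploitation the radius $\upsilon^{\iter{n}}$ is updated by~\eqref{eqn:trust-region-update}, and I want to show it cannot stay above the scale $\alpha$ forever while successful exploitations keep happening inside a bounded region. More carefully, I would argue that within any fixed trust region of radius $\upsilon$, only finitely many points mutually separated by $\alpha$ can be packed (a standard volume/packing bound in $\mathbb{R}^D$), so infinitely many successful exploitations would require the center $\bm{x}^{*\iter{n}}$ to keep moving — but each move of the center corresponds to a genuine decrease of $z^{*\iter{n}}$ by at least... this is where care is needed, because as noted the sampled point may be infeasible. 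I expect the intended argument is that the improvement threshold plus the trust-region shrinkage~\eqref{eqn:trust-region-update} together force: either $z^{*\iter{n}}$ strictly decreases infinitely often (impossible, since $z^{*\iter{n}} \geq z^*$ is bounded below by Assumption~\ref{ass:existence} and each decrease is by at least a quantized amount $\alpha\underline{\gamma}$), or the trust region shrinks to $\underline{\upsilon}$ and then the $\alpha$-separation of successive exploitation points inside a ball of radius $\underline{\upsilon}$ caps their number — after which~\eqref{eqn:exploit-cond} must fail and exploration is triggered. Assembling these cases yields that mode~$\theta$ terminates after finitely many iterations.
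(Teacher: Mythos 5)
Your plan stalls exactly where you flag it, and the two patches you propose there do not hold up. First, the ``quantized decrease'' claim is false: the improvement test \eqref{eqn:exploit-cond} constrains the lower bound $\underline{f}^{\iter{n}}(\itrpt{x}{n}{\theta})$, not the realized value $z^{(n+1)}\geq \underline{f}^{\iter{n}}(\itrpt{x}{n}{\theta})$ (as you yourself note earlier in the proposal), so a new feasible best sample may improve on $z^{*\iter{n}}$ by an arbitrarily small amount; hence ``infinitely many decreases of $z^{*\iter{n}}$, each by at least $\alpha\underline{\gamma}$, is impossible'' cannot be the first horn of your dichotomy. Second, the $\alpha$-separation you derive from \eqref{eqn:f-lower-bounds} holds only with respect to the current best sample (more generally, to samples whose value is at least $z^{*\iter{n}}$), not mutually among the exploitation points: a successful exploitation sample can turn out unfeasible with a cost value well below $z^{*\iter{n}}$, and later exploitation candidates may then lie arbitrarily close to it while still passing \eqref{eqn:exploit-cond}. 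The only mechanism that excludes a neighborhood of such an unfeasible sample is the estimated-constraint condition \eqref{eqn:exploit-constraint} (a ball of radius on the order of $|c_s^{(n+1)}|/\tilde{\rho}_s^{\iter{n+1}}$ becomes estimated-unfeasible), which your plan never invokes; and since that radius is not uniformly bounded below, a packing bound with the single radius $\alpha$ inside the trust region does not cap the number of successful exploitations. A further, smaller issue: by \eqref{eqn:trust-region-update} an infinite streak of improving exploitations would make the trust region expand toward $\overline{\upsilon}$, not shrink to $\underline{\upsilon}$, so that horn of the argument is also misdirected.

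For comparison, the paper does not use packing, monotonicity of $z^{*\iter{n}}$, or the trust region at all. It defines the exploitation-eligible set $\mathcal{E}^{\iter{n}} = \{\bm{x}\in\mathcal{X} : \underline{f}^{\iter{n}}(\bm{x}) < z^{*\iter{n}} - \alpha\tilde{\gamma}^{\iter{n}}\}$ and shows that every sampled exploitation point removes a ball of positive radius from it: radius $|c_s^{(n+1)}|/\tilde{\rho}_s^{\iter{n+1}}$ if the sample turns out unfeasible (via the estimated constraints), radius $\big(z^{(n+1)} - z^{*\iter{n}} + \alpha\tilde{\gamma}^{\iter{n}}\big)/\tilde{\gamma}^{\iter{n+1}}$ if it is feasible but non-improving, and an $\alpha$-ball (together with a shrink of the threshold) if it becomes the new best; compactness of $\mathcal{X}$ then yields $\mathcal{E}^{\iter{n}}=\varnothing$ after finitely many such samples, after which \eqref{eqn:exploit-cond} must fail. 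If you want to salvage your route, you would need to (i) drop the quantized-decrease claim, (ii) handle the samples that turn out unfeasible separately through \eqref{eqn:exploit-constraint}, and (iii) do the counting in $\mathcal{X}$ rather than in the trust region, at which point you have essentially reconstructed the paper's volume-exhaustion argument.
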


\begin{proof}
Mode~$\theta$ ends, and the algorithm switches to Mode~$\psi$, when any of the following occurs:

\begin{enumerate}
    \item[1)] No supposedly feasible point has been found so far: $\forall (\idxpt{x}{i}{},\idxval{z}{i}{},\idxpt{c}{i}{})\in \itrpt{X}{n}{}, \exists~ s \in [1,~S] : \idxval{c}{i}{s}<0$,
    \item[2)] A feasible sample has been observed, but no candidate points are inside the estimated feasible region: $\nexists ~\bm{x} \in \itrpt{E}{n}{} : \tilde{g}_s(\bm{x}) \geq 0, s = 1, \ldots, S$,
    \item[3)] The chosen candidate point $\itrpt{x}{n}{\theta}$ fails the expected improvement test \eqref{eqn:exploit-cond}.
\end{enumerate}

\noindent It is enough to show that at least one of these three conditions occurs in finite iterations to prove the Lemma. In particular, in a way similar to \cite{Sabug2020} (Lemma 2), we can show that this applies to the third case. 

At iteration $n$, consider $\mathcal{E}^{\iter{n}} \subset \mathcal{X}$ as the region eligible for an exploitation, i.e.,

\[
  \mathcal{E}^{\iter{n}} \triangleq \left\{ \bm{x} \in \mathcal{X} ~:~ \underline{f}^{\iter{n}}(\bm{x}) < z^{*\iter{n}} - \alpha\tilde{\gamma}^{\iter{n}} \right\}.
\]

\noindent Whenever any $\bm{x}^{(n+1)} \in \itrval{\mathcal{E}}{n}{}$ passes the test \eqref{eqn:exploit-cond} and is thus sampled at iteration $n+1$, there are three possible situations:

\begin{enumerate}
    \item[1)] $\exists s \in [1,~S] : \idxval{c}{n+1}{s}<0$ (the new sample turns out to be unfeasible). Due to the violation of at least one constraint $g_s$, there is a region $\mathcal{B}(\bm{x}^{(n+1)},r_s)$, which is a ball around $\bm{x}^{(n+1)}$ with radius 

    \[
      r_s \triangleq \frac{\left|c^{(n+1)}_s\right|}{\tilde{\rho}^{\iter{n+1}}},
    \]
    
    such that $\forall \bm{x} \in \mathcal{B}(\bm{x}^{(n+1)},r_s), \tilde{g}_s(\bm{x}) < 0$, implying estimated violation w.r.t. $g_s$ and the exclusion from our eligible region for exploitation, i.e.
    
    \[
      \mathcal{E}^{\iter{n+1}} = \mathcal{E}^{\iter{n}} \setminus \mathcal{B}(\bm{x}^{(n+1)},r_s).
    \]
    
    \noindent This means that a region at least the size of the ball is removed from $\mathcal{E}$, which is valid even when $\tilde{\rho}^{\iter{n}}_s$ updates (with an increase or decrease), because $\underline{\rho}_s \leq \tilde{\rho}^{\iter{n}}_s \leq \rho_s$.

    \item[2)] $z^{(n+1)} \geq z^{*\iter{n}}$ (the new sample does not improve over the best one). In this case, we consider $\mathcal{B}(\bm{x}^{(n+1)},r_\theta)$ with 

    \[
      r_\theta \triangleq \frac{z^{(n+1)} - (z^{*\iter{n}} - \alpha \tilde{\gamma}^{\iter{n}})}{\tilde{\gamma}^{\iter{n+1}}}.
    \]

    \noindent Then $\forall \bm{x} \in \mathcal{B}(\bm{x}^{(n+1)}, r_\theta)$ it holds that  $ \underline{f}^{\iter{n+1}}(\bm{x}) \geq z^{*\iter{n}} - \alpha \tilde{\gamma}^{\iter{n}}.$ Therefore all the points inside the hyper-ball are not eligible for exploitation at iteration $n+1$. Hence, we have $\mathcal{E}^{\iter{n+1}} = \mathcal{E}^{\iter{n}} \setminus \mathcal{B}(\bm{x}^{(n+1)},r_\theta)$, thus again reducing the volume of the set of candidates that are eligible for exploitation.
    
    \item[3)] $z^{(n+1)} < z^{*\iter{n}}$ (the new sample becomes the best one). In this case, the set of candidate points for exploration shrinks, i.e. $\mathcal{E}^{\iter{n+1}} \subset \mathcal{E}^{\iter{n}}$ due to the new threshold. Moreover, the hyper-ball $\mathcal{B}(\bm{x}^{(n+1)},r_\theta)$, with $r_\theta = \alpha$ around the new sample is also removed from $\mathcal{E}$. 
    \end{enumerate}
    
    Since in all the cases the volume of the set $\mathcal{E}^{\iter{n}}$ decreases by a finite quantity, we have that $\mathcal{E}^{\iter{n}} = \varnothing$ after finite iterations, and Mode~$\theta$ will fail, proving the lemma.
\end{proof}

% \begin{remark}
%     In the presentation of the above proof, there is no mention of the estimated noise $\itrval{\tilde{\epsilon}}{n}{f}$. Because the update of $\itrval{\tilde{\epsilon}}{n}{f}$ applies to all sample points $\idxpt{x}{i}{}$ when calculating for $\itrval{\underline{f}}{n}{}(\bm{x})$ (see \eqref{eqn:f-lower-bounds}), any increase or decrease to such an estimate will only move the lower bounds surface directly downwards. Hence, all results discussed above still hold valid.
% \end{remark}

The following lemma, which is essential in building our convergence theorem, proves that the exploration will generate an increasingly dense distribution of points throughout the search space.

\begin{lemma}
\label{lemma:dense-pts}
    Assume that Mode~$\psi$ (exploration) is called infinitely often as $n \rightarrow \infty$. Then, for any $\hat{\bm{x}}\in\mathcal{X}$ and any $\sigma > 0$, $\exists n_\sigma < \infty$ such that
    
    \[ 
        \min_{\bm{x}^{(i)} \in \bm{X}^{\iter{n_\sigma}}} \| \bm{x}^{(i)} - \hat{\bm{x}} \| < \sigma. 
    \]
    
\end{lemma}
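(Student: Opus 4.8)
The plan is to argue by contradiction, combining two ingredients: that the age term in the exploration merit forces \emph{every} candidate point to be eventually sampled, and that the candidate set generated by the rules of Section~\ref{subsec:cdpt-generation} becomes dense in $\mathcal{X}$. Concretely, suppose the claim fails, so there exist $\hat{\bm{x}}\in\mathcal{X}$ and $\sigma>0$ with $\|\bm{x}^{(i)}-\hat{\bm{x}}\|\geq\sigma$ for all $\bm{x}^{(i)}\in\bm{X}^{\iter{n}}$ and all $n$; I will show this is incompatible with the two ingredients, under the standing assumption that Mode~$\psi$ is invoked infinitely often.

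\emph{Step 1 (the age term forces sampling).} I would first show that any candidate point $\bm{c}$ that is ever created, say at iteration $m$, is selected by exploration --- hence sampled --- within finitely many iterations. The argument mirrors the density lemma of \cite{Sabug2020}: $h^{\iter{n}}$ is uniformly bounded, say $0\le h^{\iter{n}}(\bm{x})<M<\infty$, whereas $k$ is class-$\mathcal{K}_\infty$, so if $\bm{c}$ were never sampled its merit would satisfy $\xi_\psi^{\iter{n}}(\bm{c})\geq k(n-m)\to+\infty$. Since at each exploration step the sampled point maximizes $\xi_\psi^{\iter{n}}$ over $\bm{E}^{\iter{n}}$, the merits of the sampled points also diverge, and since $\xi_\psi^{\iter{n}}<M+k(\tau^{\iter{n}})$ their ages $\tau^{\iter{n}}$ must diverge as well; but only finitely many candidate points have age exceeding any fixed threshold at any given iteration, and a candidate is selected at most once, so this is impossible and $\bm{c}$ must be sampled after a finite number of iterations.

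\emph{Step 2 (the candidate set becomes dense).} Next I would show that the closure $K$ of $\bigcup_{n}\bm{X}^{\iter{n}}$ equals $\mathcal{X}$. Criterion~2 of the generation rule grids, at relative granularity $1/B$, the segment between every pair of samples; together with Step~1, which guarantees that these interpolated points are in turn sampled, a recursive subdivision argument shows that the samples become dense on the segment joining any two of them (the largest gap contracts by a factor at least $1/B$ per round of subdivision), so $K$ is closed under taking the segment between any two of its points. Criterion~1 grids the longest axis-aligned chords of $\mathcal{X}$ emanating from each sample; invoking Step~1 again, the closure $K$ contains, with each of its points $\bm{p}$, the full axis-aligned chords of $\mathcal{X}$ through $\bm{p}$. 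A set $K\subseteq\mathcal{X}$ that contains $\bm{x}^{(1)}$ and is closed under ``move along any coordinate direction until $\partial\mathcal{X}$'' intersects $\mathrm{int}\,\mathcal{X}$ in a set that is both open and relatively closed there; since $\mathcal{X}$ is convex and compact, $\mathrm{int}\,\mathcal{X}$ is connected and $\overline{\mathrm{int}\,\mathcal{X}}=\mathcal{X}$, so $K=\mathcal{X}$.

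\emph{Step 3 and main obstacle.} Given $\hat{\bm{x}}$ and $\sigma$, Step~2 produces a candidate point $\bm{c}\in\bm{E}^{\iter{m}}$ for some finite $m$ with $\|\bm{c}-\hat{\bm{x}}\|<\sigma$; by Step~1 this $\bm{c}$ is sampled at some finite iteration $n_\sigma$, whence $\min_{\bm{x}^{(i)}\in\bm{X}^{\iter{n_\sigma}}}\|\bm{x}^{(i)}-\hat{\bm{x}}\|\leq\|\bm{c}-\hat{\bm{x}}\|<\sigma$, contradicting the standing hypothesis and proving the lemma. I expect the main obstacle to be Step~2: one must verify that iterating criteria~1--2 really fills $\mathcal{X}$ densely rather than producing a fixed-resolution lattice, carefully tracking how the between-pair gridding refines the net, and one must handle boundary or degenerate configurations (for instance $\bm{x}^{(1)}\in\partial\mathcal{X}$, or a search set with empty interior). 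Step~1 is comparatively routine given \cite{Sabug2020}, the only delicate point being to exclude re-selection of an already-sampled point so that the ``each candidate selected at most once'' counting applies.
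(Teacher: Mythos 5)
Your overall route is genuinely different from the paper's. The paper argues locally: around $\hat{\bm{x}}$ it constructs an empty half-ball $\mathcal{H}$ from the nearest sample $\overline{\bm{x}}$ and an auxiliary point $\tilde{\bm{x}}$, shows that at least one candidate point must lie in $\mathcal{H}$ (segment gridding or coordinate directions), shows via the age term that this single candidate is explored in finite time, and then iterates the construction so that $\|\tilde{\bm{x}}^{\iter{n}}-\bar{\bm{x}}^{\iter{n}}\|\rightarrow 0$, invoking Lemma~5 of \cite{Sabug2020} to conclude. You instead prove two global statements: every candidate ever generated is eventually sampled (Step~1), and the candidate/sample set is dense in $\mathcal{X}$ (Step~2, via segment-closedness, coordinate chords and connectedness of $\mathrm{int}\,\mathcal{X}$). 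If completed this is stronger than the lemma, but it shifts all the geometric burden into Step~2 (recursive subdivision, limit points, boundary and degenerate cases), which the paper sidesteps entirely by only ever needing \emph{one} candidate inside \emph{one} empty half-ball.

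The concrete gap is the closing inference of Step~1. From ``the merits of the explored points diverge'' you deduce ``their ages diverge,'' and then claim a contradiction because only finitely many candidates exceed any fixed age threshold at a given iteration and each is selected at most once. That count is finite at each iteration but grows without bound in $n$, and divergence of the ages $\tau^{\iter{n}}$ does not bound the generation times $n-\tau^{\iter{n}}$ of the selected points; nothing in your argument excludes, say, that the point explored at iteration $n$ was generated around iteration $n/2$, so no contradiction follows as stated. What is needed, and what the paper supplies, is a bound \emph{uniform in $n$} on which candidates can ever outrank the fixed candidate: a newly created candidate has age zero and merit at most $\overline{h}$, so after $n_e\leq k^{-1}\!\left((\overline{h}-\underline{h})/\psi\right)$ iterations past the creation of your $\bm{c}$ no newly generated candidate can exceed its merit, leaving only the finitely many ($M_>$) candidates created up to that time, which infinitely many explorations exhaust. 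With that quantitative step (which, like the paper's own argument, tacitly uses that the age increment keeps dominating the offset $\overline{h}-\underline{h}$ at all later times --- automatic for the default linear age term), your Steps~1 and~3 go through; Step~2 then remains the part requiring substantial work beyond anything in the paper's proof, including the re-selection issue you correctly flag.
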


\begin{proof}
Consider any point $\hat{\bm{x}} \in \mathcal{X} \setminus \itrpt{X}{n}{}$ and its nearest sample

\begin{equation}
    \overline{\bm{x}} = \argmin_{\bm{x} \in \itrpt{X}{n}{}} \| \bm{x} - \hat{\bm{x}} \|,
\end{equation}

\noindent and define

\begin{equation}
    \mathcal{Q} \triangleq \left\{ \bm{x} \in \mathcal{X} ~\Big|~ (\bm{x} - \overline{\bm{x}})^\top(\hat{\bm{x}} - \overline{\bm{x}}) > 0 \right\}.
\end{equation}

% \noindent We encounter two cases, which we discuss below.

% \subsubsection*{First case: when $\mathcal{Q} \cap \itrpt{X}{n}{} \neq \varnothing$} 

Moreover, consider the point

\begin{equation}
    \label{eqn:tilde-x}
    \tilde{\bm{x}} \triangleq 
    \begin{cases}
    \argmin_{\bm{x} \in \mathcal{Q} \cap \itrpt{X}{n}{}} \| \bm{x} - \hat{\bm{x}} \| & \text{if }\mathcal{Q} \cap \itrpt{X}{n}{} \neq \varnothing, \\
    \argmax_{\bm{x} \in \mathcal{Q}} \|\bm{x} - \overline{\bm{x}}\| & \text{otherwise.}
    \end{cases}
\end{equation}

\noindent By construction, we have an empty half-ball 

\begin{equation}
\mathcal{H} = \mathcal{Q} \cap \mathcal{B}(\overline{\bm{x}}, \| \overline{\bm{x}} - \tilde{\bm{x}} \|)
\end{equation}

\noindent and we show that there will be a candidate point sampled within $\mathcal{H}$ after finite iterations. Due to the mechanism described in the algorithm, existence of candidate points within $\mathcal{H}$ is guaranteed in both cases for $\tilde{\bm{x}}$ in \eqref{eqn:tilde-x}. In the first case ($\mathcal{Q} \cap \itrpt{X}{n}{} \neq \varnothing$), this is due to those along the segment between $\overline{\bm{x}}$ and $\tilde{\bm{x}}$. For the other case, candidate points are still guaranteed to exist because of the ones generated in the coordinate directions. Now, we assume for the sake of contradiction that points within $\mathcal{H}$ are never sampled. 

We consider a candidate point $\bm{e} \in \mathcal{H}$. Furthermore, consider the ranking of candidate points in $\itrpt{E}{n}{}$, in decreasing order of the merit function  $\xi_\psi(\bm{x})$:

\[
    \itrpt{M}{n}{} \triangleq \left\{ \itrpt{m}{n}{1}, \itrpt{m}{n}{2}, \itrpt{m}{n}{3},\ldots \right\}.
\]

\noindent We recall the definition of $\xi_\psi(\bm{x})$ in \eqref{eqn:explore-merit} and the finite-boundedness of $h(\bm{x})$, 

\[
    \underline{h} \leq \itrval{h}{n}{}(\bm{x}) \leq \overline{h}.
\]

% \noindent Assume that $\bm{e}$ is generated at a given iteration $n_g$. We now show that starting from iteration $n_g + n_e < +\infty$, no newly-generated candidate point  $\bm{e}'$ will be ranked  to the left of $\bm{e}$, i.e. having a  higher starting merit value  $\xi_\psi(\bm{e}')$ than $\bm{e}$.

Assume that $\bm{e}$ is generated at iteration $n_g$. We first calculate the number of iterations $n_e$ such that from iteration $n_g+n_e$, no candidate points will be generated with merit greater than $\xi_\psi(\bm{e})$. Denoting a candidate point generated at $n_g+n_e$ as $\bm{e}'$, and noting that  $\itrval{\xi}{n_g+n_e}{\psi}(\bm{e}') \leq \overline{h}$ (age of $\bm{e}'$ is zero at time of generation) and $\itrval{\xi}{n_g+n_e}{\psi}(\bm{e}) \geq \underline{h}+\psi k(n_e)$, it applies that

% However, in addition, we should ensure that at iteration $n_g+n_e$, there will be no new candidate points that are generated to the left of $\bm{e}$. To calculate for $n_e$ that satisfies this condition, we consider a candidate point $\bm{e}'$ generated at $n_g+n_e$. Considering the best case that $\itrval{\xi}{n_g+n_e}{\psi}(\bm{e}')=\overline{h}$ (age of $\bm{e}'$ is zero at time of generation), we have

\begin{equation}
\label{eqn:n-e}
    n_e \leq k^{-1}\left( \frac{\overline{h} - \underline{h}}{\psi}\right),
\end{equation}

\noindent where $k^{-1}$ is the inverse function of $k$. Now we count the number $M_>$ of candidate points in $\itrpt{M}{n_g+n_e}{}$ with greater merit than $\itrval{\xi}{n_g+n_e}{\psi}(\bm{e})$. The upper bound for such a number is when all generated candidate points from iteration $n_g$ to $n_g+n_e$ have merits exceeding that of $\bm{e}$, hence, recalling the candidate points generation mechanism, we have that

\[
    M_> \leq \left| \itrpt{M}{n_g}{} \right| + (B-1)\sum_{n=n_g}^{n_g+n_e} n\left(2D+\frac{n-1}{2}\right) < \infty.
\]

\noindent where $|\cdot|$ is the set cardinality operator. This implies that $\bm{e}$ emerges in front of the ranking $\itrpt{M}{n_g+M_>}{}$, and will be taken at the next exploration after $n_g+M_>$ iterations, falling into contradiction. After sampling $\bm{e} \in \mathcal{H}$, $\mathcal{H}$ can be redefined using $\tilde{\bm{x}} \leftarrow \bm{e}$, and the same reasoning is repeated to show that the considered $\mathcal{H}$ shrinks, and

\begin{equation}
\label{eqn:half-ball-shrinks}
    \lim\limits_{n\rightarrow\infty}\| \tilde{\bm{x}}^{\iter{n}} - \bar{\bm{x}}^{\iter{n}} \| = 0.
\end{equation}

From \eqref{eqn:half-ball-shrinks}, and using the arguments in Lemma~5 of \cite{Sabug2020}, it applies that
\[ 
    \lim\limits_{n \rightarrow \infty}\| \bar{\bm{x}}^{\iter{n}} - \hat{\bm{x}} \| = 0,
\]
\noindent and, for any $\sigma>0$, we have
\[
    \exists n_\sigma: \min_{\bm{x}^{(i)} \in \bm{X}^{\iter{n_\sigma}}} \| \bm{x}^{(i)} - \hat{\bm{x}} \| < \sigma,
\]

\noindent which completes the proof.
\end{proof}

% \subsubsection*{Second case: if $\mathcal{Q} \cap \itrpt{X}{n}{} = \varnothing$}
% Candidate points are guaranteed to exist in the half-space $\mathcal{Q}$ due to their manner of generation in the basic directions. Selecting any candidate point $\bm{e} \in \mathcal{Q} \cap \itrpt{E}{n}{}$, the same contradiction-based argument (assume that no internal point within $\mathcal{Q}$ is sampled) can be used to show that $\bm{e}$ will be sampled. After any sample within $\mathcal{Q}$, we go back to the previous case ($\mathcal{Q} \cap \itrpt{X}{n}{} \neq \varnothing$) to show that a sample will be generated nearer to $\hat{\bm{x}}$ than its previous nearest point $\overline{\bm{x}}$ in finite iterations.

Given the above lemmas, we are now ready to present the main convergence result for SMGO-$\Delta$.

\begin{theorem}
\label{theorem:optim-gap}
Let Assumptions \ref{ass:lipschitz} hold. Then,
$\forall \varepsilon>0,\;\exists n_\varepsilon<\infty: z^{*\iter{n_\varepsilon}} \leq z^* + \varepsilon$.
\end{theorem}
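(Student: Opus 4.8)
The plan is to reduce the theorem to the two lemmas already established. The first step is to note that exploration (Mode~$\psi$) must be invoked infinitely often: if it were invoked only finitely many times, then past some finite iteration every step would be an exploitation step (Mode~$\theta$), contradicting Lemma~\ref{lemma:exploit-will-fail}. Thus the hypothesis of Lemma~\ref{lemma:dense-pts} is satisfied, and the sampled points become dense in $\mathcal{X}$: for every $\hat{\bm{x}}\in\mathcal{X}$ and every $\sigma>0$ there is a finite $n_\sigma$ with a sample $\bm{x}^{(i)}\in\bm{X}^{\iter{n_\sigma}}$ obeying $\|\bm{x}^{(i)}-\hat{\bm{x}}\|<\sigma$. (As stated, the theorem cites only Assumption~\ref{ass:lipschitz}, but Assumption~\ref{ass:existence} is implicitly needed for $z^{*}$ and $z^{*\iter{n}}$ to be meaningful, so I would invoke both.)

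The second step converts spatial density into the cost bound. Fix $\varepsilon>0$ and suppose we have a point $\bar{\bm{x}}$ that is both robustly feasible -- $\mathcal{B}(\bar{\bm{x}},r)\subseteq\mathcal{G}$ for some $r>0$ -- and near-optimal -- $f(\bar{\bm{x}})\le z^{*}+\varepsilon/2$. Let $\gamma$ be the true (finite, though unknown) Lipschitz constant of $f$; using it in the analysis is legitimate since we only claim existence of $n_\varepsilon$. Put $\sigma=\min\{r,\varepsilon/(2\gamma)\}$ and apply Lemma~\ref{lemma:dense-pts} with $\hat{\bm{x}}=\bar{\bm{x}}$: we obtain a sample $\bm{x}^{(i)}\in\bm{X}^{\iter{n_\sigma}}$ with $\|\bm{x}^{(i)}-\bar{\bm{x}}\|<\sigma\le r$, hence $\bm{x}^{(i)}\in\mathcal{G}$, i.e. $c^{(i)}_s\ge 0$ for all $s$, so $\bm{x}^{(i)}$ is a \emph{feasible} sample. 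Lipschitz continuity then gives $z^{(i)}=f(\bm{x}^{(i)})\le f(\bar{\bm{x}})+\gamma\sigma\le z^{*}+\varepsilon$, and since $\bm{x}^{*\iter{n}}$ is by \eqref{eqn:best-sample} the feasible sample of minimal cost, $z^{*\iter{n_\sigma}}\le z^{(i)}\le z^{*}+\varepsilon$; taking $n_\varepsilon=n_\sigma$ concludes. This also discharges the earlier claim that a feasible point is sampled in finite iterations, so $\bm{x}^{*\iter{n}}$ eventually exists.

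The main obstacle is securing the point $\bar{\bm{x}}$ used above. Denseness holds in $\mathcal{X}$, not in $\mathcal{G}$; if $\mathcal{G}$ had empty interior (a single point, a lower-dimensional set), a dense sequence could cluster at $\bm{x}^{*}$ without ever producing a feasible sample, and $z^{*\iter{n}}$ would never be defined -- so Assumption~\ref{ass:existence} alone is not quite enough. I would close this by adding the mild regularity that $\mathcal{G}$ has nonempty interior and that the optimum is approachable from it, i.e. $z^{*}=\inf\{f(\bm{x}):\bm{x}\in\operatorname{int}\mathcal{G}\}$ (which holds, e.g., whenever $\mathcal{G}$ equals the closure of its interior): then for each $\varepsilon$ one selects $\bar{\bm{x}}\in\operatorname{int}\mathcal{G}$ with $f(\bar{\bm{x}})\le z^{*}+\varepsilon/2$, and openness yields the required $r>0$. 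Alternatively one states the guarantee with respect to this interior infimum. Everything else -- the exploitation/exploration dichotomy and the one-line Lipschitz estimate -- is routine.
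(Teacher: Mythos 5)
Your route is essentially the paper's: invoke Lemma~\ref{lemma:exploit-will-fail} to conclude that exploration is called infinitely often, feed that into Lemma~\ref{lemma:dense-pts} to get density of the samples, and finish with a one-line Lipschitz estimate using the true (unknown) constant $\gamma$ and a radius of order $\varepsilon/\gamma$. The difference is in how the constraint is handled. The paper's proof simply sets $\hat{\bm{x}}=\bm{x}^*$, takes $\sigma=\varepsilon/\gamma$, and writes $z^{*\iter{n_\sigma}}\leq f(\bar{\bm{x}}^{\iter{n_\sigma}})$ for the nearest sample $\bar{\bm{x}}^{\iter{n_\sigma}}$ to $\bm{x}^*$, without verifying that this nearby sample is \emph{feasible} (indeed, in the proof $\bm{x}^*$ is even written as a minimizer over $\mathcal{X}$ rather than $\mathcal{G}$); feasibility of the best sample is what the definition \eqref{eqn:best-sample} requires for that inequality, and density in $\mathcal{X}$ alone does not supply it when $\bm{x}^*$ sits on the boundary of $\mathcal{G}$ or $\mathcal{G}$ has empty interior. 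You spotted exactly this and patched it by anchoring the argument at a robustly feasible near-optimal point $\bar{\bm{x}}$ with $\mathcal{B}(\bar{\bm{x}},r)\subseteq\mathcal{G}$ and $\sigma=\min\{r,\varepsilon/(2\gamma)\}$, which also shows that a feasible sample (hence $\bm{x}^{*\iter{n}}$ itself) appears in finite time. The price is an extra regularity hypothesis ($\mathcal{G}$ with nonempty interior and $z^*$ approachable from $\operatorname{int}\mathcal{G}$) that the paper does not state; the benefit is that your version is airtight where the paper's argument, read literally under the constrained definition of $z^{*\iter{n}}$, has a silent gap. So: same decomposition and same key lemmas, but your handling of feasibility is more careful than the published proof, at the cost of a mild additional assumption (or a restatement of the guarantee in terms of the interior infimum).
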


\begin{proof}
Consider a point $\bm{x}^* \in \left\{ \bm{x}' ~\Big|~ \bm{x}' = \argmin_{\bm{x} \in \mathcal{X}} f(\bm{x})\right\}$ and take $\sigma = \dfrac{\varepsilon}{\gamma}$. For any $n$, denote
\[
\bar{\bm{x}}^{\iter{n}}=\arg\min\limits_{\bm{x}^{(i)} \in \bm{X}^{\iter{n}}} \| \bm{x}^* - \bm{x}^{(i)} \|
\]
In virtue of Lemma \ref{lemma:exploit-will-fail}, the exploration mode will be called infinitely often. Now, by applying Lemma \ref{lemma:dense-pts}, we have that $\exists n_\sigma < \infty :  \| \bm{x}^* -\bar{\bm{x}}^{\iter{n_\sigma}}\| ~<~ \sigma=\dfrac{\varepsilon}{\gamma}$. 
Then, in virtue of Assumption \ref{ass:lipschitz} we have:
\[
f(\bm{x}^{*\iter{n_\sigma}})-f(\bm{x}^*)=z^{*\iter{n}}-z^*\leq f(\bar{\bm{x}}^{\iter{n_\sigma}})-f(\bm{x}^*)\leq\gamma\|\bm{x}^* - \bar{\bm{x}}^{\iter{n_\sigma}} \|<\varepsilon
\]
Thus proving the result with $n_\varepsilon=n_\sigma$.
\end{proof}

\section{Extensions and Implementation Aspects}
\label{sec:implement-notes}

\subsection{On bounded noise and disturbances}

Our previous work \cite{Sabug2020CDC,Sabug2020} considered cases in which black-box functions evaluations are assumed exact and without noise. However, in practice, the values of the objective (resp. constraint) function can be affected by additive disturbance ${\epsilon}_f$ (resp. ${\epsilon}_s$) for any test point $\bm{x}\in\mathcal{X}$, which we assume to be bounded:

\begin{align}
\label{eqn:sampling-f-noise}
    \forall n, ~ & z^{(n)} = f(\idxpt{x}{n}{}) + \epsilon_f, |\epsilon_f| \leq \overline{\epsilon}_f \\
\label{eqn:sampling-g-noise}
    ~ & \idxval{c}{n}{s} = g_s(\idxpt{x}{n}{}) + \epsilon_s, |\epsilon_s| \leq \overline{\epsilon}_s, s = 1,\ldots,S.
\end{align}

\noindent In this case, we can derive estimates of the disturbance bounds, $\tilde{\epsilon}_f$ and $\tilde{\epsilon}_s$, from $\itrpt{X}{n}{}$, resorting to the method proposed in \cite{Fagiano2016}:

\begin{equation}
\small
    \label{eqn:epsilon-f-update-noise}
    \itrval{\tilde{\epsilon}}{n}{f} = \frac{1}{n} \sum_{i =1}^n \epsilon^{(i)}_f,
\end{equation}

\begin{equation}
\small
    \label{eqn:epsilon-s-update-noise}
    \itrval{\tilde{\epsilon}}{n}{s} = \frac{1}{n} \sum_{i =1}^n \epsilon^{(i)}_s,
\end{equation}

\noindent where

\[
    \epsilon^{(i)}_f = \max_{(\boldsymbol{x},z,\boldsymbol{c}) \in \idxpt{J}{i}{}} \Big| \idxval{z}{i}{} - z\Big|,
\]

\[
    \epsilon^{(i)}_s = \max_{(\boldsymbol{x},z,\boldsymbol{c}) \in \idxpt{J}{i}{}} \Big| \idxval{c}{i}{s} - c_s \Big|
\]

\noindent and 
\[
    \idxpt{J}{i}{} \triangleq \Big\{ (\boldsymbol{x},z,\boldsymbol{c})\in \itrpt{X}{n}{} ~\Big|~ \| \idxpt{x}{i}{}-\boldsymbol{x} \| \leq \nu \Big\}
\]

\noindent for a chosen (small) radius $\nu > 0$. %, and using a slight abuse of notation, referring to any sample as $\idxpt{x}{i}{} \in \itrpt{X}{n}{}$. 
As a rule of thumb, one can set $\nu = 0.1 ~d(\mathcal{X})$, where $d(\mathcal{X})$ is the diameter of the search space. If $\idxpt{J}{i}{} = \varnothing\,\forall i=1,\ldots,n$, one can increase $\nu$ and repeat the calculations. Furthermore, the algorithm now updates the Lipschitz constants' estimates $\tilde{\gamma}^{\iter{n}}$ and $\tilde{\rho}_s^{\iter{n}},\,s=1,\ldots,S$ (see Assumption \ref{ass:lipschitz}) as in \cite{Fagiano2016}:
\vspace{-0.125in}

\begin{equation}
\small
    \label{eqn:gamma-update-noise}
    \itrval{\tilde{\gamma}}{n}{} = \max_{(\idxpt{x}{i}{},\idxval{z}{i}{},\idxpt{c}{i}{}), (\idxpt{x}{j}{},\idxval{z}{j}{},\idxpt{c}{j}{})\in\itrpt{X}{n}{}} \left( 
    \begin{cases}
      \frac{|\idxval{z}{i}{}-\idxval{z}{j}{}|-2\itrval{\tilde{\epsilon}}{n}{f}}{\|\idxpt{x}{i}{}-\idxpt{x}{j}{}\|} & \text{if~} |\idxval{z}{i}{} - \idxval{z}{j}{}| \geq 2\itrval{\tilde{\epsilon}}{n}{f}\\
      \underline{\gamma} & \text{otherwise}
    \end{cases} \right),
\end{equation}

\begin{equation}
\small
    \label{eqn:rho-update-noise}
    \itrval{\tilde{\rho}}{n}{s} = \max_{(\idxpt{x}{i}{},\idxval{z}{i}{},\idxpt{c}{i}{}), (\idxpt{x}{j}{},\idxval{z}{j}{},\idxpt{c}{j}{})\in\itrpt{X}{n}{}} \left( 
    \begin{cases}
      \frac{|\idxval{c}{i}{s}-\idxval{c}{j}{s}|-2\itrval{\tilde{\epsilon}}{n}{s}}{\|\idxpt{x}{i}{}-\idxpt{x}{j}{}\|} & \text{if~} |\idxval{c}{i}{s} - \idxval{c}{j}{s}| \geq 2\itrval{\tilde{\epsilon}}{n}{s}\\
      \underline{\rho}_s & \text{otherwise}
    \end{cases} \right).
\end{equation}

\begin{remark}
    The theoretical results laid out in Lemmas~\ref{lemma:exploit-will-fail} and \ref{lemma:dense-pts} still hold valid even when considering the case with bounded noise, that is, the exploitation routine will end in finite iterations, and the algorithm will densely cover the search set. 
    %In fact, in the proof to Lemma~\ref{lemma:exploit-will-fail}, there is no mention of the estimated noise $\itrval{\tilde{\epsilon}}{n}{f}$.
    In fact, since the update of $\itrval{\tilde{\epsilon}}{n}{f}$ applies to all sample points $\idxpt{x}{i}{}$ when calculating $\itrval{\underline{f}}{n}{}(\bm{x})$ (see \eqref{eqn:f-lower-bounds}), any increase or decrease of such an estimate will only shift the lower bounds surface by a constant term. Hence, all results discussed above still hold valid. Furthermore, Lemma~\ref{lemma:dense-pts} is anchored to the candidate points generation and the age-based term in the exploration merit ranking, hence the dense points argument still holds.
\end{remark}

\subsection{On the choice of search directions}
The proposed candidate points generation scheme is simple and reproducible, but it may limit the possible directions to sample throughout the search space. Hence, we introduce, in addition to the candidate points generation routine described in Section \ref{subsec:cdpt-generation}, a pseudo-random points distribution throughout $\mathcal{X}$. As in \cite{Eriksson2020}, we use a $D$-dimensional Sobol sequence to generate such points.

In this case, the set of exploration candidate points $\itrpt{E}{0}{}$ is initialized as a finite set of $L$ points from the Sobol sequence (in this paper, we set $L=500$). From the first sample onward, $\itrpt{E}{n}{}$ is augmented with the new points as in Section \ref{subsec:cdpt-generation}. Note that these initial candidate points share the properties of all other succeeding ones:

\begin{itemize}
    \item fixed candidate point locations from the time of generation, and
    \item increasing age w.r.t. $n$,
\end{itemize}

\noindent which maintains the validity of all the theoretical properties discussed in Section~\ref{sec:algo-analysis}.

A similarly-generated set of $L$ pseudo-random candidate points is also used in exploitation to improve the coverage of the trust region. As $\mathcal{T}^{\iter{n}}$ expands or contracts, the pseudo-random points proportionally scale in the same manner. 

They not only increase the search directions in the exploitation step, but more importantly guarantee the existence of at least $L$ candidate points inside the thrust region $\mathcal{T}^{\iter{n}}$, even if it becomes small. The trust region filler points change locations w.r.t. iterations, depending on the size and location of $\mathcal{T}^{\iter{n}}$. Nevertheless, the arguments laid out in Lemma~\ref{lemma:exploit-will-fail} still hold true, because they do not require any assumption on the location of the exploitation candidate points. Furthermore, assuming the same seed in the pseudo-random candidate points generation, we still maintain the reproducibility property of SMGO-$\Delta$.

\subsection{On the computational complexity}
The computational complexity of the proposed algorithm SMGO-$\Delta$ is lower than that of the SMGO discussed in \cite{Sabug2020} regardless of consideration of additive uncertainty. This is mainly because of the candidate points' generation mechanism, which leads now to $\mathcal{O}(Dn + n^2)$, as compared with $\mathcal{O}(2^D + n^2)$ from \cite{Sabug2020CDC,Sabug2020} for a $D$-dimensional hyperrectangle search space.

In the noiseless case, the Lipschitz constant update can be done in an iterative manner as in \cite{Sabug2020}. Similarly, the routines to calculate the (upper- and lower-) bounds and the central estimates of $f(\bm{x})$ and $g_s(\bm{x})$ can take an iterative implementation.

When noise on the objective and/or constraints should be considered, the noise amplitudes estimation entail the recalculation of the radius $\nu$ with which the sample differences are calculated, hence this cannot be done iteratively in general. As the Lipschitz constant calculations depend on the estimated noise amplitudes, these also have to be repeated for each iteration. As both these processes require comparing each sample with all the other ones (and the comparison is linear w.r.t. $D$), these processes have complexity $\mathcal{O}(Dn^2)$, which is still polynomial.

The introduction of pseudo-random points as candidates for exploration does not change the complexity of the whole algorithm, since these are treated like all other candidate points in terms of the calculations required. On the other hand, the introduction of pseudo-random points within $\mathcal{T}^{\iter{n}}$ for exploitation requires the recalculation of \eqref{eqn:exploit} each time the best point or the size of the thrust region are modified. Nevertheless, the computations needed for a fixed number of points do not impact the complexity.

% An alternative approach is to introduce an adaptive tuning of $\psi$, i.e. it is now iteration dependent $\itrval{\psi}{n}{}$, such that it \itrval{\psi}{n}{} decreases in magnitude when the number of points $\itrval{M}{n}{\tau}$ chosen due to age exceeds a certain percent $\phi$ of the total existing samples. When we choose $\idxpt{x}{n}{\psi}$ as in \eqref{eqn:explore-merit}, we mark it as having been chosen due to age when

% \[
%     \psi ~k \left(\itrval{\tau}{n}{}(\idxpt{x}{n}{\psi})\right) > \itrval{h}{n}{}(\idxpt{x}{n}{\psi}),
% \]

% \noindent and correspondingly, $\itrval{M}{n}{\tau} = \itrval{M}{n-1}{\tau} + 1$. If $\itrval{M}{n}{\tau} \geq \phi n$, then we decrease the magnitude of $\psi$, i.e. $\itrval{\psi}{n}{} = 0.1 \itrval{\psi}{n-1}{}$. The convergence properties of SMGO-$\Delta$ will still hold as long as we clip $\itrval{\psi}{n}{}$ to a certain (very small) minimum $\underline{\psi}$. This $\underline{\psi}$ can be set for example to $1\times 10^{-9}$.

\section{Performance and Sensitivity Analysis}
\label{sec:perf-test}
We now analyze, in an illustrative low-dimensional problem, the behavior of SMGO-$\Delta$, focusing on its sensitivity w.r.t. the user-defined parameters involved.

\subsection{Illustrative example}
We consider the optimization of a 2D function with 2 constraints, see Tables~\ref{table:visual-fn-enum}-\ref{table:visual-fn-specs}, using $\alpha=0.005$ and $\Delta=0.25$. Fig.~\ref{fig:visual-contour} shows the search space with the objective function contours. The shaded regions are the respective feasible regions for $g_1$ and $g_2$, resulting in disjoint feasible regions composed of a circle and portions of circular strips. The global minimum is marked with an $\otimes$, while other local minima are shown with $\odot$.

% We first run SMGO-$\Delta$ with hyperparameters $\alpha=0.005$, $\beta=0.1$, $\Delta=0.25$, $\psi=1\times 10^{-6}$, $B=5$. 

The history of sampled values $\idxval{z}{n}{}$ and the best value $z^{*\iter{n}}$ is shown in Fig.~\ref{fig:visual-hist}, along with the sample feasibility and the operation mode taken. The $z^{*\iter{n}}$ update is solely done when a better \textit{feasible} sample is found. Hence, there were samples with improved costs but found in violation with one or more constraints, and did not change the previous $z^{*\iter{n}}$.

Fig.~\ref{fig:visual-pts-distrib} shows the resulting spread of samples in the search space. Mostly, the samples were acquired within the feasible area, and exhibited a concentration around the global optimal point. This is mainly due to a relatively low improvement threshold factor $\alpha$, leading to more exploitation points being taken (chosen from candidate points in the estimated feasible region). On the other hand, the exploration iterations were used to sample the candidate points within the feasible region, due to a low $\Delta$ parameter, leading to a conservative exploration behavior. However, notice that even with such conservatism, there were still a sizeable number of samples which fell outside the feasible region. This is due to the \textit{a priori} unknown noise bounds and Lipschitz constants for the constraints, both of which have to be estimated from data.

\begin{table*}[!t]
\small
    \begin{center}
    	\begin{tabular}{|c|>{\centering\arraybackslash}m{2in}|p{1.5in}|}
    		\hline
    		\textbf{Description} & \textbf{Function definition} & \textbf{Comments}  \parbox{0pt}{\rule{0pt}{2ex+\baselineskip}}\\ \hline
    		Objective $f(\bm{x})$ \parbox{0pt}{\rule{0pt}{5ex+\baselineskip}} & $\frac{1}{2} \sum_{i=1}^2 \left( x_i^4 - 16x_i^2 + 5x_i \right) + 80$ & Styblinski-Tang function with offset (multimodal, separable) \\ \hline
    		Constraint $g_1(\bm{x})$ \parbox{0pt}{\rule{0pt}{5ex+\baselineskip}} & $-4 + \Big\| ~\bm{x} - \begin{bmatrix} -2.90 & 2.90 \end{bmatrix}^\top\Big\|$ & Upwards-facing cone centered at $(-2.90, 2.90)$ \\ \hline
    		Constraint $g_2(\bm{x})$ \parbox{0pt}{\rule{0pt}{5ex+\baselineskip}} & $\mathrm{cos}\left( 2~\Big\| ~\bm{x} + \begin{bmatrix} 2.90 & 2.90 \end{bmatrix}^\top\Big\| \right)$ & Concentric ripples around $(-2.90, -2.90)$ \\ \hline
    	\end{tabular}
        \caption{Functions used for illustrative tests}
    	\label{table:visual-fn-enum}
    \end{center}
\end{table*}

\begin{table*}[!t]
\small
    \begin{center}
    	\begin{tabular}{|p{1.5in}|c|}
    		\hline
    		\textbf{Description} & \textbf{Value}\\ \hline
    		Search space $\mathcal{X}$ & $-5 \leq x_i \leq +5, i=1,2$ \\ \hline
    		Starting point $\idxpt{x}{1}{}$ & (0.4775, 0.0667) \\ \hline
    		Maximum iterations $N$ & 500 \\ \hline
    		$f(\bm{x})$ noise amplitude $\overline{\epsilon}_f$ & 0.25 (white noise) \\ \hline
    		$g_1(\bm{x})$ noise amplitude $\overline{\epsilon}_1$ & 0.1 (white noise) \\ \hline
    		$g_2(\bm{x})$ noise amplitude $\overline{\epsilon}_2$ & 0.05 (white noise) \\ \hline
    	\end{tabular}
        \caption{Test parameters for the illustrative example}
    	\label{table:visual-fn-specs}
    \end{center}
\end{table*}

\begin{figure}[!t]
	\centering
 	\includegraphics[width=0.625\columnwidth]{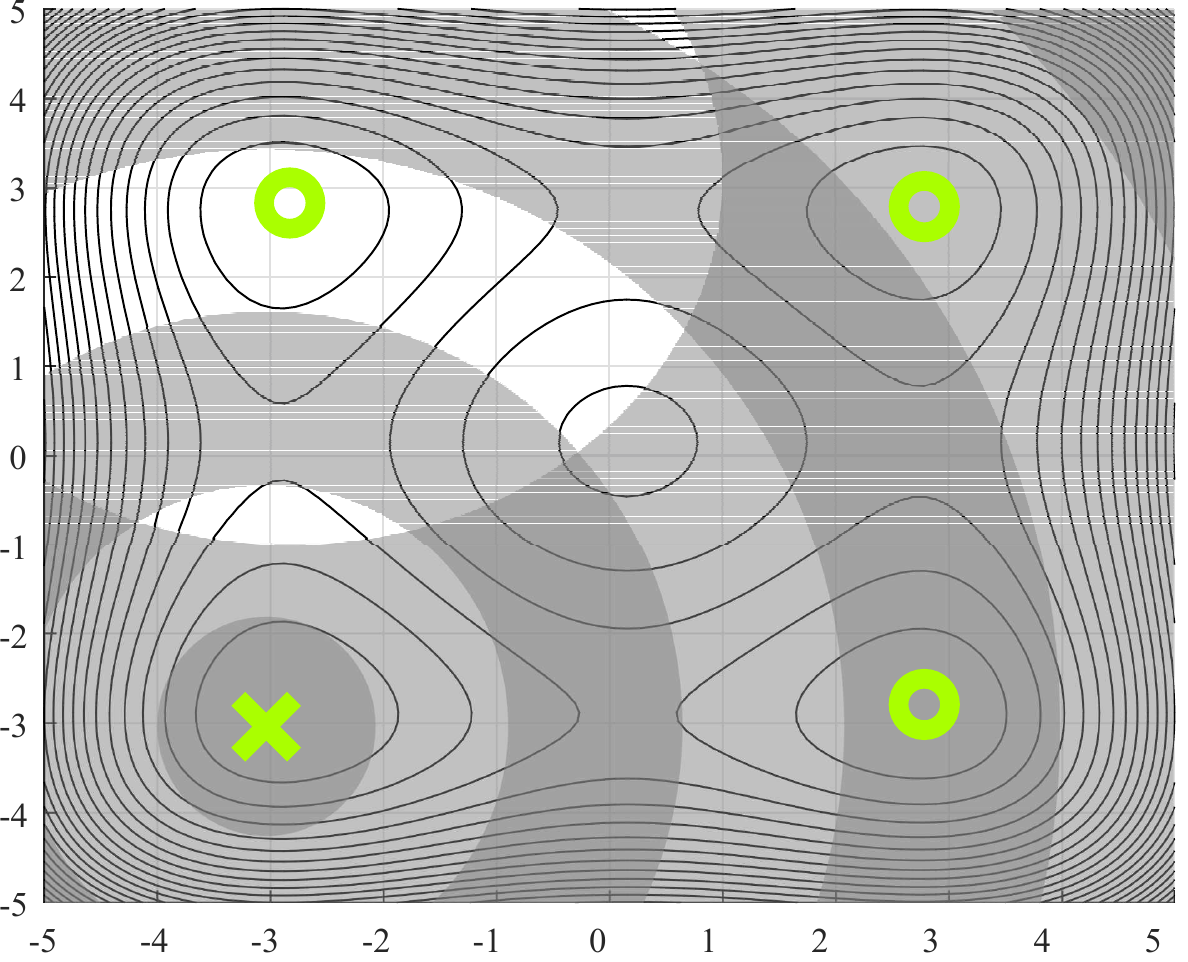}
	\caption{Illustrative example: contour plot of the objective function, \\ with constraint satisfaction regions}
	\label{fig:visual-contour}
\end{figure}

\begin{figure}[!t]
	\centering
 	\includegraphics[width=0.75\columnwidth]{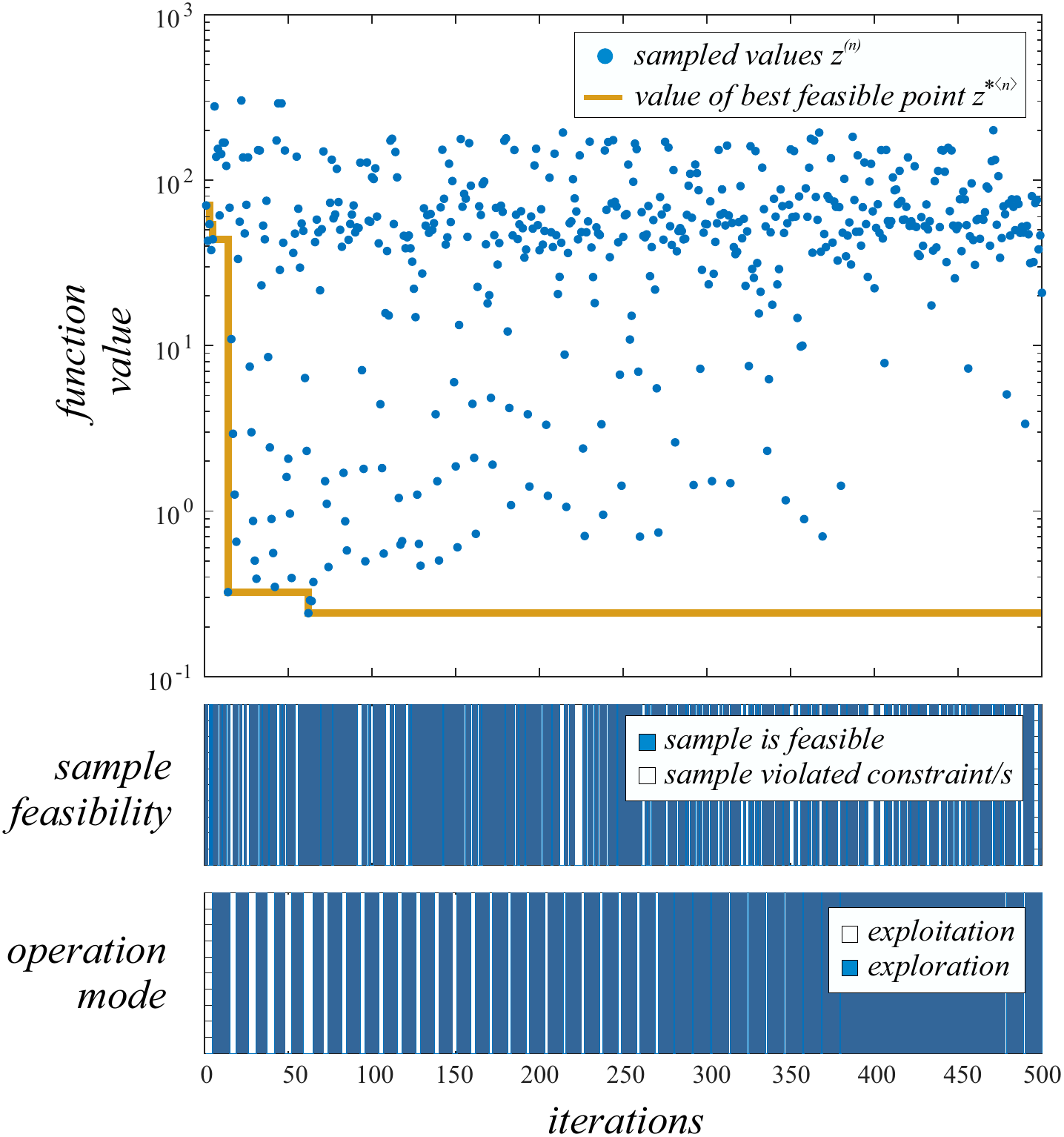}
	\caption{Illustrative example: sampled values history, best value history, test point feasibility, and SMGO-$\Delta$ mode taken (exploitation or exploration)}
	\label{fig:visual-hist}
\end{figure}

\begin{figure}[!t]
	\centering
 	\includegraphics[width=0.5\columnwidth]{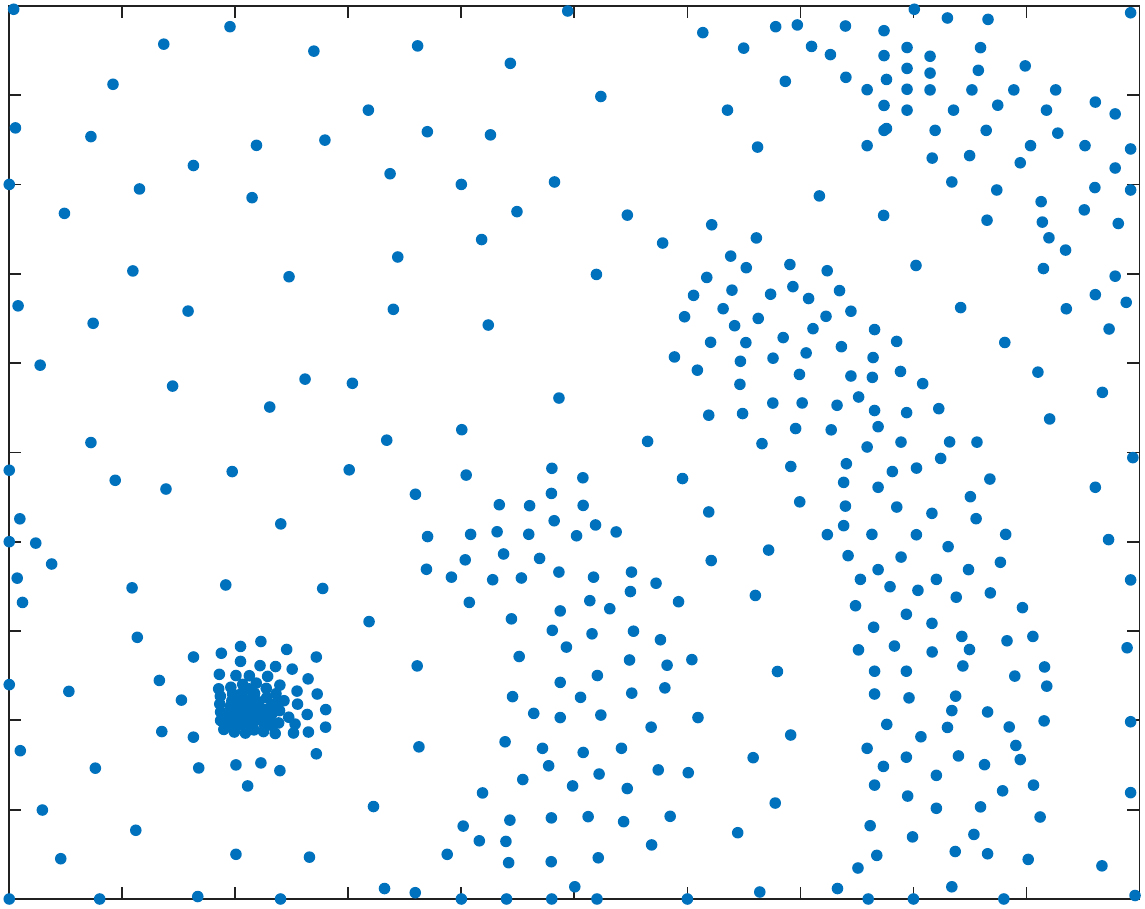}
	\caption{Illustrative example: resulting sample points distribution}
	\label{fig:visual-pts-distrib}
\end{figure}

% \begin{itemize}
%     \item Summary of test, describing the academic test function, constraints, with image of function contours with feasible regions
%     \item Summary table of used parameters in the optimization
%     \item Best value history (with sampled values, no sampled value means violated constraint/s)
%     \item Points distribution over search space (overlaid with feasible region boundaries)
%     \item Count of constraint violations over iterations (superimpose the two graphs for the two constraints)
% \end{itemize}

% \subsection{Influence of Improvement Threshold Parameter $\alpha$}

% \begin{itemize}
%     \item Describe in few sentences the expected effect of increasing the $\eta$ value
%     \item Grid graph of performance w.r.t. $\eta$ value and iterations (similar to CDC)
%     \item Comparison of points distribution
% \end{itemize}

\subsection{Influence of Risk Factor $\Delta$}

Now we show the effects of the $\Delta$ parameter on the exploration behavior of SMGO-$\Delta$. Since we want to focus on the exploration behavior, we set $\alpha=100$ to completely suppress the exploitation part of the algorithm. Furthermore, the $\Delta$ value is varied from 0.0 to 1.0, in increments of 0.25, while keeping all other hyperparameters constant. The resulting sample points distributions are shown in Fig.~\ref{fig:visual-delta}. In general, more points are sampled in the unfeasible region when using higher $\Delta$ value. However, for too small $\Delta$ (below 0.25 in this example), the algorithm is not able to discover part of the feasible region, such as the one containing $\bm{x}^*$. At the limit of $\Delta=1.0$, we can observe a quasi-uniform points distribution, however the samples density is higher in the feasible regions compared to those where constraints are violated.

\begin{figure}[!t]
	\centering
 	\includegraphics[width=\columnwidth]{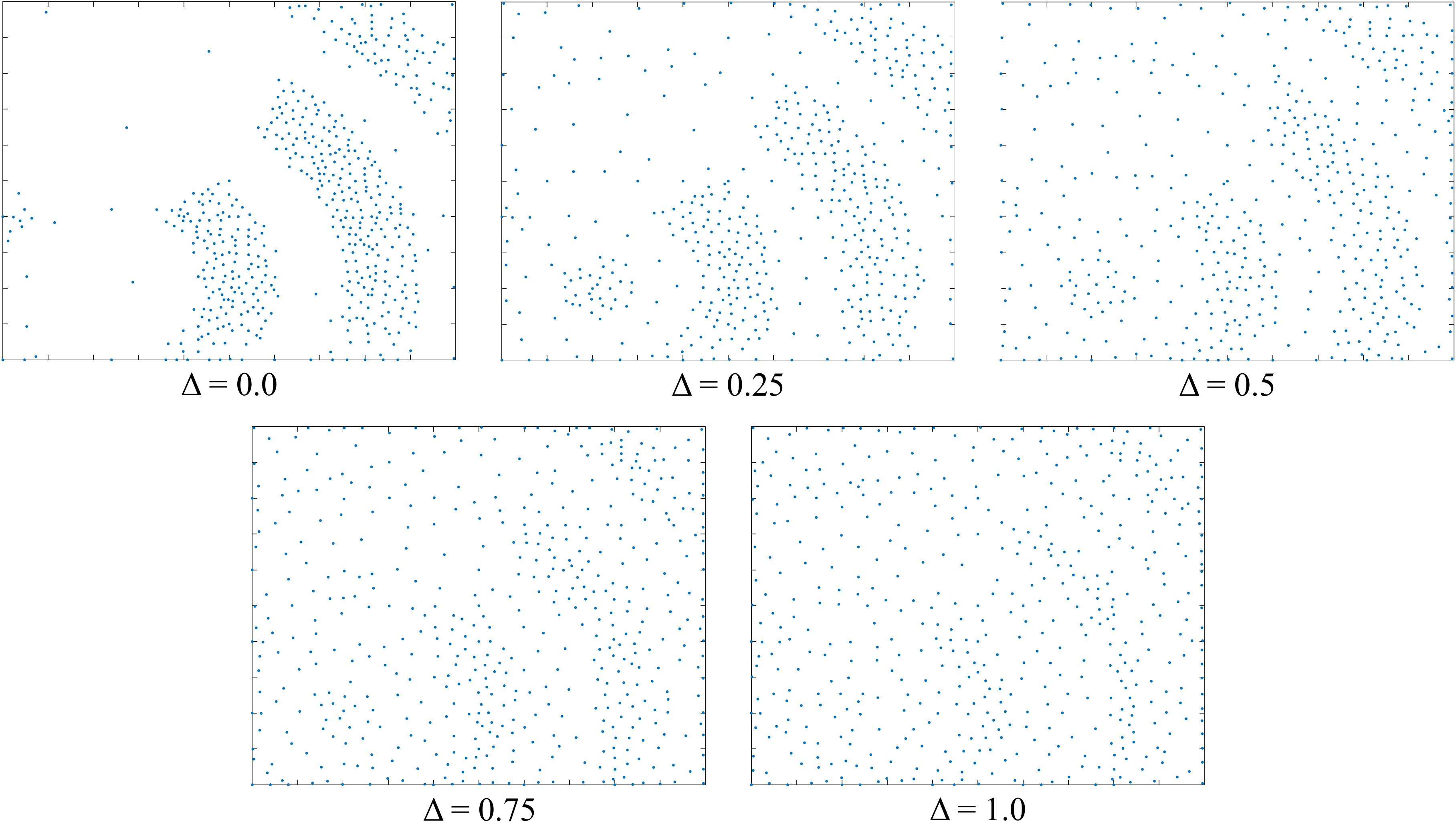}
	\caption{Illustrative example: results of varying the risk factor $\Delta$}
	\label{fig:visual-delta}
\end{figure}

% \begin{itemize}
%     \item Short description and expected effect
%     \item Grid graph of violations
%     \item Grid graph of performance (exactly like CDC)
%     \item Comparison of points distribution
% \end{itemize}

\subsection{Sensitivity w.r.t. $\alpha$ and $\phi$}
Several trends of the algorithm behavior are investigated when the main parameters are varied as follows:

\begin{itemize}
    \item $\alpha \in \big\{ 0.001,~ 0.0025,~ 0.005,~ 0.01,~ 0.025,~ 0.05 \big\}$
    \item $\Delta \in \big\{ 0.0,~ 0.1,~ 0.2,~ \ldots,~ 0.9,~ 1.0 \big\}$
\end{itemize}

Each combination $(\alpha_i,~ \Delta_j)$ is tested by running 100 independent tests of the problem described by Tables~\ref{table:visual-fn-enum}-\ref{table:visual-fn-specs}. The starting points across these 100 runs (for the same $(\alpha_i,~ \Delta_j)$ combination) are randomly generated. However, the set of starting points are the same across different $(\alpha_i,~ \Delta_j)$ combinations to ensure uniformity in the tests.

Fig.~\ref{fig:results-vis-mean} shows the contour graph of the average optimality gap, measured as $z^{*\iter{N}} - z^*$, for the different combinations of $\alpha$ and $\Delta$. Fig.~\ref{fig:results-vis-pct-exploit} presents the contour of the average percentages of exploitation points taken throughout the runs. On the other hand, Fig.~\ref{fig:results-vis-pct-unfeas-total} shows the (average) total percent of unfeasible points sampled by the algorithm.

As expected, the percentage of exploitation points decreases w.r.t. increasing $\alpha$, due to a higher tendency to switch to exploration; this trend is verified by the mostly horizontal contours in Fig.~\ref{fig:results-vis-pct-exploit}. Less-expected observations can be taken from Fig.~\ref{fig:results-vis-pct-unfeas-total}, regarding the relation of $\alpha$ and $\Delta$ to the ``cautiousness'' of the algorithm. In a general case, we see in Fig.~\ref{fig:results-vis-pct-unfeas-total} that the percentage of unfeasible samples increases with the increment of both $\alpha$ and $\Delta$. Also we can see that there are two approaches to increase cautiousness: to decrease $\alpha$ (exploit more), or to decrease $\Delta$, as evident in the vertical contours on the left side, and the horizontal contours on the bottom part of Fig.~\ref{fig:results-vis-pct-unfeas-total}.

% We see the greater influence of $\alpha$ for the safety of exploitation sampling (see Fig.~\ref{fig:results-vis-pct-unfeas-exploit}. This can be explained by the fact that with lower improvement threshold $\alpha$, we can choose a candidate point nearer to the current (feasible) best, and thereby also more probable to be a safe one. 

% On the other hand, the safety of exploration is mostly decided by the risk factor $\Delta$, but interestingly, this mostly applies for higher $\alpha$. As in Fig.~\ref{fig:results-vis-pct-unfeas-explore}, at low $\alpha$, the safety of exploration is actually more sensitive to $\alpha$, in the sense that smaller $\alpha$ leads to higher unfeasible samples from exploration. Correlating with Fig.~\ref{fig:results-vis-pct-exploit}, the low percentage of exploration iterations means that there are less samples allotted to discover the search space, and hence there are correspondingly more chance to violate in those few exploration iterations.

True enough, adjusting $\Delta$ to larger values has its own incentives and trade-offs. As shown in Fig.~\ref{fig:results-vis-mean}, the optimality gap w.r.t. $\bm{x}^*$ improves with higher $\Delta$, especially with relatively large values of $\alpha$. However, as discussed, this is at the cost of increasing the unfeasible samples count. This trade-off introduces a perspective of balancing \textit{risk} and \textit{reward}, which can be tuned by the user. For this particular case, the use of a riskier exploration (higher $\Delta$ parameter) had its incentives. This is because as shown in Fig.~\ref{fig:visual-pts-distrib}, the feasible regions are disjoint, and the two local minima (one of which is the global one) are located in separate and faraway regions, rewarding a more aggressive exploration. In some cases, we can decide to have a more conservative exploration (choose a smaller $\Delta$) when given a relatively small iteration budget w.r.t. search space, e.g. when the problem is relatively high dimensional, or when large constraint violations shall be avoided. However, as pointed out in Remark~\ref{remark:safe-opt}, a completely safe optimization is not guaranteed even when $\Delta \rightarrow 0.0$, due to the lack of knowledge about the true Lipschitz constants $\rho_s$, which holds for a majority of contexts.

In choosing a good combination of $\alpha$ and $\Delta$ for our next experiments, we recognize in Fig.~\ref{fig:results-vis-mean} that the mean of the optimality gap improves from the bottom left corner going to the top right, but there has been no significant improvement in the region where $\alpha > 0.005$ and $\Delta > 0.20$. However in the corresponding area in Fig.~\ref{fig:results-vis-pct-unfeas-total}, increasing $\alpha$ and $\Delta$ is only paid by more unfeasible samples, which is not a reasonable trade-off. Hence, based on the results for this particular problem, we choose the combination of $\alpha = 0.005$ and $\Delta = 0.20$ for the studies presented next. We now summarize the resulting SMGO-$\Delta$ hyperparameters in Table~\ref{table:smgo-d-hyperparams}, used in the succeeding tests, and recommended as default values. 

\begin{table*}[!t]
\small
    \begin{center}
    	\begin{tabular}{|c|p{2.5in}|c|}
    		\hline
    		\textbf{Parameter} & \textbf{Description} & \textbf{Default value}\\ \hline
    		$\alpha$ & Expected improvement threshold factor & 0.005 \\ \hline
    		$\Delta$ & Risk parameter & 0.20 \\ \hline
    		$\beta$ & Weighting factor between $\tilde{f}^{\iter{n}}(\bm{x})$ and $\lambda^{\iter{n}}(\bm{x})$ for exploitation cost (see Section~\ref{subsec:exploit}) & 0.1 \\ \hline
    		$\phi$ & Candidate point age-based merit growth rate & $1\times 10^{-6}$ \\ \hline
    		$B$ & Multiplier for generated candidate points per iteration (see Section~\ref{subsec:cdpt-generation}) & 5 \\ \hline
    		$L$ & Number of generated pseudo-random candidate points for exploitation and resp. exploration & 500 \\ \hline
    		$\overline{\upsilon}$ & Trust region maximum side measure & 0.1 \\ \hline
    		$\kappa$ & Shrink factor for the trust region & 0.5 \\ \hline
    		$\underline{\upsilon}$ & Trust region minimum side measure & $\kappa^{10}\overline{\upsilon}$ \\ \hline
    	\end{tabular}
        \caption{Default SMGO-$\Delta$ hyperparameters summary}
    	\label{table:smgo-d-hyperparams}
    \end{center}
\end{table*}

\begin{figure}[!t]
	\centering
 	\includegraphics[width=0.65\columnwidth]{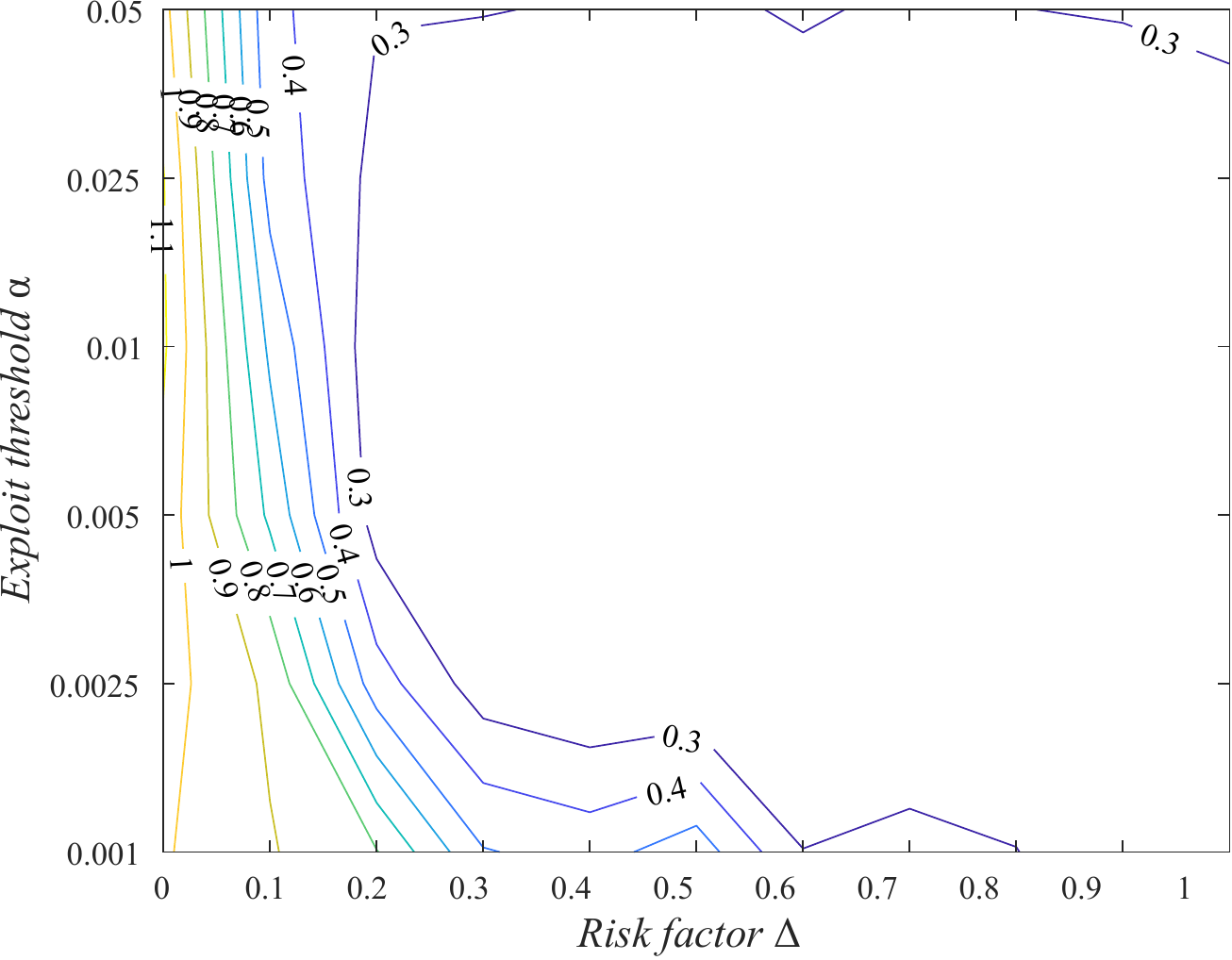}
	\caption{Illustrative example: contour plots of optimality gap, \\variations w.r.t. $\alpha$ and $\Delta$}
	\label{fig:results-vis-mean}
\end{figure}

\begin{figure}[!t]
	\centering
 	\includegraphics[width=0.65\columnwidth]{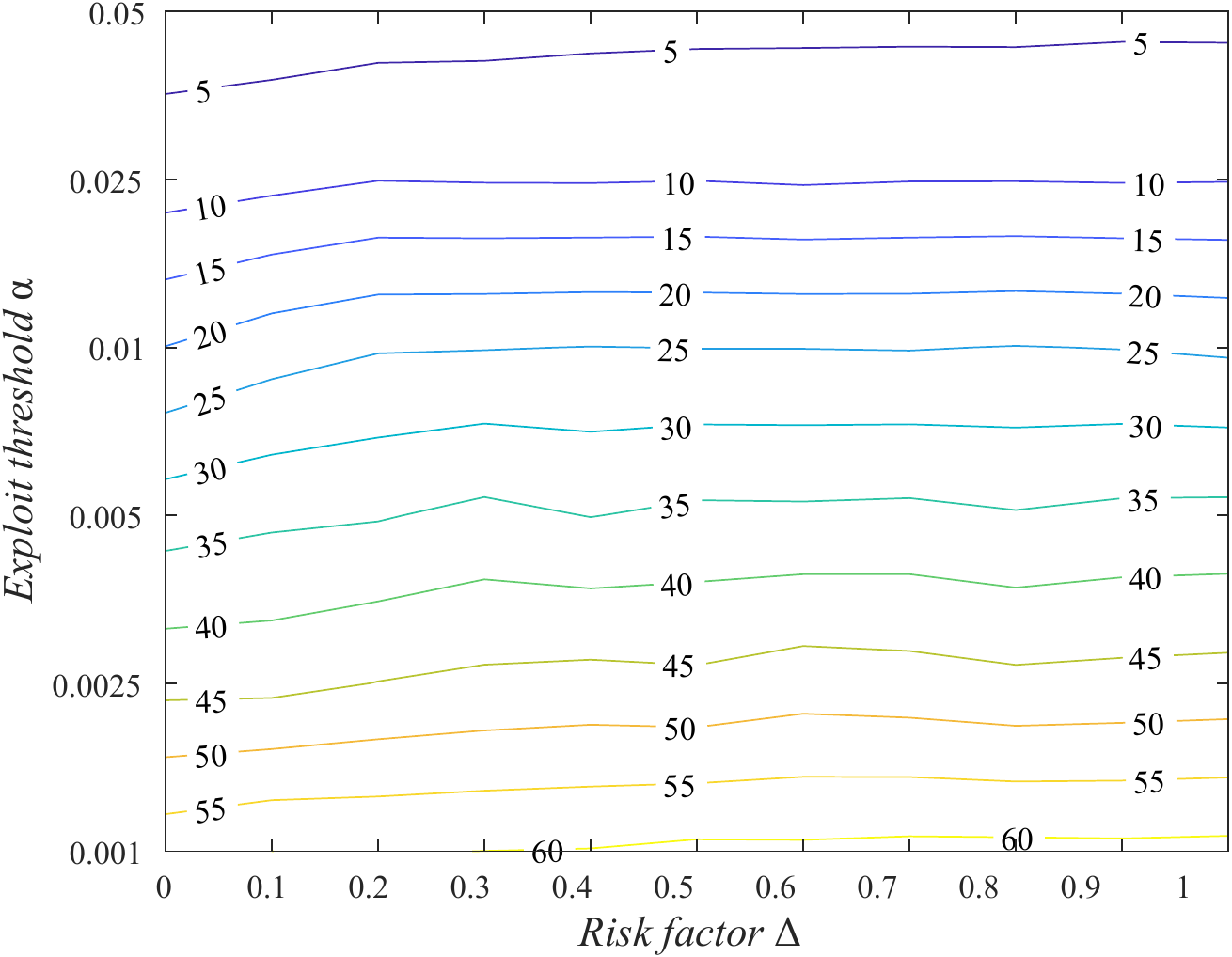}
	\caption{Illustrative example: average percentage of generated exploitation points, \\variations w.r.t. $\alpha$ and $\Delta$}
	\label{fig:results-vis-pct-exploit}
\end{figure}

\begin{figure}[!t]
	\centering
 	\includegraphics[width=0.65\columnwidth]{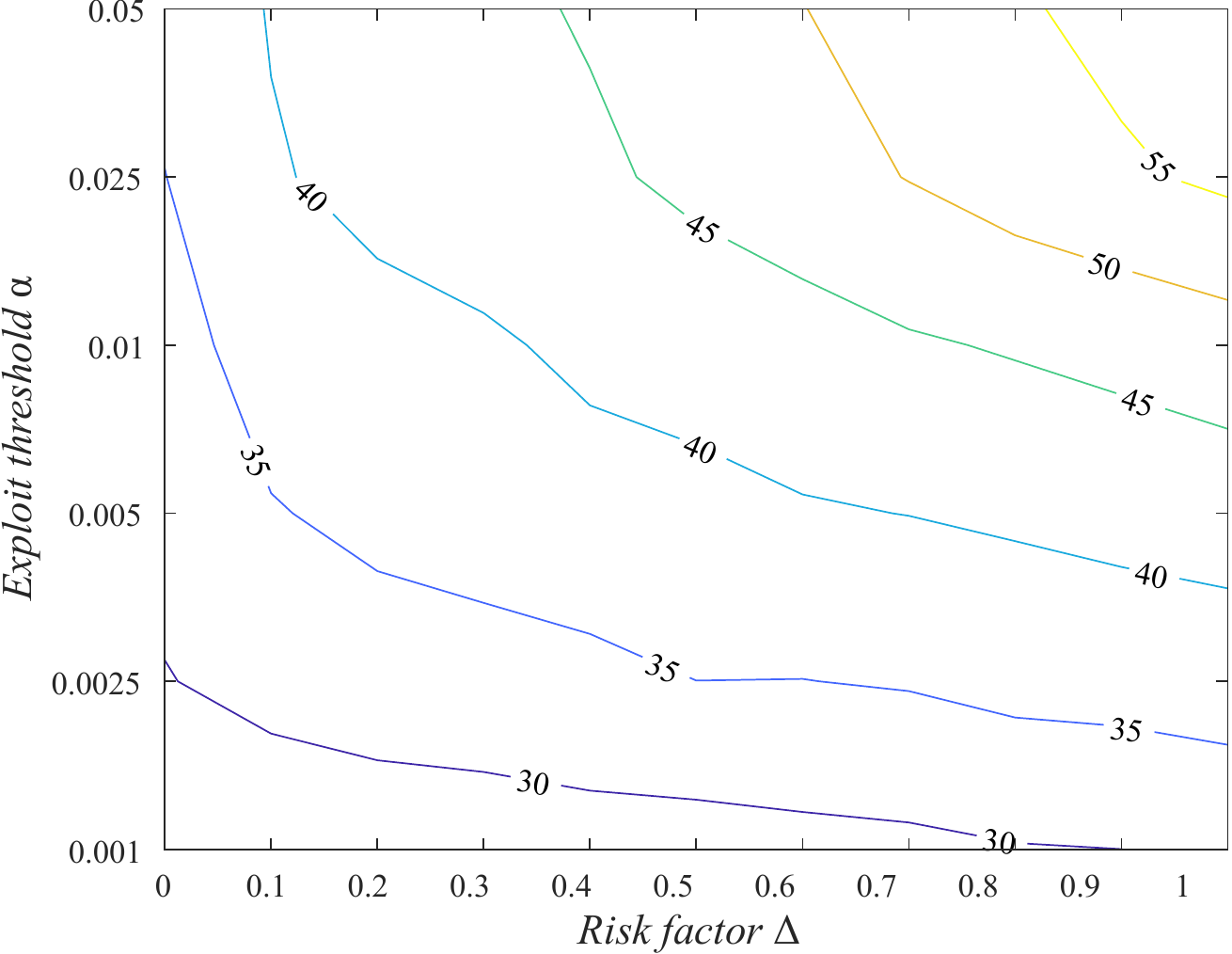}
	\caption{Illustrative example: average percentage of unfeasible samples, \\variations w.r.t. $\alpha$ and $\Delta$}
	\label{fig:results-vis-pct-unfeas-total}
\end{figure}

\section{Benchmarks}
\label{sec:syn-tests}

We now compare SMGO-$\Delta$ with representative existing methods for black-box optimization with black-box constraints. The first method is the constrained Bayesian optimizer (CBO) as available in MATLAB Optimization Toolbox, the most common approach for black-box optimization in the recent literature. Another competitor method we used is the Nonlinear Optimization by Mesh Adaptive Direct Search (NOMAD) \cite{ledigabel2011,audet2021nomad}, another widely-used algorithm, of which the latest version NOMAD 4 \cite{audet2021nomad} is made available by its authors. We use the default settings for the competitor algorithms, while the settings for SMGO-$\Delta$ are as chosen in the last section.

We subject all test algorithms to a benchmark with 10 synthetic problems, with different properties of the objective and constraint functions. For each problem and test algorithm, we performed a statistical test comprising of 50 independent runs, each run having 500 iterations. To preserve fairness of comparison, all test algorithms for the same problem use the same set of starting points for the respective independent runs.

\subsection{Benchmark problems}
A summarized overview of the synthetic problems we consider are given in Table~\ref{table:syn-tests}. The first 7 problems are from the CEC2006 benchmark set \cite{Liang2006}, and are commonly-used in benchmarking black-box optimization methods. Furthermore, T1 is a test problem used in \cite{Ariafar2019,Hernandez-Lobato2015,Hernandez-Lobato2016}, while T2 and T3 are from \cite{Ariafar2019,Gardner2014}. More details on the function definitions are provided in the Appendix. Note that some inequality constraints are enumerated in the form $g(\bm{x}) \leq 0$ to keep consistent with their respective source literature, but these have been appropriately adjusted to follow $g(\bm{x}) \geq 0$ when used with SMGO-$\Delta$.

Two problems denoted as G05MOD and G23MOD are modified versions of G05 and G23 found in \cite{Liang2006}. G05MOD takes from G05, but its equality constraints are converted to inequality constraints. G23MOD, on the other hand, is based on G23 \cite{Liang2006} but the equality constraints are removed. 

We have selected the benchmark problems to evaluate the performance of the compared optimizers in different dimensionality $D$, feasible region coverage $\rho$, as well as characteristics of the objective and constraint/s. Some functions, like G04 and G24 have fairly large feasible region coverage, and while G08 and G09 have coverage of less than 1\%. Other functions did not have estimated $\rho$ as available in their respective source literature.

\begin{table}
\small
\begin{center}
	\begin{tabular}{|c|c|c|c|c|c|c|}
		\hline
		\textbf{Problem} & $D$ & $S$ & $f$ \textbf{type} & $g$ \textbf{type} & $z^*$ & $\rho$ \\ \hline
		G04 & 5 & 6 & Q & NL & -3.0665e+04 & 52.123\% \\ \hline
		G05MOD & 4 & 5 & C & L+NL & 5.1265e+03 & N/A \\ \hline
		G08 & 2 & 2 & NL & NL & -0.0958 & 0.8560\% \\ \hline
		G09 & 7 & 4 & NL & NL & 680.6301 & 0.5121\% \\ \hline
		G12 & 3 & 1 & Q & NL & -1.0 & 4.7713\% \\ \hline
		G23MOD & 9 & 2 & L & NL & N/A & N/A \\ \hline
		G24 & 2 & 2 & L & NL & -5.5080 & 79.6556\% \\ \hline
		T1 & 2 & 2 & L & Q+NL & N/A & N/A \\ \hline
		T2 & 2 & 1 & NL & NL & N/A & N/A \\ \hline
		T3 & 2 & 1 & NL & NL & N/A & N/A \\ \hline
	\end{tabular}
	\caption{Summary of benchmark problems. $\rho$ stands for the estimated feasible region coverage w.r.t. search space. L: linear, Q: quadratic, C: cubic, NL: nonlinear. \\N/A: quantities are not specified in source literature}
	\label{table:syn-tests}
\end{center}
\end{table}

\subsection{Graphical comparison}

Figs.~\ref{fig:t1}-\ref{fig:t3} show the best points achieved by CBO, NOMAD~4, and SMGO-$\Delta$ from the different runs. In these plots, the greyed areas are the unfeasible regions, and the boundaries of the feasible region are drawn in solid black line. We see that all the results of CBO and SMGO-$\Delta$ lie around the same region in the search space. Furthermore, the spread of the SMGO-$\Delta$ best points are more spread than those of CBO, as seen in the zoom inset of all plots. In the same insets, NOMAD~4 is seen as having the most concentrated distribution, but there are also best points in all plots which lie on a faraway area in the search space. For example, for T2 (Fig.~\ref{fig:t2}), the NOMAD~4 points on the upper left oblong means that at some runs, the feasible region on the lower right (containing the optimal point) is not found. On another case, some NOMAD~4 results for T3 lie on the local minima (center left side, and upper right side of Fig.~\ref{fig:t3}).

\begin{figure}[!t]
	\centering
 	\includegraphics[width=0.85\columnwidth]{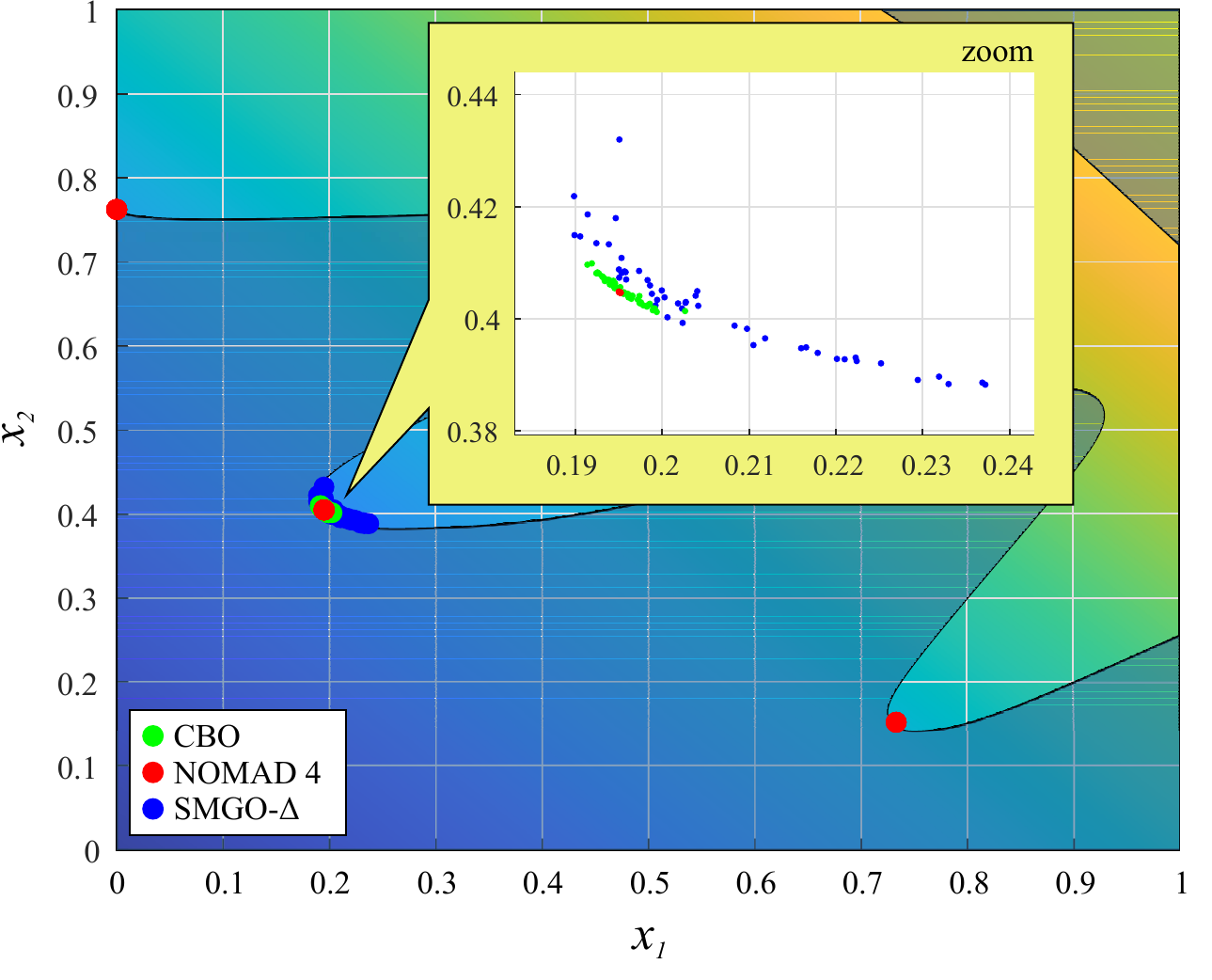}
	\caption{Benchmark tests: distribution of best results for \\ different optimization runs on T1.}
	\label{fig:t1}
\end{figure}

\begin{figure}[!t]
	\centering
 	\includegraphics[width=0.85\columnwidth]{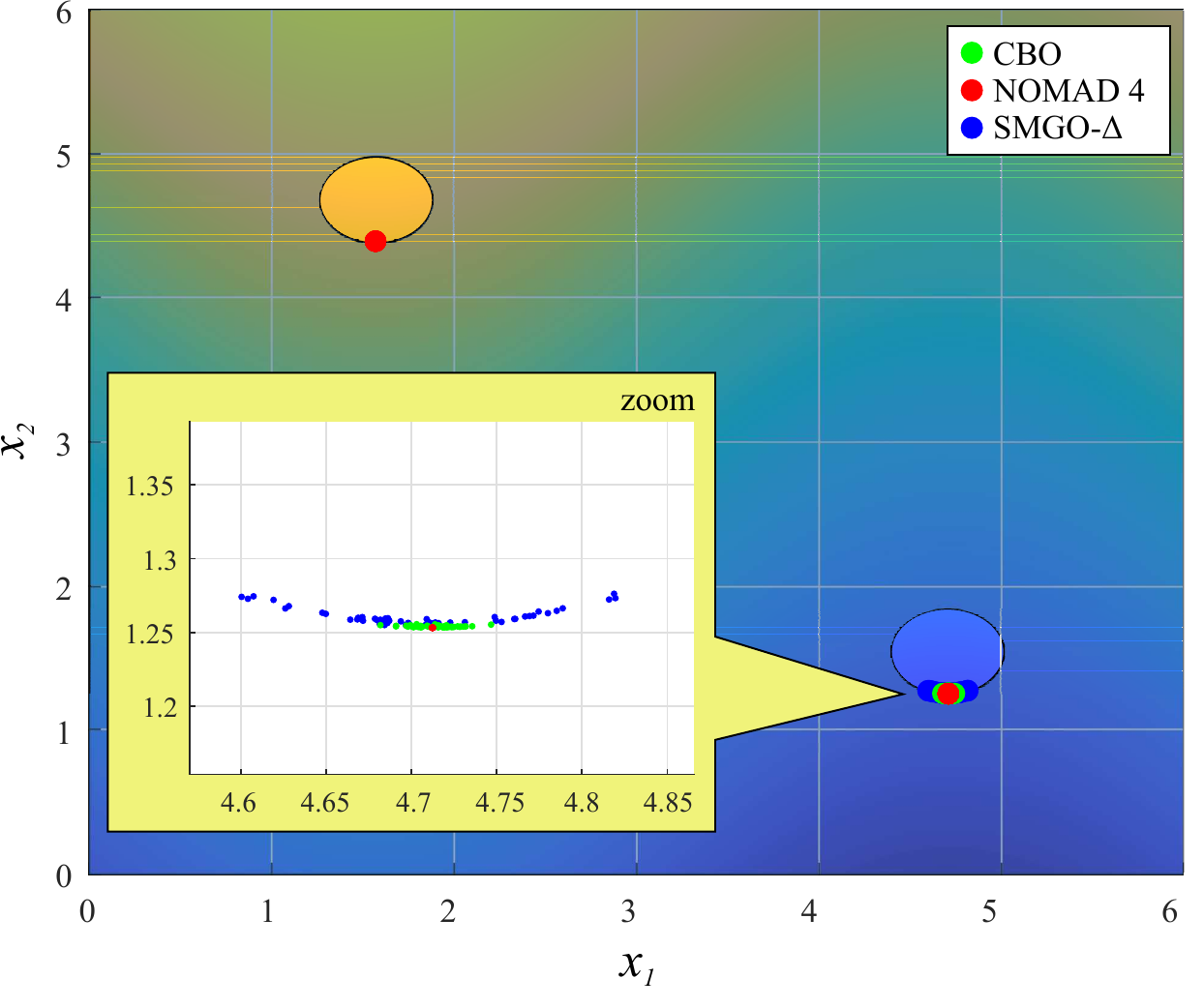}
	\caption{Benchmark tests: distribution of best results for \\ different optimization runs on T2.}
	\label{fig:t2}
\end{figure}

\begin{figure}[!t]
	\centering
 	\includegraphics[width=0.85\columnwidth]{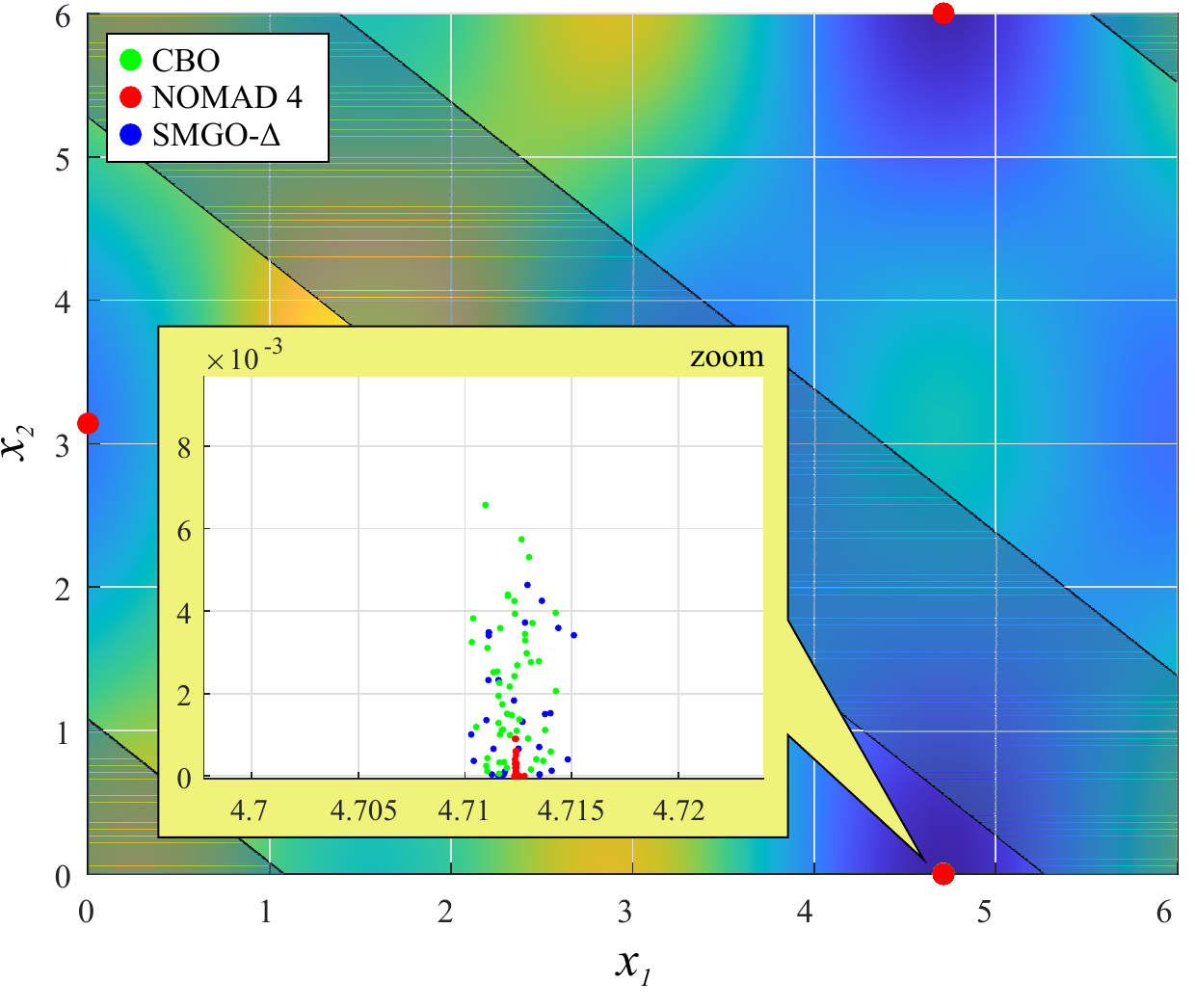}
	\caption{Benchmark tests: distribution of best results for \\ different optimization runs on T3.}
	\label{fig:t3}
\end{figure}

\subsection{Comparative results}

We summarize the average results across 50 runs of the respective compared algorithms in Table~\ref{table:syn-tests-result1}. In problems G04, G05MOD, and G12, we see that all three methods had highly comparable results, hence we did not highlight any best result in their corresponding rows. For most problems, CBO displayed the best results, for which it resulted in joint best average in 6 problems. For the others, it had comparable results except for G23MOD where CBO resulted in significantly worse average. NOMAD~4 had 3 joint best averages, and for problems T1-T3, it prematurely finished optimization because of maximum gridding, at around $n=200$ instead of the maximum $N=500$. This has led to a worst result for T2, which was 8 times larger than the results for CBO and SMGO-$\Delta$. SMGO-$\Delta$ produced competitive results compared with the other algorithms. In 5 problems, it was joint best with either CBO or NOMAD~4. However, its average results with G09 was around 2 times worse than the others, which can be attributed to the problem having relatively higher dimensions, combined with low $\rho$.

\begin{table}
\small
\begin{center}
	\begin{tabular}{|c|c|c|c|}
		\hline
		\textbf{Problem} & \textbf{CBO} & \textbf{NOMAD 4} & \textbf{SMGO-$\Delta$}\\ \hline
		G04    & -3.0518e+04 & -3.0665e+04 & -3.0343e+04\\ \hline
		G05MOD & 5.2185e+03  & 5.2073e+03  & 5.4014e+03\\ \hline
		G08    & \cye-0.0958     & -0.0823     & \cye-0.0958\\ \hline
		G09    & \cye754.1200    & \cye717.6586    & 1.5131e+03 \\ \hline
		G12    & -0.9998     & -1.000      & -0.9671 \\ \hline
		G23MOD & -3.3832e+03 & \cye-3.9000e+03 & \cye-3.9941e+03 \\ \hline
		G24    & \cye-5.4262     & \cye-5.3852     & -5.2789 \\ \hline
		T1     & \cye0.6005      & 0.6769$^*$      & \cye0.6088 \\ \hline
		T2     & \cye0.2542      & 1.9671$^{*,**}$      & \cye0.2628 \\ \hline
		T3     & \cye-2.0000     & -1.8904$^*$     & \cye-2.0000 \\ \hline
	\end{tabular}
	\caption{Benchmark tests: average results after 500 iterations. Best results are highlighted in yellow. $^*$: prematurely finished optimization because of maximum gridding refinement, $^{**}$: stopped with no results in 5 separate runs because of no feasible point found after around $n=90$.}  
	\label{table:syn-tests-result1}
\end{center}
\end{table}

Table~\ref{table:syn-tests-result2} shows the average number of iterations at the first feasible sample for the respective problems. CBO achieved the most sole/joint best results across all problems, implying its effectiveness in terms of finding a feasible point. SMGO-$\Delta$ had the best result for G23MOD, and joint best with CBO on three other problems. However, G05MOD was a particularly difficult problem for SMGO-$\Delta$ to find a feasible point, resulting in more than twice iterations as NOMAD~4. This can be associated with a fairly high number of constraints, for which different region/partitions in the search space have different combinations of (predicted) satisfied constraints. This means that in effect, a large region in the search space could have the same number of constraints satisfied, albeit in different combinations. By virtue of the exploration, which prioritizes regions based on number of constraints satisfied, it has taken many iterations of exploration before actually sampling a point that satisfies all 5 constraints. Lastly, NOMAD~4 had no best results in this metric, and with a significant worst result with G23MOD, taking 5 times longer than CBO and 20 times longer than SMGO-$\Delta$ before taking a first feasible sample. Furthermore, we point out that although NOMAD~4 had competitive results with problem T2, there were 5 runs in which NOMAD~4 terminated without any result because there was still no feasible point found after around $n=90$.

We have statistically compared the results of SMGO-$\Delta$ with the other algorithms using Wilcoxon and Kruskal-Wallis non-parametric tests, both with 5\% significance levels and a null hypothesis that SMGO-$\Delta$ results are similar to those from the other methods. We found out that SMGO-$\Delta$ distributions for the final results after $N=500$, as well as for the iterations for first feasible sampling, are not statistically similar to those of competitor techniques.

\begin{table}
\small
\begin{center}
	\begin{tabular}{|c|c|c|c|}
		\hline
		\textbf{Problem} & \textbf{CBO} & \textbf{NOMAD 4} & \textbf{SMGO-$\Delta$}\\ \hline
		G04    & \cye4.250  & 19.531 & \cye4.938\\ \hline
		G05MOD & \cye7.760  & 66.156 & 166.540\\ \hline
		G08    & \cye6.440  & 18.880 & 27.860\\ \hline
		G09    & \cye13.820 & 62.160 & 42.020 \\ \hline
		G12    & \cye13.000 & 29.087  & 25.500 \\ \hline
		G23MOD & 8.735  & 46.837 & \cye2.449 \\ \hline
		G24    & \cye3.121   & 6.485  & \cye2.667 \\ \hline
		T1     & \cye3.192  & 9.192  & \cye3.192 \\ \hline
		T2     & \cye8.694  & 20.227$^{**}$  & 24.102 \\ \hline
		T3     & \cye2.667   & 4.800  & 6.133 \\ \hline
	\end{tabular}
	\caption{Benchmark tests: average number of iterations before finding the first feasible point. Only the runs starting from an unfeasible point are considered. Best results are highlighted in yellow. $^{**}$: stopped with no results in 5 separate runs because of no feasible point found after around $n=90$.}  
	\label{table:syn-tests-result2}
\end{center}
\end{table}

\section{Case Study: MPC Tuning for a DC Motor}
\label{sec:expt-tests}

We finally use SMGO-$\Delta$ to tune a model predictive controller for a DC motor with model uncertainty and task-level constraints. We compare its performance with an ideal optimal controller with complete information about the actual system model at hand, optimizing the task-level costs considering the entire task duration. Furthermore, we also make a comparison with CBO.

\begin{figure}[!t]
	\centering
	\includegraphics[width=0.5\columnwidth]{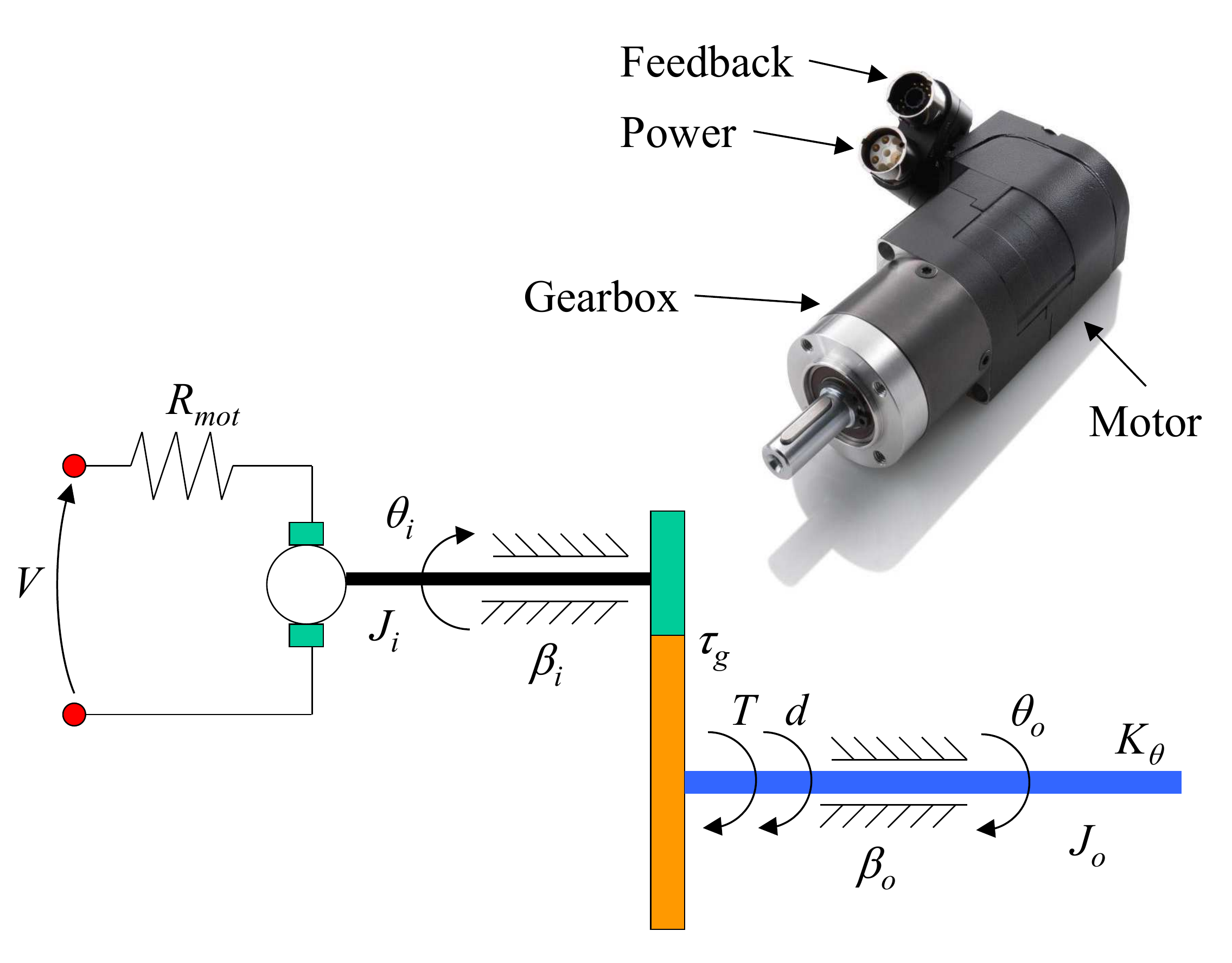}
	\caption{Case study: servomotor model considered \\ in the MPC tuning problem.}
	\label{fig:servo-model}
\end{figure}

\subsection{System Model and Controller Structure}
The servo positioning system is shown in Fig.~\ref{fig:servo-model}, where $V$ is the input voltage, $\theta_i$ and $\theta_o$ are the input and output shaft angular positions, $T$ is the torsional moment between the input and output shafts, and $d$ is the load (disturbance) on the output shaft. The nameplate nominal values for the motor parameters are given in Table~\ref{table:motor-specs}.

Assuming linearity of all the components, the system dynamics are described by the following state-space equations (the continuous time variable is omitted for brevity):

\begin{equation}
\label{eqn:servo-model}
\small
    \dot{\bm{\xi}}
    = 
    \begin{bmatrix}
    0 & 0 & 1 & 0 \\
    0 & 0 & 0 & 1 \\
    \frac{-K_\theta}{J_i \tau_g^2} & \frac{-K_\theta}{J_i \tau_g} &  -\frac{\beta_i+\frac{K_t^2}{R_{m}}}{J_i} & 0 \\
    \frac{K_\theta}{J_o \tau_g} & \frac{-K_\theta}{J_o} & 0 & \frac{-\beta_o}{J_o}
    \end{bmatrix}
    \bm{\xi}
    +
    \begin{bmatrix}
    0 \\ 0 \\ \frac{K_t}{J_i R_{m}} \\ 0
    \end{bmatrix}
    u
    +
    \begin{bmatrix}
    0 \\ 0 \\ 0 \\ \frac{1}{J_o}
    \end{bmatrix}
    d,
\end{equation}

\begin{equation}
\label{eqn:servo-out}
\small
    \bm{y} = \begin{bmatrix}
    \frac{K_\theta}{\tau_g} & -K_\theta & 0 & 0 \\
    0 & 1 & 0 & 0 \\
    0 & 0 & 1 & 0 \\
    0 & 0 & 0 & 1
    \end{bmatrix}
    \bm{\xi}
\end{equation}
%or simply $\dot{\bm{\xi}} = \bm{Ax} + Bu + B_d d$, $y = \bm{Cx}$, 
\noindent where $\bm{\xi} = \lbrack \theta_i ~ \theta_o ~ \dot{\theta}_i ~ \dot{\theta}_o \rbrack^\top$ is the state, and $\bm{y} = \lbrack T ~ \theta_o ~ \dot{\theta}_i ~ \dot{\theta}_o \rbrack^\top$ the output.

For the considered system, we design a model predictive controller (MPC, \cite{borrelli_bemporad_morari_2017}) to track a reference output shaft angle $\hat{\theta}_o$. An MPC strategy is chosen in this case study to explicitly consider the input and state constraints, pertaining to the voltage input and the maximum torsional moments experienced by the shafts.

After setting a sampling time $T_s=0.1\,$s, at each time step $t$ the following finite horizon optimal control problem is solved:

\begin{equation}
\small
    \label{eqn:mpc-opt}
    \min_{U \in \mathbb{R}^{N_h}} \sum_{i=0}^{N_h} (y_{ref} - y(i|t))^\top \bm{Q} (y_{ref} - y(i|t)) + \sum_{i=0}^{N_h-1} R u(i|t)^2
\end{equation}
\begin{align}
\small
    \label{eqn:mpc-model1}
    \mathrm{s.t.} ~&~ \bm{\xi}(i+1|t) = \bm{A\xi}(i|t) + Bu(i|t) + B_d d(i|t) \\
    \label{eqn:mpc-model2}
    ~ &~ y(i|t) = \bm{Cx}(i|t) \\
    \label{eqn:mpc-model3}
    ~ &~ \bm{\xi}(0|t) = \bm{\xi}(t) \\
    \label{eqn:mpc-constr1}
    ~ &~ \big|u(i|t)\big| \leq \overline{V} \\
    \label{eqn:mpc-constr2}
    ~ &~
    \Big|T(i|t)\Big| \leq \overline{T},
    %\Big|K_\theta\left(\frac{\theta_i}{\tau_g} - \theta_o\right)\Big| \leq \tilde{T},
\end{align}
%(\ref{eqn:mpc-opt}) is the MPC cost function, and
\noindent where for each signal, the time instant $(i|t)$ indicates the value of the signal at time $t+i$ predicted at time $t$, (\ref{eqn:mpc-model1}) is the discrete-time equivalent of \eqref{eqn:servo-model} considering a sampling time of 0.1~s, \eqref{eqn:mpc-model2} is the output equation given in \eqref{eqn:servo-out}, \eqref{eqn:mpc-model3} is the known system state at time step $t$, (\ref{eqn:mpc-constr1}) declares the input constraint with $\overline{V} = 220$~V, and (\ref{eqn:mpc-constr2}) limits the torsional moment between the gears, with $\overline{T}=78$~Nm. $N_h$ is the prediction and control horizon, selected as $N_h=50$. Matrix $\bm{Q}$ is a diagonal positive-definite matrix of tunable parameters $q_1, q_2, q_3, q_4$, while $R$ is fixed as 1.

\begin{table}
\small
\begin{center}
	\begin{tabular}{|c|p{1.75in}|x{1in}|}
		\hline
		\textbf{Variable} & \textbf{Description} & \textbf{Nominal value} \\ \hline
		$R_m$ & Motor electrical resistance & 20~$\Omega$ \\ \hline
		$K_t$ & Motor constant & 10~$\frac{\text{Nm}}{\text{A}}$ \\ \hline
		$K_\theta$ & Output shaft torsional stiffness & 1280~$\frac{\text{Nm}}{\text{rad}}$ \\ \hline
		$J_i$ & Input shaft moment of inertia & 0.5~kg~m$^2$ \\ \hline
		$J_o$ & Output shaft moment of inertia & 25~kg~m$^2$ \\ \hline
		$\beta_i$ & Input shaft friction coefficient & 0.1~$\frac{\text{Nm~s}}{\text{rad}}$ \\ \hline
		$\beta_o$ & Output shaft friction coefficient & 25~$\frac{\text{Nm~s}}{\text{rad}}$ \\ \hline
		$\tau_g$ & Gear ratio (input/output) & 20 \\ \hline
	\end{tabular}
	\caption{Case study: servo motor specifications}  
	\label{table:motor-specs}
\end{center}
\end{table}

\begin{figure}[!t]
	\centering
	\includegraphics[width=0.65\columnwidth]{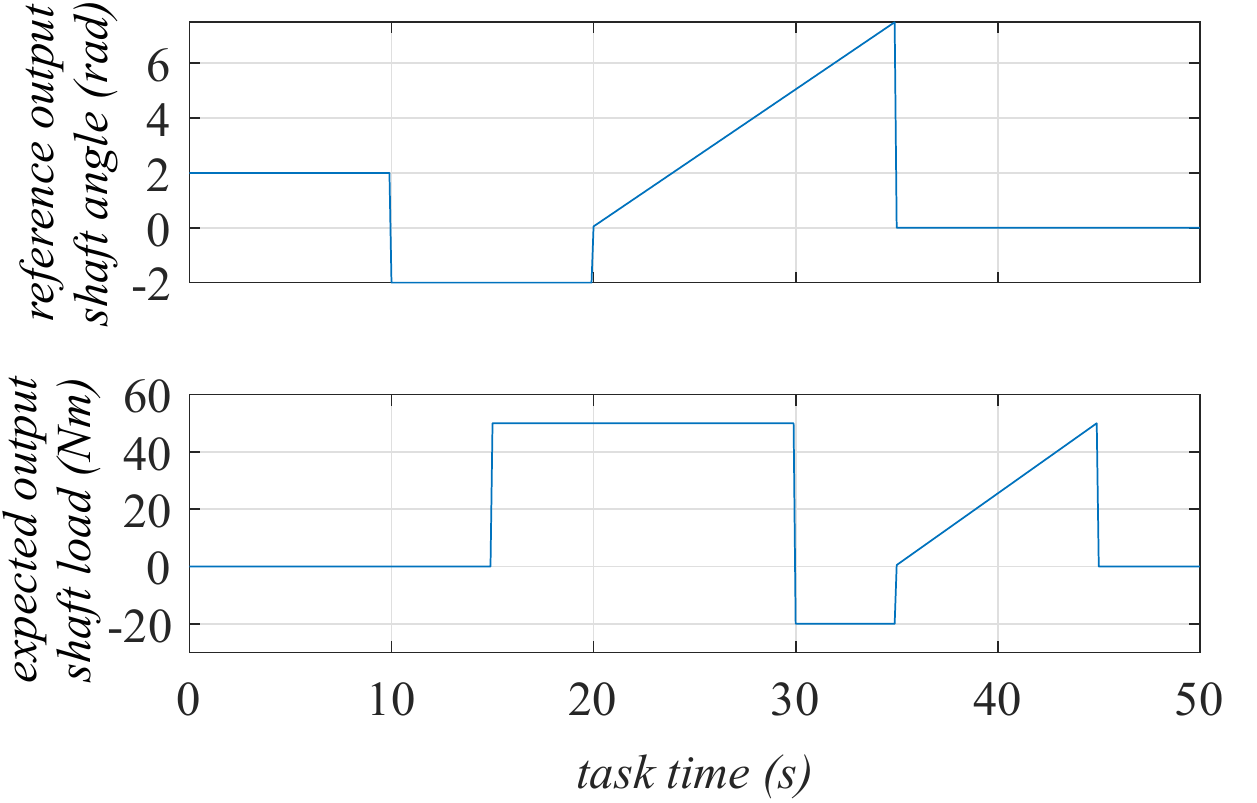}
	\caption{Case study: reference trajectory and expected load\\ for the servomotor problem}
	\label{fig:theta-ref-load}
\end{figure}

\subsection{Black-box optimization for controller tuning}
In the context of factory automation, we encounter machines performing repetitive tasks that usually involve a fixed reference trajectory $\hat{\theta}_o$ and load profile $d$. Fig.~\ref{fig:theta-ref-load} shows the (idealized) task cycle profile we consider, having a duration of $t_{cyc} = 50$~s. 

Once the controller architecture is defined, we aim to tune the MPC parameters to guarantee good tracking performance during the complete task, considering the following practically-motivated issues: 
\begin{enumerate}
    \item MPC cost function matrix $\bm{Q}$ needs to be tuned to maximize tracking performance on $\hat{\theta}_o$. This concern can usually be treated in the design phase without using black box optimization.
    \item The nameplate motor parameters are known (see Table~\ref{table:motor-specs}); however, the real ones differ due to manufacturing tolerances and/or previous usage. In this paper, $R_m, K_t, \beta_i, \beta_o$ may differ up to $\pm12.5\%$ w.r.t. nameplate values (the remaining parameters are assumed exact). This introduces model mismatch, possibly leading to degraded MPC performance, and/or violation of constraints (especially (\ref{eqn:mpc-constr2})). Hence, simultaneous model learning and MPC gain tuning is an appropriate approach for this application.
    \item To protect the motor windings and prolong its service lifetime, the average power consumption of the system throughout the task cycle, must not exceed a maximum $\overline{P}$, set as 25~W. At the same time, constraints \eqref{eqn:mpc-constr1} and \eqref{eqn:mpc-constr2} must be satisfied for the complete task duration. This declares a constraint affecting the whole task duration, i.e. longer than the MPC prediction horizon. Hence, the satisfaction of this constraint cannot be imposed \textit{a priori} in the MPC design phase.
    \item To minimize the cumulative damage to the input-output shaft gear coupling, we limit the total time that the torsional moment $T$ exceeds the limit $\overline{T}$, during the task duration, to be at most $\overline{t}_{viol}$, with $\overline{t}_{viol}=1$~s.
    %$T$ should not exceed $\overline{T} = 80~Nm$.
\end{enumerate}

Based on the previous considerations, the following black-box optimization problem is formulated:

\begin{equation}
\label{eqn:motor-tuning-cost}
    \min_{\bm{x} \in \mathcal{X}} \sum_{t=0}^{t_{max}} (\theta_o(t;\bm{x}) - \hat{\theta}_o(t))^2
\end{equation}

\begin{equation}
\label{eqn:max-tormoment}
    \mathrm{s.t.} ~\int_{t=0}^{t_{cyc}}\left(
    \begin{cases}
      1 & \big| T(t) \big| > \overline{T}\\
      0 & \text{otherwise}
    \end{cases}
   \right)dt \leq \overline{t}_{viol},
\end{equation}
    
\begin{equation}
\label{eqn:max-power}
\small
       \frac{1}{t_{cyc}+1}
    \sum_{t=0}^{t_{cyc}-1} \frac{u(t)\left(u(t) - K_t \dot{\theta}_i(t) \right)}{R_m} \leq \overline{P}
\end{equation}

\noindent where the vector of decision variables is $\bm{x} = \lbrack log(q_1) ~ log(q_2) ~ log(q_3) ~ log(q_4) ~ \tilde{R}_m ~ \tilde{K}_t ~ \tilde{\beta}_i ~ \tilde{\beta}_o \rbrack$. The tilde on 4th to 8th variables of $\bm{x}$ denotes parameter estimates. $\theta_o(t;\bm{x})$ is the resulting output shaft angular position at time $t$, when the MPC controller is applied to the servo system using the parameters $\bm{x}$. Constraint \eqref{eqn:max-tormoment} sets the maximum time of torsional moment violation for the experiment, while \eqref{eqn:max-power} imposes the maximum average motor power for the entire task duration.

% , such that $\overline{T}$ is not exceeded in actuality. 

The search intervals for the first four parameters is $[-7,\,~7]$, while intervals of $\pm 12.5\%$ w.r.t. nominal values are imposed for the next four (see Table~\ref{table:motor-specs} for the nominal values). 25 trials of the SMGO-$\Delta$ algorithm are executed with $N=250$ iterations each, with randomly-generated values of $R_m, K_t, \beta_i, \beta_o$ within the tolerances around the nominal values. For comparison, we also run the constrained Bayesian optimization (CBO, available as \verb|bayesopt| in MATLAB Statistics and Machine Learning Toolbox). 

Each trial has the same values of $R_m, K_t, \beta_i, \beta_o$ for both optimizers to guarantee uniformity in the tests. Furthermore, the starting test point $\bm{x}^{(1)}$ for each trial (both SMGO-$\Delta$ and CBO) is composed of $log(q_i) = 0$, and the nominal values for each motor parameter. Each experiment (iteration) is subject to the shaft load as in Fig.~\ref{fig:theta-ref-load}, but with a different disturbance (2.5~Nm amplitude white noise, band limited to 5~Hz). Again, to maximize uniformity, each iteration has exactly \textit{the same} shaft load disturbance for SMGO-$\Delta$ and CBO. That is, the experiments differ from iteration to iteration (due to the load disturbance), but are the same across the tested optimizers. %As a best-case result, we also consider an FHOCP that optimizes the input sequence over the whole task duration, given a complete knowledge of the actual plant parameters. In the same manner as (\ref{eqn:motor-tuning-cost}), the FHOCP solves 

% \[
% \min_{u(0),\ldots,u(t_{max})} \sum_{t=0}^{t_{max}} (\theta_o(t) - \hat{\theta}_o(t))^2
% \]

% \noindent considering the constraints (\ref{eqn:max-tormoment})-(\ref{eqn:max-power}). The cost value computed by the FHOCP is denoted as $z^*_{FHOCP}$.

\subsection{Comparative Results}

A comparison of the results from SMGO-$\Delta$ and CBO is laid out in Fig.~\ref{fig:results-mpc-tuning}. The results show the distributions of best samples history (first row), the corresponding constraint values for the best sample (second and third rows), and the count of unfeasible samples within the last 20 iterations (last row). The individual values from the 25 independent trials, as well as the mean values, are shown for both algorithms. % Furthermore, note that the distribution histories for constraints are for the best (feasible) point found, hence there are no violations shown in these graphs.

\begin{figure}[!t]
	\centering
 	\includegraphics[width=\columnwidth]{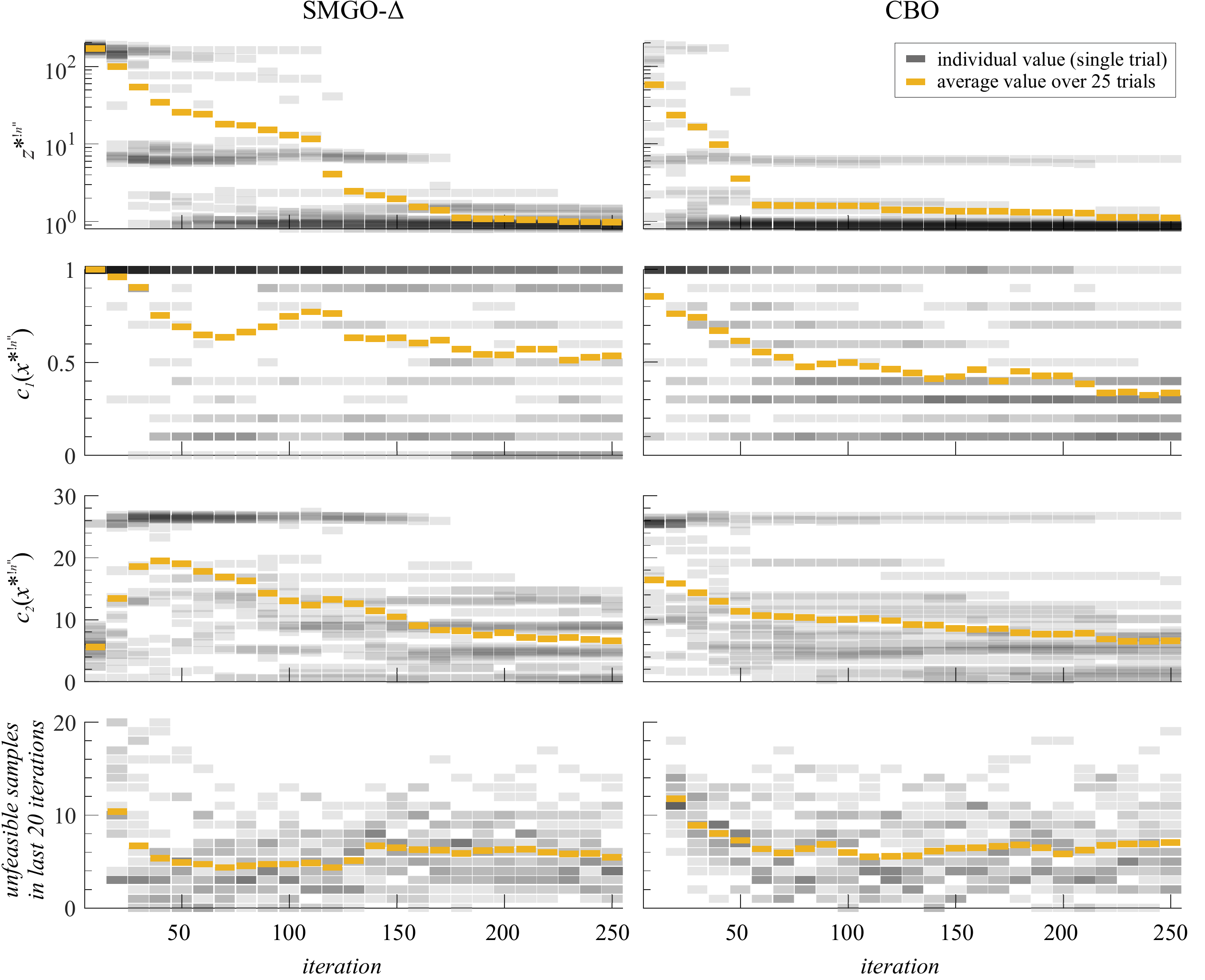}
	\caption{Case study: MPC tuning results comparison, SMGO-$\Delta$ vs CBO: best objective, \\constraints, and count of unfeasible samples in the previous 20 iterations}
	\label{fig:results-mpc-tuning}
\end{figure}

The iteration-based optimization performance of SMGO-$\Delta$ was slower than CBO for this particular test problem, particularly around iterations 30 to 110, where SMGO-$\Delta$ suffered from slow improvement of best feasible point, while CBO was already in the range of 1.10 (average) best objective. However, from around $n=170$, SMGO-$\Delta$ managed to produce better average results than CBO, and takes this advantage until the allotted 250 iterations. On the other hand, there were several trials for CBO whose best results were stuck at around 7.0 even after $n=200$. At the end of the iteration budget, we have obtained an average best objective of 0.979 for SMGO-$\Delta$, and 1.105 for CBO.

The distribution histories for the constraints (for the best sample $\bm{x}^{*\iter{n}}$) show that the best sample is found more and more towards the boundary of feasible set $\mathcal{G}$. This is a reasonable trend, because demanding better tracking performance also means leaning on more aggressive controllers, which in turn are more likely to violate maximum torsional moment and power limits. 

The count of unfeasible samples are lower with SMGO-$\Delta$ than with CBO, as seen in the last row of Fig.~\ref{fig:results-mpc-tuning}. We see that the slow improvement on SMGO-$\Delta$ around $n=30$ to $n=110$ coincide with the low count of unfeasible samples, which can be because the higher tendency of exploitation at these iterations. However, on the later part of the runs, the average unfeasible samples count for SMGO-$\Delta$ and CBO are similar. After the allotted 250 iterations, the average total unfeasible samples of SMGO-$\Delta$ (29.6\%) are less than that of CBO (34.7\%). 

A clear advantage can be seen when comparing the computational times of the two algorithms (not including the long function evaluation times), shown in Fig.~\ref{fig:results-comp-times}. It is clear that the proposed SMGO-$\Delta$ is much faster than Bayesian, with per-iteration computation times almost 2 orders of magnitude faster. The (average) total time for SMGO-$\Delta$ in the entire optimization run (250 iterations) is found to be 20.1~s, which is 51 times shorter than CBO (with 1030~s). Given the much lighter computational burden, and a competitive iteration-based optimization performance, SMGO-$\Delta$ can be argued to be a worthy alternative to CBO.

\begin{figure}[!t]
	\centering
 	\includegraphics[width=0.65\columnwidth]{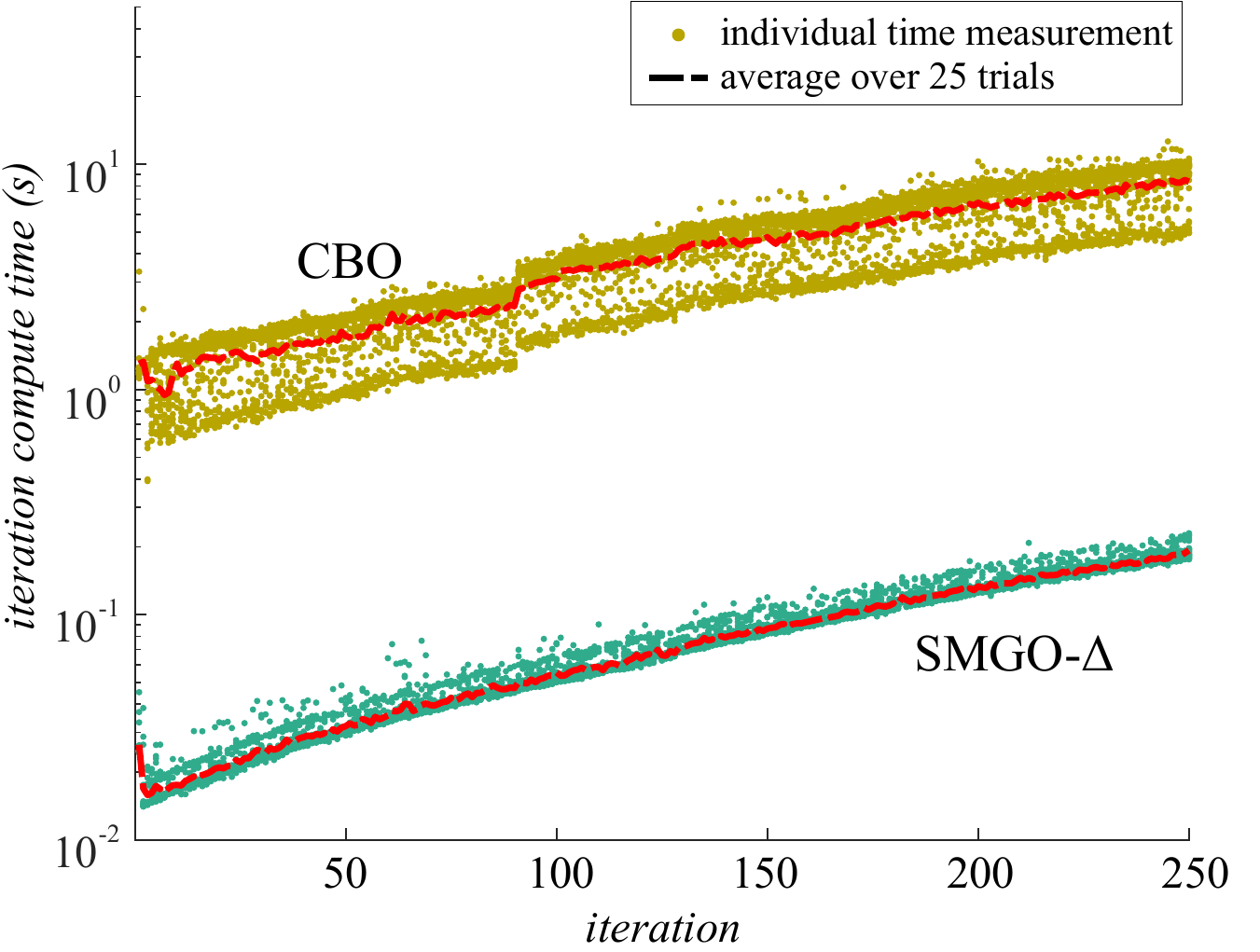}
	\caption{Case study: computational times comparison, SMGO-$\Delta$ vs CBO}
	\label{fig:results-comp-times}
\end{figure}

% The advantage of lighter computational burden is clearly appreciated with shorter black-box experiments. In Fig.~\textbf{BLAH}, we show the history of best results w.r.t. wall-clock time. The currently discussed MPC tuning experiment was found to take around 11.5~s per iteration (simulation), and in this graph, we assume a constant experiment time. The first graph 

\section{Conclusions}
A global optimization algorithm based on the Set Membership framework, named SMGO-$\Delta$, is designed for cases where the objective as well as the constraints are black-box, e.g. both can only be evaluated through sampling, possibly affected by bounded noise. The exploitation and exploration strategies of SMGO-$\Delta$ are formulated based on the calculation of guaranteed bounds on the cost function and constraints, according to the Set Membership theory. The convergence properties of the proposed algorithm are proven in a theoretical analysis under a Lipschitz continuity assumption of the cost and constraint functions. Furthermore, extensions to noisy samples, enrichment of search directions, and computational cost are discussed. The designed algorithm features good iteration-based performance, repeatability, low computational complexity, and intuitive tunable risk parameter.

The performance and the sensitivity of the algorithm to its user-defined parameters are investigated with an illustrative example function. The SMGO-$\Delta$ is compared with state-of-the-art methods in academic benchmarks, showing competitive iteration-based performance. Furthermore, SMGO-$\Delta$ has been compared side-by-side with constrained Bayesian optimization in the engineering design context of MPC tuning for an electromechanical system with uncertain parameters. The tests show the highly comparable performance of SMGO-$\Delta$, with the additional advantage of being around 50 times faster than constrained Bayesian optimization for the treated test case.
\label{sec:conclusion}

\appendix

\section*{Appendix. Problem Definitions for Benchmark Tests}

\subsection*{G04 \cite{Liang2006}}
\begin{table}[H]
\small
\begin{center}
	\begin{tabular}{|c|p{4.5in}|}
	\hline
	\textbf{Description} & \textbf{Value} \\ \hline
	Objective & $f(\bm{x}) = 5.3578547x_3^2 + 0.8356891x_1x_5 + 37.293239x_1 - 40792.141$ \\ \hline
	\multirow{6}{*}{Constraints} & $g_1(\bm{x}) = 85.334407 + 0.0056858x_2x_5 + 0.0006262x_1x_4 - 0.0022053x_3x_5 - 92 \leq 0$ \\ 
	& $g_2(\bm{x}) = -85.334407 - 0.0056858x_2x_5 - 0.0006262x_1x_4 + 0.0022053x_3x_5 \leq 0$ \\ 
	& $g_3(\bm{x}) = 80.51249 + 0.0071317x_2x_5 + 0.0029955x_1x_2 + 0.0021813x^2_3 - 110 \leq 0$ \\ 
	& $g_4(\bm{x}) = -80.51249 - 0.0071317x_2x_5 - 0.0029955x_1x_2 - 0.0021813x^2_3 + 90 \leq 0$ \\ 
	& $g_5(\bm{x}) = 9.300961 + 0.0047026x_3x_5 + 0.0012547x_1x_3 + 0.0019085x_3x_4 - 25 \leq 0$ \\ 
	& $g_6(\bm{x}) = -9.300961 - 0.0047026x_3x_5 - 0.0012547x_1x_3 - 0.0019085x_3x_4 + 20 \leq 0$ \\ \hline
	\multirow{3}{*}{Search space} & $78 \leq x_1 \leq 102,$ \\
	& $33 \leq x_2 \leq 45,$ \\
	& $27 \leq x_i \leq 45, i=3,4,5$ \\ \hline
	\end{tabular}
	\label{table:g04-defn}
\end{center}
\end{table}

\subsection*{G05MOD \cite{Jiang2021,Liang2006}}
\begin{table}[H]
\small
\begin{center}
	\begin{tabular}{|c|p{4in}|}
	\hline
	\textbf{Description} & \textbf{Value} \\ \hline
	Objective & $f(\bm{x}) = 3x_1 + 0.000001x^3_1 + 2x_2 + (0.000002/3)x^3_2$ \\ \hline
	\multirow{5}{*}{Constraints} & $g_1(\bm{x}) = x_3 - x_4 - 0.55 \leq 0$ \\ 
	& $g_2(\bm{x}) = x_4 - x_3 - 0.55 \leq 0$ \\ 
	& $g_3(\bm{x}) = 1000 \mathrm{sin}(-x_3 - 0.25) + 1000 \mathrm{sin}(-x_4 - 0.25) + 894.8 - x_1 \leq 0$ \\ 
	& $g_4(\bm{x}) = 1000 \mathrm{sin}(x_3 - 0.25) + 1000 \mathrm{sin}(x_3 - x_4 - 0.25) + 894.8 - x_2 \leq 0$ \\ 
	& $g_5(\bm{x}) = 1000 \mathrm{sin}(x_4 - 0.25) + 1000 \mathrm{sin}(x_4 - x_3 - 0.25) + 1294.8 \leq 0$ \\ \hline
	\multirow{2}{*}{Search space} & $0 \leq x_i \leq 1200, i=1,2$ \\
	& $-0.55 \leq x_j \leq 0.55, j=3,4$ \\ \hline
	\end{tabular}
	\label{table:g05mod-defn}
\end{center}
\end{table}

\subsection*{G08 \cite{Liang2006}}
\begin{table}[H]
\small
\begin{center}
	\begin{tabular}{|c|p{2in}|}
	\hline
	\textbf{Description} & \textbf{Value} \\ \hline
	Objective & $f(\bm{x}) = -\frac{\mathrm{sin}^3(2\pi x_1) \mathrm{sin}(2\pi x_2)}{x_1^3(x_1 + x_2)}$ \\ \hline
	\multirow{2}{*}{Constraints} & $g_1(\bm{x}) = x_1^2 - x_2 + 1 \leq 0$ \\ 
	& $g_2(\bm{x}) = 1 - x_1 + (x_2 - 4)^2 \leq 0$ \\ \hline
	Search space & $0 \leq x_i \leq 10, i=1,2$ \\ \hline
	\end{tabular}
	\label{table:g08-defn}
\end{center}
\end{table}

\subsection*{G09 \cite{Liang2006}}
\begin{table}[H]
\small
\begin{center}
	\begin{tabular}{|c|p{4.5in}|}
	\hline
	\textbf{Description} & \textbf{Value} \\ \hline
	Objective & $f(\bm{x}) = (x_1 - 10)^2 + 5(x_2 - 12)^2 + x^4_3 + 3(x_4 - 11)^2 +10x^6_5 + 7x^2_6 + x^4_7 - 4x_6x_7 - 10x_6 - 8x_7$ \\ \hline
	\multirow{4}{*}{Constraints} & $g_1(\bm{x}) = -127 + 2x^2_1 + 3x^4_2 + x_3 + 4x^2_4 + 5x_5 \leq 0$ \\ 
	& $g_2(\bm{x}) = -282 + 7x_1 + 3x_2 + 10x^2_3 + x_4 - x_5 \leq 0$ \\ 
	& $g_3(\bm{x}) = -196 + 23x_1 + x^2_2 + 6x^2_6 - 8x_7 \leq 0$ \\ 
	& $g_4(\bm{x}) = 4x^2_1 + x^2_2 - 3x_1x_2 + 2x^2_3 + 5x_6 - 11x_7 \leq 0$ \\ \hline
	Search space & $-10 \leq x_i \leq 10, i=1,\ldots,7$ \\ \hline
	\end{tabular}
	\label{table:g09-defn}
\end{center}
\end{table}

\subsection*{G12 \cite{Liang2006}}
\begin{table}[H]
\small
\begin{center}
	\begin{tabular}{|c|p{4.0in}|}
	\hline
	\textbf{Description} & \textbf{Value} \\ \hline
	Objective & $f(\bm{x}) = -\left(100 - (x_1 - 5)^2 - (x_2 - 5)^2 - (x_3 - 5)^2\right)/100$ \\ \hline
	Constraints & $g_1(\bm{x}) = (x_1 - p)^2 + (x_2 - q)^2 + (x_3 - r)^2 - 0.0625 \leq 0, ~ p,q,r = 1,\ldots,9$ \\ \hline
	Search space & $0 \leq x_i \leq 9, i=1,2,3$ \\ \hline
	Comments & The feasible region is a set of $9^3$ disjoint balls with radius of 0.25, laid out as a grid. Search space is modified so that $\bm{x}^*$ does not lie in the center. \\ \hline
	\end{tabular}
	\label{table:g12-defn}
\end{center}
\end{table}

\subsection*{G23MOD \cite{Liang2006}}
\begin{table}[H]
\small
\begin{center}
	\begin{tabular}{|c|p{4.0in}|}
	\hline
	\textbf{Description} & \textbf{Value} \\ \hline
	Objective & $f(\bm{x}) = -9x_5 - 15x_8 + 6x_1 + 16x_2 + 10(x_6 + x_7)$ \\ \hline
	\multirow{2}{*}{Constraints} & $g_1(\bm{x}) = x_9x_3 + 0.02x_6 - 0.025x_5 \leq 0$ \\ 
	& $g_2(\bm{x}) = x_9x_4 + 0.02x_7 - 0.015x_8 \leq 0$ \\ \hline
	\multirow{4}{*}{Search space} & $0 \leq x_1, x_2, x_6 \leq 300$ \\
	& $0 \leq x_3, x_5, x_7 \leq 100$ \\
	& $0 \leq x_4, x_8 \leq 200$ \\
	& $0.01 \leq x_9 \leq 0.03$ \\ \hline
	Comments & Taken from G23 as in \cite{Liang2006}, but removed the equality constraints \\ \hline
	\end{tabular}
	\label{table:g23mod-defn}
\end{center}
\end{table}

\subsection*{G24 \cite{Liang2006}}
\begin{table}[H]
\small
\begin{center}
	\begin{tabular}{|c|p{3.0in}|}
	\hline
	\textbf{Description} & \textbf{Value} \\ \hline
	Objective & $f(\bm{x}) = -x_1 - x_2$ \\ \hline
	\multirow{2}{*}{Constraints} & $g_1(\bm{x}) = -2x^4_1 + 8x^3_1 - 8x^2_1 + x_2 - 2 \leq 0$ \\ 
	& $g_2(\bm{x}) = -4x^4_1 + 32x^3_1 - 88x^2_1 + 96x_1 + x_2 - 36 \leq 0$ \\ \hline
	\multirow{2}{*}{Search space} & $0 \leq x_1 \leq 3$ \\
	& $0 \leq x_2 \leq 4$ \\ \hline
	\end{tabular}
	\label{table:g24-defn}
\end{center}
\end{table}

\subsection*{T1 \cite{Ariafar2019,Hernandez-Lobato2015,Hernandez-Lobato2016}}
\begin{table}[H]
\small
\begin{center}
	\begin{tabular}{|c|p{3in}|}
	\hline
	\textbf{Description} & \textbf{Value} \\ \hline
	Objective & $f(\bm{x}) = x_1 + x_2$ \\ \hline
	\multirow{2}{*}{Constraints} & $g_1(\bm{x}) = 0.5 \mathrm{sin}(2\pi(x^2_1 - 2x_2)) + x_1 + 2x_2 - 1.5 \geq 0$ \\ 
	& $g_2(\bm{x}) = -x^2_1 - x^2_2 + 1.5 \geq 0$ \\ \hline
	Search space & $0 \leq x_i \leq 1, i=1,2$ \\ \hline
% 	Comments & ~ \\ \hline
	\end{tabular}
	\label{table:t1-defn}
\end{center}
\end{table}

\subsection*{T2 \cite{Ariafar2019,Gardner2014}}
\begin{table}[H]
\small
\begin{center}
	\begin{tabular}{|c|p{2in}|}
	\hline
	\textbf{Description} & \textbf{Value} \\ \hline
	Objective & $f(\bm{x}) = \mathrm{sin}(x_1) + x_2$ \\ \hline
	Constraints & $g_1(\bm{x}) = \mathrm{sin}(x_1) \mathrm{sin}(x_2) + 0.95 \leq 0$ \\ \hline
	Search space & $0 \leq x_i \leq 6, i=1,2$ \\ \hline
% 	Comments & ~ \\ \hline
	\end{tabular}
	\label{table:t2-defn}
\end{center}
\end{table}

\subsection*{T3 \cite{Ariafar2019,Gardner2014}}
\begin{table}[H]
\small
\begin{center}
	\begin{tabular}{|c|p{3in}|}
	\hline
	\textbf{Description} & \textbf{Value} \\ \hline
	Objective & $f(\bm{x}) = \mathrm{cos}(2x_1) \mathrm{cos}(x_2) + \mathrm{sin}(x_1)$ \\ \hline
	Constraints & $g_1(\bm{x}) = \mathrm{cos}(x_1) \mathrm{cos}(x_2) - \mathrm{sin}(x_1) \mathrm{sin}(x_2) - 0.5 \leq 0$ \\ \hline
	Search space & $0 \leq x_i \leq 6, i=1,2$ \\ \hline
% 	Comments & ~ \\ \hline
	\end{tabular}
	\label{table:t3-defn}
\end{center}
\end{table}

\bibliographystyle{IEEEtran}
\bibliography{smgo-d-paper}

\end{document}